\documentclass[
%%% one of
%submission
final
%proceedings
%%% if you compile a final version for the old OJS platform
% , ojs
%%% if all authors have the same affiliation
% , nomarks
]{dmtcs-episciences}

% DON'T LOAD ANY STYLES THAT CHANGE THE PAGE LAYOUT
% AND DON'T CHANGE THE PAGE LAYOUT BY HAND, EITHER.

\usepackage[utf8]{inputenc}
\usepackage{subfigure}
\usepackage{tikz}
\usepackage{amsthm,amsmath,amssymb,bold-extra}

\usetikzlibrary{decorations.pathreplacing}
% center, x rad, y rad

\newtheorem{thm}{Theorem}[section]
\newtheorem{lma}[thm]{Lemma}
\newtheorem{cor}[thm]{Corollary}

\newtheorem{prop}[thm]{Proposition}

\newtheorem*{adef}{Definition}
\newtheorem*{claim*}{Claim}

\DeclareMathOperator{\grd}{grd}

\numberwithin{equation}{section}

% graphicx is now loaded automatically no need to put this in here anymore.
%
%\usepackage{graphicx}

% We strongly recommend to use natbib. Your colleagues deserve to be
% named in your text. PLEASE, ADAPT YOUR TEXT ACCORDINGLY, such that
% citations are grammatically correct.
\usepackage[square,numbers]{natbib}

\author{Kitty Meeks\affiliationmark{1}\thanks{Supported by a Royal Society of Edinburgh Personal Research Fellowship, funded by the Scottish Government.}
  \and Dominik K. Vu\affiliationmark{2}}
\title{Extremal properties of flood-filling games}
\affiliation{
  School of Computing Science, University of Glasgow\\
  University of Memphis}
\keywords{combinatorial games, flood-filling games, Free-Flood-It}
% don't try to cheat here, we will check the dates!
\received{2018-3-29}
\revised{2019-3-8}
\accepted{2019-7-3}

\begin{document}
\publicationdetails{21}{2019}{4}{12}{4412}
\maketitle
\begin{abstract}
The problem of determining the number of ``flooding operations'' required to make a given coloured graph monochromatic in the one-player combinatorial game Flood-It has been studied extensively from an algorithmic point of view, but basic questions about the maximum number of moves that might be required in the worst case remain unanswered.  We begin a systematic investigation of such questions, with the goal of determining, for a given graph, the maximum number of moves that may be required, taken over all possible colourings.  We give several upper and lower bounds on this quantity for arbitrary graphs and show that all of the bounds are tight for trees; we also investigate how much the upper bounds can be improved if we restrict our attention to graphs with higher edge-density.
\end{abstract}

%%%%%%%%%% SECTION 1: INTRODUCTION %%%%%%%%%%
\section{Introduction}

Flood-It is a one-player combinatorial game, played on a coloured graph.  The goal is to make the entire graph monochromatic (``flood'' the graph) with as few moves as possible, where a move involves picking a vertex $v$ and a colour $d$, and giving all vertices in the same monochromatic component as $v$ colour $d$, as illustrated in Figure \ref{flooding-example}.  Implementations of the game played on regular grids are widely available online \cite{newflash,madvirus} and as popular smartphone apps.  More generally, when played on a planar graph, the game can be regarded as modelling repeated use of the flood-fill tool in Microsoft Paint.

\begin{figure}[h]
\centering
\begin{tikzpicture}[scale=.38]

\draw (0,0) circle (.75); \draw node at (0,0) {1}; \draw (2,0) circle (.75); \draw node at (2,0) {2}; \draw (4,0) circle (.75); \draw node at (4,0) {4}; \draw (0,-2) circle (.75); \draw node at (0,-2) {2}; \draw (2,-2) circle (.75); \draw node at (2,-2) {3}; \draw (0,-4) circle (.75); \draw node at (0,-4) {4}; \draw (2,-4) circle (.75); \draw node at (2,-4) {1}; \draw (4,-4) circle (.75); \draw node at (4,-4) {3}; 
\draw[-] (0.75,0) -- (1.25,0);\draw[-] (2.75,0) -- (3.25,0);
\draw[-] (0.75,-4) -- (1.25,-4);\draw[-] (2.75,-4) -- (3.25,-4);
\draw[-] (0,-0.75) -- (0,-1.25);\draw[-] (0,-2.75) -- (0,-3.25);
\draw[-] (2,-0.75) -- (2,-1.25);\draw[-] (2,-2.75) -- (2,-3.25);
\draw[-] (4,-0.75) -- (4,-3.25);
\draw[-] (0.53,-2.53) -- (1.47,-3.47);\draw[-] (2.53,-1.47) -- (3.47,-0.53);

\draw[->] (5,-2) -- (6.5,-2);

\draw (8,0) circle (.75); \draw node at (8,0) {2}; \draw (10,0) circle (.75); \draw node at (10,0) {2}; \draw (12,0) circle (.75); \draw node at (12,0) {4}; \draw (8,-2) circle (.75); \draw node at (8,-2) {2}; \draw (10,-2) circle (.75); \draw node at (10,-2) {3}; \draw (8,-4) circle (.75); \draw node at (8,-4) {4}; \draw (10,-4) circle (.75); \draw node at (10,-4) {1}; \draw (12,-4) circle (.75); \draw node at (12,-4) {3}; 
\draw[-] (8.75,0) -- (9.25,0);\draw[-] (10.75,0) -- (11.25,0);
\draw[-] (8.75,-4) -- (9.25,-4);\draw[-] (10.75,-4) -- (11.25,-4);
\draw[-] (8,-0.75) -- (8,-1.25);\draw[-] (8,-2.75) -- (8,-3.25);
\draw[-] (10,-0.75) -- (10,-1.25);\draw[-] (10,-2.75) -- (10,-3.25);
\draw[-] (12,-0.75) -- (12,-3.25);
\draw[-] (8.53,-2.53) -- (9.47,-3.47);\draw[-] (10.53,-1.47) -- (11.47,-0.53);

\draw[->] (13,-2) -- (14.5,-2);

\draw (16,0) circle (.75); \draw node at (16,0) {1}; \draw (18,0) circle (.75); \draw node at (18,0) {1}; \draw (20,0) circle (.75); \draw node at (20,0) {4}; \draw (16,-2) circle (.75); \draw node at (16,-2) {1}; \draw (18,-2) circle (.75); \draw node at (18,-2) {3}; \draw (16,-4) circle (.75); \draw node at (16,-4) {4}; \draw (18,-4) circle (.75); \draw node at (18,-4) {1}; \draw (20,-4) circle (.75); \draw node at (20,-4) {3}; 
\draw[-] (16.75,0) -- (17.25,0);\draw[-] (18.75,0) -- (19.25,0);
\draw[-] (16.75,-4) -- (17.25,-4);\draw[-] (18.75,-4) -- (19.25,-4);
\draw[-] (16,-0.75) -- (16,-1.25);\draw[-] (16,-2.75) -- (16,-3.25);
\draw[-] (18,-0.75) -- (18,-1.25);\draw[-] (18,-2.75) -- (18,-3.25);
\draw[-] (20,-0.75) -- (20,-3.25);
\draw[-] (16.53,-2.53) -- (17.47,-3.47);\draw[-] (18.53,-1.47) -- (19.47,-0.53);

\draw[->] (21,-2) -- (22.5,-2);

\draw (24,0) circle (.75); \draw node at (24,0) {4}; \draw (26,0) circle (.75); \draw node at (26,0) {4}; \draw (28,0) circle (.75); \draw node at (28,0) {4}; \draw (24,-2) circle (.75); \draw node at (24,-2) {4}; \draw (26,-2) circle (.75); \draw node at (26,-2) {3}; \draw (24,-4) circle (.75); \draw node at (24,-4) {4}; \draw (26,-4) circle (.75); \draw node at (26,-4) {4}; \draw (28,-4) circle (.75); \draw node at (28,-4) {3}; 
\draw[-] (24.75,0) -- (25.25,0);\draw[-] (26.75,0) -- (27.25,0);
\draw[-] (24.75,-4) -- (25.25,-4);\draw[-] (26.75,-4) -- (27.25,-4);
\draw[-] (24,-0.75) -- (24,-1.25);\draw[-] (24,-2.75) -- (24,-3.25);
\draw[-] (26,-0.75) -- (26,-1.25);\draw[-] (26,-2.75) -- (26,-3.25);
\draw[-] (28,-0.75) -- (28,-3.25);
\draw[-] (24.53,-2.53) -- (25.47,-3.47);\draw[-] (26.53,-1.47) -- (27.47,-0.53);

\draw[->] (29,-2) -- (30.5,-2);

\draw (32,0) circle (.75); \draw node at (32,0) {3}; \draw (34,0) circle (.75); \draw node at (34,0) {3}; \draw (36,0) circle (.75); \draw node at (36,0) {3}; \draw (32,-2) circle (.75); \draw node at (32,-2) {3}; \draw (34,-2) circle (.75); \draw node at (34,-2) {3}; \draw (32,-4) circle (.75); \draw node at (32,-4) {3}; \draw (34,-4) circle (.75); \draw node at (34,-4) {3}; \draw (36,-4) circle (.75); \draw node at (36,-4) {3}; 
\draw[-] (32.75,0) -- (33.25,0);\draw[-] (34.75,0) -- (35.25,0);
\draw[-] (32.75,-4) -- (33.25,-4);\draw[-] (34.75,-4) -- (35.25,-4);
\draw[-] (32,-0.75) -- (32,-1.25);\draw[-] (32,-2.75) -- (32,-3.25);
\draw[-] (34,-0.75) -- (34,-1.25);\draw[-] (34,-2.75) -- (34,-3.25);
\draw[-] (36,-0.75) -- (36,-3.25);
\draw[-] (32.53,-2.53) -- (33.47,-3.47);\draw[-] (34.53,-1.47) -- (35.47,-0.53);

\end{tikzpicture}
\caption{A sequence of moves to flood a graph; the colour of the monochromatic component containing the top left vertex is changed at each move.}\label{flooding-example}
\end{figure}
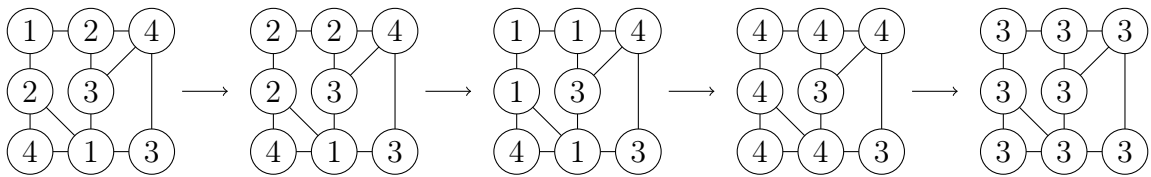

Questions arising from this game (and a two-player variant known as the Honey-Bee Game) have received considerable attention from a complexity-theoretic perspective in recent years \cite{belmonte18, clifford, daSilva18, fellows-flood15, fellows18, fleischer10, fukui, fukui13, kaihon15, lagoutte11, general,2xn,spanning,souza14-graphpowers}, with such work focussing on questions of the form, ``Given a graph $G$ from a specified class, and a colouring $\omega$ of the vertices of $G$, what is the computational complexity of determining the minimum number of moves required to flood $G$?''  The problem is known to be NP-hard in many situations, provided that at least three colours are present in the initial colouring, including in the case that $G$ is an $n \times n$ grid (as in the original version of the game) \cite{clifford} and the case in which $G$ is a tree \cite{fleischer10,lagoutte13} (the parameterised complexity of the problem restricted to trees has also been studied \cite{fellows-flood15}).  On the other hand, there are polynomial-time algorithms to determine the minimum number of moves required if $G$ is a path or a cycle, or more generally for any graph if the initial colouring uses only two colours.  A more complete description of the complexity landscape for these problems can be found in a recent survey \cite{fellows-survey}.

Two different versions of the game have been considered in the literature, known as the ``fixed'' and ``free'' versions.  In the fixed version of the game (as in most implementations), players must always change the colour of the monochromatic component containing a single distinguished \emph{pivot vertex}, so the only choice is what colour to assign to this component; in the free version players can choose freely at each move the component whose colour is changed, in addition to the new colour.  

In this paper, we initiate a systematic investigation of a different type of question about the game, raised by Meeks and Scott \cite{2xn} but as yet unanswered in the literature: given a graph $G$, what is the maximum number of moves we may need to flood $G$, taken over all possible colourings of $G$ with $c$ colours?  In addition to providing a deeper insight into the behaviour of the game, questions of this form are motivated by recent algorithmic work involving Integer Programming formulations for the optimisation problem, whose running-times might be reduced by better bounds on the worst-case number of moves required.

In Section \ref{general-bounds} we obtain a number of straightforward upper and lower bounds on the maximum number of moves that might be required in the worst case in both the fixed and free versions of the game; perhaps surprisingly, we are also able to demonstrate that all of these simple bounds are tight for suitable families of trees.  It follows from previous work \cite{spanning} relating the number of moves required to flood a graph to the number of moves required to flood its spanning trees that any bound on the worst-case number of moves required is tight if and only if it is tight for some family of trees, but we might be able to obtain much better upper bounds on the number of moves required if we know our graph is far from being a tree: intuitively, we expect that a single colouring cannot simultaneously be the worst possible for all spanning trees if the graph has many spanning trees.  In Section \ref{dense-graphs} we investigate this issue, and in the process we determine the worst-case number of moves required to flood a graph that is a blow-up of a long path.  

In the remainder of this section, we introduce some key notation and definitions, and mention some results from the existing literature that are of particular relevance to addressing extremal problems.

\subsection{Notation and definitions}

For any graph $G=(V,E)$, we denote by $|G|$ the number of vertices in $G$ (so $|G| = |V|$).  Throughout this paper, we consider only connected graphs: if $G$ is disconnected then it is impossible to flood $G$ in the fixed version of the game, and in the free version an optimal strategy for the entire graph is obtained by playing optimal strategy in each connected component.  We write $\mathcal{T}(G)$ for the set of spanning trees of $G$.  

For $u,v \in V(G)$, we let $\mathcal{P}(u,v)$ be the set of $u$-$v$ paths in $G$, and the \emph{distance} $d(u,v)$ from $u$ to $v$ is defined to be $\min_{P \in \mathcal{P}(u,v)} |P| - 1$.  The \emph{eccentricity} of a vertex $v \in V(G)$ is $\max_{v \in V(G)} d(u,v)$, and the \emph{radius} of $G$ is the minimum eccentricity of any vertex in $G$, that is $\min_{u \in V(G)} \max_{v \in V(G)} d(u,v)$.  

Let $A \subset V(G)$.  We write $G[A]$ for the subgraph of $G$ induced by $A$, and $N(A)$ for the set of vertices in $V(G) \setminus A$ with at least one neighbour in $A$.

A graph $G = (V_G,E_G)$ is said to be a \emph{blow-up} of a graph $H = (V_H,E_H)$ if $V_G$ can be partitioned into sets $\{V_u : u \in V_H\}$ such that $v_1v_2 \in E_G$ if and only if $v_1 \in V_u$ and $v_2 \in V_w$ with $uw \in E_H$.  

Suppose the game is played on a graph $G$, equipped with an initial colouring $\omega: V \rightarrow C$ (not necessarily a proper colouring); we call $C$ the \emph{colour-set}.  We say that $u$ and $v$ belong to the same \emph{monochromatic component} of $G$ with respect to $\omega$ if there is a path $u = x_1,x_2,\ldots,x_{\ell}=v$ in $G$ such that $\omega(x_1) = \omega(x_2) = \cdots = \omega(x_{\ell})$.  In the fixed version, a move consists of a single colour $d$, and involves assigning colour $d$ to all vertices in the same monochromatic component as the pivot vertex $u$; in the free version, a move $m=(v,d)$ involves assigning colour $d$ to all vertices in the same monochromatic component as $v$.  Given a colouring $\omega$ with colour-set $C$ and any $d \in C$, we denote by $N_d(G,\omega)$ the number of vertices $v \in V$ such that $\omega(v) = d$.

For any graph $G$ with colouring $\omega$, we can obtain a new graph and corresponding colouring by \emph{contracting monochromatic components} of $G$ with respect to $\omega$, that is repeatedly contracting an edge $e = uv$ such that $\omega(u) = \omega(v)$.  If $G'$ (with colouring $\omega '$) is obtained from $G$ (with initial colouring $\omega$) in this way, it is clear that any sequence of moves that floods $G$ with initial colouring $\omega$ will also flood $G'$ with initial colouring $\omega'$, and vice versa (up to possibly changing the vertex at which a move is played to another vertex in the same monochromatic component with respect to $\omega$).  We say that the graph $G_1$ with colouring $\omega_1$ is \emph{equivalent} to the graph $G_2$ with colouring $\omega_2$ if there is a colour-preserving isomorphism (i.e. a bijection between vertices that preserves (non-)adjacency and colours) from the coloured graph obtained by contracting monochromatic components of $G_1$ with respect to $\omega_1$ to that obtained by contracting monochromatic components of $G_2$ with respect to $\omega_2$.

We define $m_G(G,\omega,d)$ to be the minimum number of moves required to give all vertices of $G$ colour $d$ in the free version of the game, and $m_G(G,\omega)$ to be $\min_{d \in C}m_{G}(G,\omega,d)$.  Analogously, for the fixed version of the game we write $m_G^{(v)}(G,\omega,d)$ to be the minimum number of moves required to give all vertices of $G$ colour $d$ when all moves are played at the pivot vertex $v$, and define $m_G^{(v)}(G,\omega)$ to be $\min_{d \in C}m_{G}^{(v)}(G,\omega,d)$.  

Let $\Omega(V,C)$ be the set of all surjective functions from $V$ to $C$.  We then define
$$M_c(G) = \max_{\omega \in \Omega(V,\{1,\ldots,c\})} m_G(G,\omega),$$ 
so $M_c(G)$ is the maximum number of moves that might be required in the free version to flood $G$ in the worst case, taken over all possible initial colourings with $c$ colours. We define $M_c^{(v)}(G)$ analogously for the fixed version.

Let $A$ be any subset of $V$.  We set $m_G(A,\omega,d)$ to be the minimum number of moves we must play in $G$ (with initial colouring $\omega$) in the free version to create a monochromatic component of colour $d$ that contains every vertex in $A$, and $m_G(A,\omega)=\min_{d \in C}m_G(A,\omega,d)$.  When the ground graph and area to be flooded agree, we may henceforth omit the subscript.  

We say a move $m = (v,d)$ is \emph{played in} $A$ if $v \in A$, and that $A$ is \emph{linked} if it is contained in a single monochromatic component.  Subsets $A,B \subseteq V$ are \emph{adjacent} if there exists $ab \in E$ with $a \in A$ and $b \in B$.  We will use the same notation when referring to (the vertex-set of) a subgraph $H$ of $G$ as for a subset $A \subseteq V(G)$.

\subsection{Background results}

One key result which we will exploit throughout this paper gives a characterisation of the number of moves required to flood a graph in terms of the number of moves required to flood its spanning trees. More precisely we will apply the following result by Meeks and Scott~\cite{spanning}.

\begin{thm}
Let $G$ be a connected graph with colouring $\omega$ from colour-set $C$.  Then, for any $d \in C$,
$$m(G,\omega,d) = \min_{T \in \mathcal{T}(G)} m(T,\omega,d).$$
\label{spanning-trees}
\end{thm}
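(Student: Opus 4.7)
The plan is to prove the two inequalities separately.

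For $\min_{T \in \mathcal{T}(G)} m(T,\omega,d) \leq m(G,\omega,d)$, I would construct a spanning tree $T^{\ast}$ explicitly from a minimum flooding sequence $\sigma = m_1, \ldots, m_k$ of $G$ with target $d$. Start with the forest $F_0$ consisting of a chosen spanning tree of each initial monochromatic component of $G$. For each $i$, if move $m_i$ causes the recoloured monochromatic component of $G$ to merge with some adjacent monochromatic components $C_1, \ldots, C_s$, add to the forest $s$ edges of $G$ between these components, chosen to link them acyclically (which is possible since they become $G$-connected in the merged component). A simple induction on $i$ shows that at every step the monochromatic components of the current forest coincide with those of $G$; this keeps the forest acyclic, and since $\sigma$ floods $G$ at step $k$, the result $T^{\ast} = F_k$ is a spanning tree on which the same sequence $\sigma$ floods in $k$ moves.

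For $m(G,\omega,d) \leq \min_{T \in \mathcal{T}(G)} m(T,\omega,d)$, I would fix any spanning tree $T$ of $G$ and any sequence $\tau$ flooding $T$ with colour $d$ in $m(T,\omega,d)$ moves, and argue that playing $\tau$ on $G$ also ends in the all-$d$ colouring. Since $T \subseteq G$, whenever the $T$- and $G$-colourings agree, every monochromatic $T$-component lies inside a monochromatic $G$-component, so any move recolours at least as many vertices on $G$ as on $T$; intuitively, the flooding progresses at least as fast on $G$ as on $T$. I would prove this by induction on $|\tau|$, maintaining an invariant linking the two evolving colourings that is strong enough to force the all-$d$ conclusion on $G$ whenever it holds on $T$.

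The main obstacle is choosing the invariant for the second direction. The obvious ``every vertex coloured $d$ in $T$ is coloured $d$ in $G$'' invariant is not preserved in general, because a move aimed at some $c \neq d$ may flip strictly more vertices in $G$ than in $T$ and thus temporarily destroy $d$-coloured vertices that $T$ still retains. A workable invariant must track the partition of $V$ into $T$- and $G$-monochromatic components jointly with their colours; verifying its preservation requires casework on whether the move targets $d$ and on whether the flipped $G$-component strictly contains its $T$-counterpart.
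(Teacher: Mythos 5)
A preliminary remark: the paper never proves Theorem~\ref{spanning-trees} --- it is imported as a known result from \cite{spanning} --- so there is no in-paper proof to compare against and I assess your proposal on its own terms. Your first direction, $\min_{T \in \mathcal{T}(G)} m(T,\omega,d) \leq m(G,\omega,d)$, is essentially sound: growing a forest whose trees span the current monochromatic components and adding one connecting edge per merge does produce a spanning tree $T^{\ast}$ on which $\sigma$ replays successfully. One detail to make explicit: your induction is phrased for the partial forests $F_i$, but the sequence must be replayed on the final tree $T^{\ast} = F_k$, which contains edges added at later steps. You need the (easy) observation that each such edge joins two distinct monochromatic components at every earlier time --- true because monochromatic components only coarsen as moves are played --- so the ``future'' edges never cause a premature merge and the component evolution on $T^{\ast}$ really does track that on $G$.

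The genuine gap is the second direction, $m(G,\omega,d) \leq m(T,\omega,d)$, and you have in effect conceded it. You correctly diagnose why the obvious invariant (``every $T$-monochromatic component lies in a $G$-monochromatic component of the same colour'') fails: the first time the $G$-component of the played vertex strictly contains its $T$-component, the recoloured $G$-component may swallow other $T$-components that keep their old colour in the $T$-game; from that move on the two games act, at the same vertex, on components of different colours and extents, and it is no longer evident that the sequence terminates with $G$ monochromatic in $d$. Having identified the obstruction, however, you do not overcome it: saying that ``a workable invariant must track the partition of $V$ into $T$- and $G$-monochromatic components jointly with their colours'' and that ``verifying its preservation requires casework'' describes the shape of a proof without exhibiting the invariant or carrying out the casework. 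This inequality is precisely the nontrivial content of the cited result (it is equivalent to Corollary~\ref{subgraph} applied to a spanning tree), and it is where the real work lies in \cite{spanning}. As written, your proposal establishes one inequality and only poses the other.
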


A corollary of this result, proved in the same paper, is that the number of moves required to flood a graph $H$ cannot be increased when moves are played in a larger graph $G$ which contains $H$ as a subgraph.

\begin{cor}
Let $G$ be a connected graph with colouring $\omega$ from colour-set $C$, and $H$ a connected subgraph of $G$.  Then, for any $d \in C$,
$$m_G(V(H),\omega,d) \leq m_H(H,\omega,d).$$
\label{subgraph}
\end{cor}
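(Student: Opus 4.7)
Let $\sigma = (m_1,\dots,m_k)$ be an optimal sequence of moves in $H$ that floods $H$ to colour $d$, so $k = m_H(H,\omega,d)$, and write $\omega^H_i$ and $\omega^G_i$ for the colourings obtained after playing the first $i$ moves of $\sigma$ in $H$ and in $G$ respectively. The plan is to show that playing $\sigma$ directly in $G$ already creates a monochromatic component of colour $d$ containing all of $V(H)$, which immediately gives $m_G(V(H),\omega,d)\le k = m_H(H,\omega,d)$. Since $H$ is connected and sits inside $G$, it is enough to prove that every vertex of $V(H)$ is coloured $d$ under $\omega^G_k$: the edges of $H$ then witness the required connectivity through $d$-coloured vertices of $G$.

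I would then proceed by induction on $i$, maintaining the invariant that every monochromatic component $K$ of $H$ in $\omega^H_i$ of colour $c$ is contained in a monochromatic component of $G$ in $\omega^G_i$ also of colour $c$. The base case $i=0$ is immediate because the colourings agree on $V(H)$ and any same-coloured path in $H$ is also a same-coloured path in $G$. For the inductive step, when $(v_{i+1},d_{i+1})$ is played, the $H$-component $K^H$ of $v_{i+1}$ lies inside its $G$-component $L^G$ (of the same colour, by hypothesis), both are recoloured to $d_{i+1}$, and any adjacent $H$-components of colour $d_{i+1}$ that merge with $K^H$ in $H$ correspond to $G$-components of colour $d_{i+1}$ adjacent to $L^G$ via the same $H$-edges (which are also edges of $G$), so the analogous merge happens in $G$.

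The main obstacle is that this naive invariant can in fact break mid-sequence: two distinct $H$-components of the same colour $c$ may sit inside the same $G$-component (connected in $G$ by a monochromatic path that uses vertices of $V(G)\setminus V(H)$), so a move recolouring only one of them in $H$ will recolour both in $G$, and the $G$-colour of the second diverges from its $H$-colour. To bypass this, I would invoke Theorem~\ref{spanning-trees}: one may replace $H$ by a spanning tree $T_0$ of $H$ realising the minimum, and choose $\sigma$ to be an optimal flooding sequence for $T_0$ as well. Extending $T_0$ to a spanning tree $T^*$ of $G$ reduces the problem to the ``subtree of a tree'' situation, where the unique-path property of trees forces any two $T_0$-vertices that are in the same $T^*$-component of a given colour to be joined by a monochromatic path lying entirely in $T_0$; this rules out the problematic configuration and makes the invariant preservation routine. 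A final application of Theorem~\ref{spanning-trees} in $G$ transfers the conclusion from $T^*$ back to $G$, completing the argument.
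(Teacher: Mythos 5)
The paper itself does not prove Corollary \ref{subgraph}: it is imported from \cite{spanning}, where it falls out of a more general, subset version of Theorem \ref{spanning-trees}, namely that for any $A \subseteq V(G)$ one has $m_G(A,\omega,d) = \min\{m(T,\omega,d) : T \text{ a subtree of } G \text{ with } A \subseteq V(T)\}$; applying this with $A = V(H)$ and restricting the minimum to spanning trees of $H$ gives the corollary immediately. So your proposal must stand on its own, and most of it does: you correctly identify the central obstacle (two same-coloured $H$-components can be linked through $V(G)\setminus V(H)$, so the colourings in $H$ and $G$ diverge), and your tree-convexity step is sound. Since $T_0$ is a connected subtree of the tree $T^*$, the unique $T^*$-path between any two vertices of $T_0$ lies inside $T_0$, so the monochromatic components of $T_0$ are exactly the traces on $V(T_0)$ of monochromatic components of $T^*$; this correspondence, together with the equality of the two colourings on $V(T_0)$, is preserved move by move, and yields $m_{T^*}(V(H),\omega,d) \leq m(T_0,\omega,d) = m_H(H,\omega,d)$.

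The genuine gap is your final sentence. Theorem \ref{spanning-trees} as quoted in this paper only concerns $m(G,\omega,d)$, i.e.\ flooding the \emph{entire} vertex set, and says nothing about the quantity $m_G(A,\omega,d)$ for a proper subset $A$; it therefore cannot ``transfer the conclusion from $T^*$ back to $G$''. What you actually need at that point is the inequality $m_G(V(H),\omega,d) \leq m_{T^*}(V(H),\omega,d)$, where $T^*$ is a spanning tree of $G$ --- but this is itself an instance of the subgraph monotonicity you are trying to prove, and it suffers from exactly the divergence phenomenon you flagged at the outset: a move played in $T^*$ recolours a $T^*$-component, while the same move played in $G$ recolours the generally larger $G$-component, and two $T^*$-components of the same colour may lie in a single $G$-component. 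So the argument is circular at its last step. To close it you would need to prove the subset version of the spanning-tree theorem (or an equivalent ``playing in a supergraph on the same vertex set cannot hurt'' lemma) by a separate induction on the length of the move sequence, which is precisely how the cited source proceeds.
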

We will also use a simple monotonicity result for paths, proved by the same authors in a previous paper~\cite{general}.

\begin{lma}
Let $P$ be a path, with colouring $\omega$ from colour-set $C$, and let $P'$ be a second coloured path with colouring $\omega'$, obtained from $P$ by deleting one vertex and joining its neighbours.  Then, for any $d \in C$, $m(P',\omega',d) \leq m(P,\omega,d)$.  We also have $m(P',\omega') \leq m(P,\omega)$.
\label{basic-monotonicity}
\end{lma}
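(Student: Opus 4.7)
The plan is to prove the first inequality $m(P',\omega',d) \leq m(P,\omega,d)$ by induction on $k := m(P,\omega,d)$; the second statement then follows by minimising over $d$ on both sides. Write $v$ for the deleted vertex with neighbours $u,w$ in $P$, and $\omega' := \omega|_{V(P')}$. The base case $k=0$ is immediate: $\omega \equiv d$ on $P$ forces $\omega' \equiv d$ on $P'$.

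For the inductive step, fix an optimal $P$-sequence with first move $m_1=(x,c)$, let $C$ denote the monochromatic component of $x$ in $(P,\omega)$, and write $\omega_1$ for the colouring of $P$ after $m_1$, so that $m(P,\omega_1,d)=k-1$. I aim to reach, from $\omega'$ in at most one $P'$-move, a colouring $\omega^{\star}$ with $m(P',\omega^{\star},d) \leq k-1$. Two clean cases arise. (i) If $v \in C$, I play $(y,c)$ in $P'$ for any $y \in C \setminus \{v\}$ (or skip the move when $C=\{v\}$): a short check shows the component of $y$ in $(P',\omega')$ is exactly $C\setminus\{v\}$, so the resulting colouring equals $\omega_1|_{V(P')}$. (ii) If $v \notin C$ and the component $C^{\star}$ of $x$ in $(P',\omega')$ equals $C$, playing $(x,c)$ in $P'$ again yields $\omega_1|_{V(P')}$. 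In either case the inductive hypothesis applied to $(P,\omega_1,d)$ with the same vertex $v$ gives $m(P',\omega_1|_{V(P')},d) \leq k-1$, closing these cases.

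The main obstacle is the remaining case (iii): $v \notin C$ but $C^{\star} \supsetneq C$. Inspection of how $P'$ is built from $P$ shows this only happens when, without loss of generality, $u \in C$ and $w$ belongs to a distinct monochromatic component $D$ of $(P,\omega)$ with $\omega(w)=\omega(u)$, so that the new edge $uw$ fuses $C$ and $D$ in $(P',\omega')$; playing $(x,c)$ in $P'$ then recolours all of $C \cup D$ to $c$, and the resulting colouring disagrees with $\omega_1|_{V(P')}$ on $D$. To handle this I would invoke the monochromatic-contraction equivalence defined in the paper's notation section: letting $\omega''$ denote the colouring of $P$ obtained by repainting $v$ to $\omega(u)$, the set $C \cup \{v\} \cup D$ becomes a single monochromatic component of $(P,\omega'')$, and contracting it in $P$ produces the same coloured graph as contracting the merged $C \cup D$ in $P'$. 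Hence $(P,\omega'')$ and $(P',\omega')$ are equivalent and $m(P',\omega',d)=m(P,\omega'',d)$, so it suffices to prove $m(P,\omega'',d) \leq k-1$, i.e.\ that $(v,\omega(u))$ is an optimal first move for $(P,\omega,d)$ in this configuration.

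Establishing this last inequality is the technical heart of the argument. In case (iii) we must have $\omega(u)\neq d$ (otherwise $m_1$ would wastefully recolour a $d$-coloured component), so $\omega(D)\neq d$ and the optimal sequence must at some smallest index $j \geq 2$ recolour a component containing $D$. My approach is an exchange argument on the optimal $P$-sequence: one rearranges the sequence to start with $(v,\omega(u))$, after which the original moves $m_1$ (on $C$) and $m_j$ (on the component containing $D$) can be consolidated into a single move on the now-merged component $C \cup \{v\} \cup D$, producing a $P$-sequence of total length $k$ that begins with $(v,\omega(u))$ and hence yielding $m(P,\omega'',d) \leq k-1$. Verifying carefully that the intervening moves $m_2,\ldots,m_{j-1}$ and the trailing moves $m_{j+1},\ldots,m_k$ remain valid and still flood $P$ after this rearrangement is the main technical hurdle I expect.
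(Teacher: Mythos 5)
First, a remark on scope: the paper does not prove this lemma at all --- it is imported as a background result from Meeks and Scott \cite{general} --- so there is no in-paper proof to compare yours against, and I can only assess your argument on its own terms. Your base case, your cases (i) and (ii), and the reduction of case (iii) to the claim $m(P,\omega'',d)\le k-1$ via the contraction-equivalence of $(P',\omega')$ and $(P,\omega'')$ are all correct. The gap is in your proof of that claim. The assertion that $\omega(u)\neq d$ ``otherwise $m_1$ would wastefully recolour a $d$-coloured component'' is false: take $P=e\,u\,v\,w$ with colours $2,1,2,1$, delete $v$, and target $d=1$. Then $m(P,\omega,1)=2$ and $(u,2),(e,1)$ is an optimal sequence whose first move recolours the $d$-coloured component $C=\{u\}$; this is exactly your case (iii) with $\omega(u)=d$. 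Worse, in this example no move of the sequence ever recolours a component containing $D=\{w\}$ ($w$ keeps colour $1$ throughout and is absorbed by the final move), so your index $j$ does not exist and the consolidation cannot even be set up. Since your argument must handle an \emph{arbitrary} optimal sequence, this subcase cannot simply be waved away.

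Even when $j$ exists, the exchange as described is not yet a proof. Take $P$ with colours $2,1,2,1,3,1$ on vertices $e,u,v,w,b,g$, delete $v$, target $d=2$: then $m(P,\omega,2)=3$, the sequence $(u,2),(b,1),(w,2)$ is optimal, $C=\{u\}$, $D=\{w\}$ and $j=3$. Reading your recipe as ``play $(v,1)$, then the consolidated move, then the remaining moves'' gives $(v,1),(u,2),(b,1)$, which ends at the colouring $2,2,2,2,1,1$ and fails. What works is $(v,1),(b,1),(u,2)$: the consolidated move must be deferred to position $j$ and $m_1$ simply dropped. But then you owe a proof that each intervening move $m_2,\dots,m_{j-1}$ (and each trailing move) still has the intended effect, despite acting on a state in which $C\cup\{v\}\cup D$ carries colour $\omega(u)$ rather than $C$ carrying colour $c$ --- the monochromatic components these moves affect genuinely differ between the two timelines, so this requires an explicit invariant, not just inspection. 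The claim $m(P,\omega'',d)\le k-1$ held in every instance I tested, so your route is plausibly salvageable, but as written the heart of case (iii) --- which you yourself identify as the main hurdle --- rests on a false preliminary assertion and an exchange whose correct formulation and verification are missing.
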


Another useful result, proved in an additional paper by Meeks and Scott~\cite{2xn}, relates the number of moves required to flood the same graph with different initial colourings.

\begin{lma}
Let $G$ be a connected graph, and let $\omega$ and $\omega'$ be two colourings of the vertices of $G$ (from colour-set $C$).  Let $\mathcal{A}$ be the set of all maximal monochromatic components of $G$ with respect to $\omega'$, and for each $A \in \mathcal{A}$ let $c_A$ be the colour of $A$ under $\omega'$.  Then, for any $d \in C$,
$$m(G,\omega,d) \leq m(G,\omega',d) + \sum_{A \in \mathcal{A}} m(A,\omega,c_A).$$
\label{change-colouring}
\end{lma}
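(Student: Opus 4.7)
My plan is induction on $m(G,\omega',d)$. The base case $m(G,\omega',d)=0$ forces $\omega'$ to be monochromatic of colour $d$, so $\mathcal{A}=\{V(G)\}$ with $c_{V(G)}=d$, and the inequality collapses to $m(G,\omega,d)\le m(G,\omega,d)$.

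For the inductive step, I would take the first move $(v,c)$ of an optimal flooding strategy for $(G,\omega',d)$. This move acts on some $A\in\mathcal{A}$ of $\omega'$-colour $c_A$ and produces a colouring $\omega''$ with $m(G,\omega'',d)=m(G,\omega',d)-1$; the maximal monochromatic components of $\omega''$ coincide with those of $\omega'$ except that $A$ has merged with its $\omega'$-neighbours $A'_1,\ldots,A'_l$ of colour $c$ into a single component $B:=A\cup A'_1\cup\cdots\cup A'_l$ of colour $c$. Applying the inductive hypothesis with $\omega''$ in place of $\omega'$, and using that $c_{A'_i}=c$ for each $i$, the lemma reduces to the sub-additivity-type inequality
\[
m(B,\omega,c)\;\le\;1+m(A,\omega,c_A)+\sum_{i=1}^{l}m(A'_i,\omega,c). \qquad(\star)
\]

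To prove $(\star)$ I would build an explicit sequence of at most $1+m(A,\omega,c_A)+\sum_i m(A'_i,\omega,c)$ moves producing a monochromatic $c$-component that contains $B$: run an optimal $m(A'_i,\omega,c)$-move sub-strategy for each $A'_i$ in turn, then an optimal $m(A,\omega,c_A)$-move sub-strategy for $A$, and finally one move on a vertex of $A$ with target colour $c$. The design intent is that $A$ ends up in a monochromatic $c_A$-component adjacent to monochromatic $c$-components containing each $A'_i$, so the last move recolours $A$'s component to $c$ and absorbs the neighbouring $c$-components into a single monochromatic $c$-component covering $B$.

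The main obstacle is interference between the sub-strategies: each is optimal with respect to the original colouring $\omega$, whereas in the concatenation every sub-strategy after the first runs from a modified colouring and its moves may act on unintended monochromatic components. Verifying that $B$ nonetheless lies in a single monochromatic $c$-component at the end of the sequence is the key technical step; I expect this to be handled by tracking how the monochromatic components of $A$ and the $A'_i$ evolve through the sequence (exploiting that monochromatic components can only merge, never split), together with the slack of one move in $(\star)$, which is precisely what lets the final recolouring bridge any gaps introduced by the interleaving.
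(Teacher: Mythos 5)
First, a point of comparison: the paper does not actually prove this lemma --- it is quoted from the earlier Meeks--Scott paper \cite{2xn} --- so there is no in-paper proof to measure your argument against. Judged on its own terms, your induction on $m(G,\omega',d)$ is set up correctly: the base case is fine, and the bookkeeping that reduces the inductive step to the inequality $(\star)$, namely $m(B,\omega,c)\le 1+m(A,\omega,c_A)+\sum_{i} m(A'_i,\omega,c)$, is sound. But $(\star)$ is where essentially all of the difficulty of the lemma lives, and your argument for it is not a proof. The concatenated sequence you describe does not obviously do what you want: each sub-sequence is optimal only for the \emph{original} colouring $\omega$ restricted to its own piece, and by the time it is played the colouring of that piece may already have changed, because a move played at a vertex of $A'_1$ recolours its entire monochromatic component in $G[B]$, which can reach into $A$ and thence into $A'_2$. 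A sub-sequence designed for $\omega|_{A'_2}$, played on a perturbed colouring of $A'_2$, need not leave $A'_2$ linked (for instance, if a boundary vertex of $A'_2$ has been absorbed into a foreign component of some other colour, the planned moves can terminate with that vertex stranded outside the colour-$c$ component they build); the same applies to the sub-sequence for $A$. The single spare move in $(\star)$ only performs the final merge of $A$'s component with adjacent colour-$c$ components; it cannot rescue a vertex of some $A'_i$ that has ended up in a component which is neither $A$'s component nor a colour-$c$ component adjacent to it. ``Components only merge, never split'' is true, but it controls membership of components, not their \emph{colours}, and it is the colours that your argument needs to control.

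This interference problem is precisely the technical content of combination lemmas of this type, and resolving it is not a routine verification. The known proofs (in \cite{spanning,2xn}) route everything through the spanning-tree machinery (Theorem \ref{spanning-trees} and Corollary \ref{subgraph}): one chooses optimal spanning trees of $G[A]$ and of each $G[A'_i]$, joins them by single cut edges into a spanning tree of $G[B]$, and proves the tree version of $(\star)$ by a careful induction on the length of the move sequences, partitioning moves according to which side of a cut edge they act on. Until you supply an argument of this kind, or some other rigorous treatment of the interference, the proposal is a correct reduction of the lemma to an unproved statement of essentially the same depth, rather than a proof.
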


This result means that we do not normally need to worry about the possible effect that moves played to flood a particular subgraph might have elsewhere.

%%%%%%%%%% SECTION 2: UPPER BOUNDS %%%%%%%%%%
\section{General Bounds}
\label{general-bounds}

In this section we obtain some general lower and upper bounds on the value of $M_c(G)$ and $M_c^{(v)}(G)$, where $G$ is an arbitrary connected graph.  While these bounds are based on simple observations about the flooding process, we are able to show that all of them are tight for suitably chosen trees.

\subsection{Lower bounds}

We begin with a simple observation about the minimum number of moves required to flood a graph coloured with $c$ colours.

\begin{prop}
Let $G$ be any graph with colouring $\omega$, where $\omega$ uses exactly $c$ colours on $G$.  Then
$$m(G,\omega) \geq c-1.$$
Moreover, if every colour appears in at least two distinct monochromatic components with respect to $\omega$, then
$$m(G,\omega) \geq c.$$
\label{c-col}
\end{prop}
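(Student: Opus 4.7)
My plan is to use a simple potential-function argument counting the number of distinct colours that currently appear in the graph. For a colouring $\omega'$ let $\phi(\omega')$ denote the number of distinct colours appearing on $V(G)$ under $\omega'$. I first observe that a single move $(v,d')$ recolours exactly one monochromatic component (of some colour $c'$) to $d'$, so the set of colours present can change in only two ways: $d'$ may be newly introduced, and $c'$ may vanish. In particular $\phi$ decreases by at most $1$ per move.

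For the first inequality I note that $\phi$ starts at $c$ (by hypothesis $\omega$ uses exactly $c$ colours) and ends at $1$ (the graph is monochromatic). Since each move decreases $\phi$ by at most $1$, at least $c-1$ moves are needed, giving $m(G,\omega)\ge c-1$.

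For the stronger bound under the extra hypothesis, I consider the very first move played in any optimal flooding sequence. That move is played in some monochromatic component of colour $c'$; by hypothesis, $c'$ appears in at least two distinct monochromatic components under $\omega$, so at least one component of colour $c'$ remains after this first move. Thus $c'$ is not eliminated, and since no other colour can be eliminated by a single move, $\phi$ does not decrease on the first move. After the first move we therefore still have $\phi \ge c$, and the general bound from the previous paragraph applied to the resulting coloured graph shows that at least $c-1$ further moves are required to bring $\phi$ down to $1$. Adding the first move gives $m(G,\omega)\ge c$.

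I do not anticipate any substantive obstacle: the only subtlety is checking that a single move cannot eliminate more than one colour from the graph, which is immediate because exactly one monochromatic component is affected and only that component's original colour can disappear. Everything else is bookkeeping.
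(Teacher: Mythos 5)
Your proof is correct and is essentially identical to the paper's: both arguments rest on the observation that a single move can eliminate at most one colour (namely the original colour of the recoloured component), giving the $c-1$ bound, and then note that under the extra hypothesis the first move cannot eliminate any colour at all. No further comment is needed.
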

\begin{proof}
To see that the first statement is true, note that any move can reduce the number of colours present in the graph by at most one, and so, in order to reduce the total number of colours present from $c$ to $1$, at least $c-1$ moves are required.  For the second part of the result, observe that the first move played can only change the colour of one monochromatic component under the initial colouring, and so if every colour initially appears in at least two distinct monochromatic components then the first move cannot reduce the total number of colours present on the graph; thus a total of at least $c$ moves will be required to reduce the number of colours in the graph to $1$.
\end{proof}

This gives an immediate lower bound on $M_c(G)$.  A vertex $v$ is said to be a \emph{dominating vertex} if every other vertex is adjacent to $v$.

\begin{cor}
For any connected graph $G$, $M_c(G) \geq c-1$.  If $G$ has no dominating vertex, then $M_c(G) \geq c$.
\end{cor}

The same reasoning allows us to make a slightly stronger statement in the fixed case.

\begin{prop}
For any connected graph $G = (V,E)$ and $v \in V$, $M^{(v)}_c(G) \geq c-1$.  If $v$ is not a dominating vertex, then $M^{(v)}_c(G) \geq c$.
\label{c-col-fixed}
\end{prop}

To see that this first pair of bounds is tight, consider the complete bipartite graph $K_{1,n}$ with any colouring using $c$ colours: we can always flood this graph by playing changing the colour of $v$, the unique non-leaf vertex, $c-1$ times.  Now suppose that $u$ is a leaf in $K_{1,n}$; we obtain a new graph $G$ by adding an additional vertex $x$ adjacent only to $u$.  Now, for any colouring $\omega$ of $G$ with $c$ colours such that $\omega(x) \neq \omega(u)$ and $\omega(x) = \omega(v)$, we can flood $G$ by changing the colour of $v$ precisely $c$ times.

In order for the lower bounds we have obtained thus far to be tight, we required that some monochromatic component is adjacent to many others.  We formalise this observation with a second general lower bound based on the structure maximum number of vertices adjacent to any connected subgraph.  Note that this maximum number of neighbours will be bounded by a constant when the graph in question is obtained from some fixed graph by subdividing edges.

\begin{prop}
Let $G=(V,E)$ be a connected graph on $n \geq 1$ vertices, and let $\omega$ be a proper colouring of $G$.  Suppose that, for every set $A \subset V$ such that $G[A]$ is connected, we have $|N(A)| \leq s$.  Then 
$$m(G,\omega) \geq \frac{1}{s}(n - 1).$$
\label{edges-bound}
\end{prop}
\begin{proof}
We proceed by induction on $m(G,\omega)$; the result is trivially true for $m(G,\omega) = 0$, so assume that $m(G,\omega) > 0$.  Let $S$ be a sequence which floods $G$ with some colour $d$, where $|S| = m(G,\omega)$; suppose that the first move of $S$ is $\alpha = (v,d')$, and let $\omega'$ denote the colouring of $G$ obtained by playing $\alpha$ from the initial colouring $\omega$.  Let $A$ denote the monochromatic component of $G$ with respect to $\omega'$ that contains $v$.  It is clear that every vertex of $A$ other than $v$ must belong to some monochromatic component with respect to $\omega$ that is adjacent to $A$; since all monochromatic components with respect to $\omega$ are singletons (as $\omega$ is a proper colouring) it follows that $A \subset v \cup N(\{v\})$.  Note that $\{v\}$ trivially induces a connected subgraph of $G$, so by assumption we have $|N(\{v\})| \leq s$, implying that $|A| \leq 1 + s$.

Now consider the graph $G'$ obtained from $G$ by contracting $A$ to a single vertex; let $\omega''$ be the colouring of $G'$ derived from $\omega'$, and note that $\omega''$ is a proper colouring of $G'$.  It is also clear that contracting edges cannot create a set $B$ such that $G[B]$ is connected and $|N(B)| > s$, so we may apply the inductive hypothesis to see that
$$m(G',\omega'') \geq \frac{1}{s}(n - |A|) \geq \frac{1}{s}(n - s - 1) = \frac{1}{s}(n-1) - 1.$$
Thus we can conclude that $|S| \geq 1 + \frac{1}{s}(n-1) - 1 = \frac{1}{s}(n-1)$, as required.
\end{proof}

Again, the same argument can be applied to make a slightly stronger statement in the fixed case.

\begin{prop}
Let $G=(V,E)$ be a connected graph on $n \geq 1$ vertices, fix $v \in V$, and let $\omega$ be a proper colouring of $G$.  Suppose that, for every set $A \subset V$ such that $v \in A$ and $G[A]$ is connected, we have $|N(A)| \leq s$.  Then 
$$m^{(v)}(G,\omega) \geq \frac{1}{s}(n - 1).$$
\label{edges-bound-fixed}
\end{prop}

We can immediately deduce lower bounds on $M_c(G)$ and $M^{(v)}_c(G)$.

\begin{cor}
Let $G = (V,E)$ be a connected graph and suppose that, for every set $A \subset V$ such that $G[A]$ is connected, we have $|N(A)| \leq s$.  Then 
$$M_c(G) \geq \frac{1}{s}(n-1).$$
If, for some $v \in V$, we have $|N(A)| \leq s$ for all $A \subset V$ such that $v \in A$ and $G[A]$ is connected, then 
$$M^{(v)}_c(G) \geq \frac{1}{s}(n-1).$$
\end{cor}

To see that these bounds are tight, consider a path on $2t + 1$ vertices for some $t \in \mathbb{N}$, whose vertices are coloured alternately with colours $1$ and $2$.  Here we have $s = 2$, and we can flood the path by changing colour of the midpoint exactly $t$ times.

\subsection{Upper bounds}

We start with a simple bound based on the initial colouring of $G$.

\begin{prop}
Let $G$ be any connected graph with colouring $\omega$ from colour-set $C$.  Then, for any $d \in C$,
$$m(G,\omega,d) \leq n - N_d(G,\omega).$$
\label{max-colour-class}
\end{prop}
\begin{proof}
This result follows immediately from the fact that, provided at least one vertex does not yet have colour $d$, it is always possible to play a move which increases the number of vertices having colour $d$ by at least one: changing the colour of a vertex that does not already have colour $d$ to $d$ may additionally give some other vertices colour $d$, but all vertices that previously had colour $d$ will be unchanged.
\end{proof}

This result can also be reformulated in terms of the number of colours used in the initial colouring.

\begin{prop}
Let $G$ be any connected graph.  Then
$$M_c(G) \leq n - \left \lceil \frac{n}{c} \right \rceil.$$
\label{colour-bound}
\end{prop}
\begin{proof}
Let $\omega$ be any colouring of $G$ with $c$ colours.  There must then be at least one colour $d$ such that $N_d(G,\omega) \geq \left \lceil \frac{n}{c} \right \rceil$, implying by Proposition \ref{max-colour-class} that $m(G,\omega,d) \leq n - \left \lceil \frac{n}{c} \right \rceil$.  
\end{proof}

In the fixed version of the game, we do not necessarily benefit from having many vertices coloured with the same colour initially.  Consider a path on $n$ vertices, whose vertices are coloured alternately with colours $1$ and $2$, and fix the pivot to be one of the endpoints.  By Proposition \ref{edges-bound-fixed}, we can see that we will require at least $n-1$ moves to flood the graph, even though half the vertices already have the same colour.

Returning our attention to the free version, we can show that the two simple upper bounds above are in fact tight for suitably coloured paths; we start by defining a useful family of colourings for paths.

\begin{adef} 
Let $P = v_1 \ldots v_n$ be a path with edge-set $E = \{v_iv_{i+1}: 1 \leq i \leq n-1\}$, $C = \{d_0,\ldots,d_{c-1}\}$ a set of colours, and $\omega : V(P) \rightarrow C$ a proper colouring of $P$.  The colouring $\omega$ is said to be a  $C$-\emph{rainbow} colouring of $P$ if there exists a permutation $\pi: \{0,\ldots,c-1\} \rightarrow \{0,\ldots,c-1\}$ such that, for $1 \leq i \leq n$, $\omega(v_i) = d_{\pi(i \mod c)}$.
\end{adef}

Note that, up to relabelling of the colours, a $C$-rainbow coloured path must be as illustrated in Figure \ref{rainbow-path}.  We say that a colouring $\omega$ of $P$ is a \emph{rainbow colouring} if it is a $C$-rainbow colouring for some colour-set $C$.

\begin{figure}[h]
\centering
\begin{tikzpicture}[scale=.5]

\draw (0,0) circle (.75); \draw node at (0,0) {$d_0$}; \draw (2,0) circle (.75); \draw node at (2,0) {$d_1$}; \draw (4,0) circle (.75); \draw node at (4,0) {$d_2$}; \draw (8,0) circle (.75); \draw node at (8,0) {$d_{c-1}$}; \draw (10,0) circle (.75); \draw node at (10,0) {$d_0$}; \draw (18,0) circle (.75); \draw node at (18,0) {$d_{c-1}$}; \draw (20,0) circle (.75); \draw node at (20,0) {$d_0$};  \draw[-] (0.75,0) -- (1.25,0);\draw[-] (2.75,0) -- (3.25,0);\draw[-] (8.75,0) -- (9.25,0);\draw[-] (18.75,0) -- (19.25,0); \draw[dotted] (4.75,0)--(7.25,0); \draw[dotted] (10.75,0) -- (17.25,0); \draw[dotted] (20.75,0) -- (22.25,0); \draw (23,0) circle(.75); \draw node at (23,0) {$d_i$};
\end{tikzpicture}
\caption{A $C$-rainbow colouring of a path}\label{rainbow-path}
\end{figure}
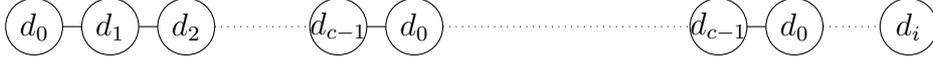

We now demonstrate that any rainbow colouring of a path attains the upper bound from Proposition \ref{max-colour-class}. 

\begin{lma}
Let $P$ be a path on $n$ vertices, and $\omega$ a rainbow colouring of $P$.  Then, for any colour $d$,
$$m(P,\omega,d) \geq n - N_d(P,\omega).$$
\label{rainbow-target}
\end{lma}
\begin{proof}
We proceed by induction on $m(P,\omega,d)$.  For the base case, suppose that $m(P,\omega,d) = 0$, which is only possible if the path is already monochromatic with colour $d$; thus $N_d(P,\omega) = n$ and so $n - N_d(P,\omega) = 0 \leq m(P,\omega,d)$, as required.

Now suppose that $m(P,\omega,d) > 0$ and that the result holds for any path $P'$, colouring $\omega'$ and colour $d'$ such that $m(P',\omega',d') < m(P,\omega,d)$.  Let $S$ be an optimal sequence of moves to flood $P$ with colour $d$ (starting from the initial colouring $\omega$), and let $\alpha$ be the final move of $S$.  There are now two cases to consider, depending on whether or not $P$ is monochromatic immediately before $\alpha$ is played.

Suppose first that $P$ is monochromatic in some colour $d' \neq d$ immediately before $\alpha$ is played.  In this case we know that $m(P,\omega,d') \leq m(P,\omega,d) - 1$ and so we may apply the inductive hypothesis to see that 
$$m(P,\omega,d') \geq n - N_{d'}(P,\omega).$$
Note that the number of vertices with colours $d$ and $d'$ in a rainbow colouring can differ by at most one, so we see that
\begin{align*}
m(P,\omega,d) & \geq m(P,\omega,d') + 1 \\
                & \geq n - N_{d'}(P,\omega) + 1 \\
                & \geq n - (N_d(P,\omega) + 1) + 1 \\
                & = n - N_d(P,\omega),
\end{align*}
as required.

Now suppose that $P$ is not monochromatic immediately before $\alpha$ is played.  In this case, before the final move, there must be either two or three monochromatic segments; we denote these segments $P_1,\ldots,P_\ell$ (where $\ell \in \{2,3\}$), and without loss of generality we may assume that $P_2$ does not have colour $d$ before $\alpha$ is played.  For each $i \in \{1,\ldots,\ell\}$, let $S_i$ be the subsequence of $S \setminus \alpha$ consisting of moves played in a monochromatic component that intersects $P_i$; note that these subsequences partition $S \setminus \alpha$.  Moreover, observe that $S_i$, played in $P_i$, must make $P_i$ monochromatic; for each $i \neq 2$ the sequence $S_i$ must flood $P_i$ with colour $d$, while $S_2$ must flood $P_2$ with some colour $d' \neq d$.  Thus we see that, for $i \neq 2$, 
$$m(P_i,\omega,d) \leq |S_i|,$$
and also
$$m(P_2,\omega,d') \leq |S_2|.$$
Observe further that $\omega$ is a rainbow colouring of $P_i$ so, as $|S_i| \leq |S \setminus \alpha| < m(P,\omega,d)$ for all $i$, we can then apply the inductive hypothesis to see that, for $i \neq 2$,
$$|S_i| \geq m(P_i,\omega,d) \geq |P_i| - N_d(P_i,\omega),$$
and that
$$|S_2| \geq m(P_2,\omega,d') \geq |P_2| - N_{d'}(P_2,\omega) \geq |P_2| - N_d(P_2,\omega) - 1$$
as the number of vertices that initially have colours $d$ and $d'$ can differ by at most one.  

This then implies that
\begin{align*}
m(P,\omega,d) & = |S| \\
	          & = 1 + \sum_{i = 1}^\ell |S_i| \\
	          & = 1 + |S_2| + \sum_{i \neq 2} |S_i| \\
	          & \geq 1 + |P_2| - N_d(P_2,\omega) - 1 + \sum_{i \neq 2} (|P_i| - N_d(P_i,\omega)) \\
	          & = |P| - N_d(P,\omega),
\end{align*}
as required.
\end{proof}

As the number of occurrences of each colour in a rainbow colouring of an $n$-vertex path with $c$ colours is either $\lfloor \frac{n}{c} \rfloor$ or $\left \lceil \frac{n}{c} \right\rceil$, we see that Proposition \ref{colour-bound} is also tight.

\begin{cor}
Let $P_t$ denote the path on $t$ vertices, and assume that $t \geq c$.  Then 
$$M_c(P_t) = t - \left \lceil \frac{t}{c} \right \rceil.$$
Moreover, this maximum possible number of moves is obtained with a rainbow colouring.
\label{path-result}
\end{cor}

Since the tightness of these bounds is demonstrated by means of a long path, it is natural to ask how much better we can do if we exclude the presence of such paths; in the next result we show that we can bound the number of moves required in the fixed version in terms of only the total number of colours and the length of the longest induced path starting at the pivot vertex.

\begin{prop}
Let $G = (V,E)$ be a connected graph, fix $v \in V$, and suppose that $v$ has eccentricity $r$,  Then
$$M_c^{(v)}(G) \leq (c-1)r.$$
\label{eccentricity-bound}
\end{prop}
\begin{proof}
For $1 \leq i \leq r$, let $V_i = \{u \in V: d(u,v) = i\}$, and note that $V = \{v\} \cup \bigcup_{1 \leq i \leq r} V_i$.  Now fix any colouring $\omega \in \Omega(V,\{1,\ldots,c\})$.  We will argue, by induction on $r$, that there is a sequence of moves played at $v$ which will flood the graph.  The base case, for $r=0$, is trivial, so we will assume that $r > 0$ and that the result holds for all graphs with radius smaller than $r$.

Let $C_1$ be the set of colours, other than $\omega(v)$, that occur at vertices of $V_1$ under $\omega$; note that $|C_1| \leq c-1$.  Then, cycling $v$ through all colours in $C_1$ will create a monochromatic component containing (at least) all of $\{v\} \cup V_1$; we will denote the new colouring of $G$ resulting from these moves by $\omega'$.  Note that, in the graph obtained from $G$ by contracting monochromatic components with respect to $\omega'$, the pivot vertex has eccentricity at most $r-1$, so by the inductive hypothesis we see that at most $(c-1)(r-1)$ further moves are required to flood $G$.  Hence the total number of moves required to flood $G$ is at most $(c-1) + (c-1)(r-1) = (c-1)r$, as required.
\end{proof}

In the free version, we can choose to play all moves at a vertex whose eccentricity is equal to the radius of the graph; this gives the following immediate corollary.

\begin{cor}
Let $G=(V,E)$ be a connected graph with radius $r$.  Then
$$M_c(G) \leq (c-1)r.$$
\label{radius-bound}
\end{cor}

We now argue that both of these bounds are tight.  To do this, we will make use of a specific family of trees: we define $T_{a,b}$ to be the tree obtained from the star $K_{1,a}$ by subdividing every edge exactly $b-1$ times (so $T_{a,b}$ is composed of $a$ paths on $b+1$ vertices, all having a common first vertex).  We begin with the fixed case.

\begin{lma}
Let $v$ be the unique vertex of $T_{(c-1)^r,r}$ with degree $(c-1)^r$.  Then
$$M_c^{(v)}(G) \geq (c-1)r.$$
\label{tight-trees-fixed}
\end{lma}
\begin{proof}
We define a $c$-colouring $\omega$ of $V(T_{(c-1)^r,r})$ such that $m^{(v)}(T_{(c-1)^r,r},\omega) \geq (c-1)r$.  We first set $\omega(v) = 1$.  Now let $\mathcal{S}_{c,r}$ be the set of all sequences of elements from $\{1,\ldots,c\}$ of length $r$ with the following properties:
\begin{enumerate}
\item the first element of the sequence is not 1, and
\item no two consecutive elements of the sequence are the same.
\end{enumerate}
Note that this definition implies that $|\mathcal{S}_{c,r}| = (c-1)^r$.  We will set our colouring $\omega$ to colour one of the paths in our tree with each $\sigma \in \mathcal{S}_{c,r}$: to colour a path with $\sigma$, we give the vertex adjacent to $v$ the colour that is the first element of $\sigma$, the next vertex the colour that is the second element, and so on.  Note that the conditions on elements of $\mathcal{S}_{c,r}$ ensure that this colouring $\omega$ is a proper colouring of $T_{c,r}$.

Let $S$ be any sequence of moves played at $v$ which floods $G$.  Note that there must be a colour $c_1$, other than $1$, that is none of the first $c-2$ moves of $S$.  We now define $c_i$ inductively: set $S_{i-1}$ to be the shortest initial segment of $S$ that is a supersequence of $c_1,\ldots,c_{i-1}$, and choose $c_i$ to be a colour, not equal to $c_{i-1}$, that does not appear in the first $c-2$ moves of the sequence $S$ after $S_{i-1}$ has been removed.  Observe that $c_1,\ldots,c_r$ is an element of $\mathcal{S}_{c,r}$, so there is some path in $T_{c,r}$ whose vertices (starting from the vertex adjacent to $v$) are coloured, in order, $c_1,\ldots,c_r$.  In order to flood this path, we must play $S_r$; but by construction, $|S_r| \geq (c-1)r$, so we must have $|S| \geq (c-1)r$, as claimed.
\end{proof}

We now generalise this argument to the free version; we use an almost identical construction, but with even more paths incident with the central vertex.

\begin{lma}
Let $T_{r(c-1)^{r+1},r}$ denote the tree obtained from $K_{1,r(c-1)^{r+1}}$ by subdividing each edge $r-1$ times.  Then
$$M_c(T_{r(c-1)^{r+1},r}) \geq (c-1)r.$$
\label{tight-trees}
\end{lma}
\begin{proof}
Once again, we set $v$ to be the vertex of $T_{r(c-1)^{r+1},r}$ with degree $r(c-1)^{r+1}$.  We define a colouring $\omega$ of $V(T_{r(c-1)^{r+1},r})$ by setting $\omega(v) =1$ and, for each $\sigma \in \mathcal{S}_{c,r}$, we colour $r(c-1)$ of the paths in our tree with $\sigma$.

If all moves are played at $v$, we can use exactly the same reasoning as in the proof of Lemma \ref{tight-trees-fixed}, so it remains only to argue that no sequence of fewer than $r(c-1)$ moves in which not all moves are played at $v$ can flood the tree.  Suppose that some number $\alpha$ of the moves are not played in monochromatic components containing $v$, with $1 \leq \alpha < (c-1)r$.  Note that any such move cannot change the colour of any vertex outside the path in which it is played.  Thus, even if $\alpha = (c-1)r - 1$, there must still be at least one path with each colouring from $\mathcal{S}_{c,r}$ whose colouring is not changed by any of these moves that is not played at $v$; at least $(c-1)r$ moves played at $v$ will be required to flood these remaining paths.
\end{proof}

\section{Graphs with high edge density}
\label{dense-graphs}

While we have seen that the upper bounds on $M_c(G)$ that we derived in the previous section are tight, it is natural to ask to whether they are only tight for graphs with few edges: intuitively, increasing the number of edges in the graph should make it easier to flood the graph.

One simple question we might ask is as follows: given $\delta \in (0,1)$, are the upper bounds in Propositions \ref{eccentricity-bound} and \ref{radius-bound} tight for any graph $G = (V,E)$ such that $|E|/|V|^2 \geq \delta$?  The answer to this question is yes: we can obtain a graph with arbitrarily high edge-density by adding a large clique to the constructions used in Lemmas \ref{tight-trees-fixed} and \ref{tight-trees}, every vertex of which is adjacent to the central vertex of the tree.  The addition of this clique does not change the radius of the graph, and no matter how we colour the clique the same number of moves will still be required to flood the whole graph.

However, this construction seems somewhat artificial: the value of $M_c(G)$ is determined by some small part of the graph whose edge-density remains small.  A more meaningful line of investigation is therefore to ask whether adding many edges (but no new vertices) to these constructions, in such a way that the radius remains unchanged, will significantly reduce $M_c(G)$.  

In this section, we provide a partial answer to this question.  If $v$ denotes the central vertex of some tree $T_{a,b}$, we define $V_i = \{u \in V(T_{a,b}): d(v,u) = i - 1\}$ for $1 \leq i \leq b+1$.  Then the set of edges we can add without changing the number of vertices in each $V_i$ is precisely
$$\{uw: u \in V_i, w \in V_j, |i-j| \leq 1\}.$$
Notice that the resulting graph is obtained from a blow-up of a path on $b+1$ vertices by making each vertex class (which is an independent set in the blow-up) into a clique.  Since adding edges cannot increase the number of moves required (by Corollary \ref{subgraph}), the following result tells us that $b + c - \frac{b-1}{c} - 2$ moves suffice to flood this graph in the free version.

\begin{prop}
Let $G = (V,E)$ be a blow-up of the path $P_t$ on $t$ vertices, and let $\omega: V(G) \rightarrow C$ be a colouring of $G$.  Suppose that $Q$ is a subpath of $G$ such that every vertex of $G$ has a neighbour on $Q$.  Then
$$m(G,\omega) \leq m(Q,\omega) + (c-1),$$
and in particular
$$m(G,\omega) \leq t-2 - \left \lceil \frac{t-2}{c} \right \rceil + (c-1).$$
\label{dominating-path}
\end{prop}
\begin{proof}
Note that, by Corollary \ref{subgraph}, $m_G(Q,\omega) \leq m_Q(Q,\omega)$, so it is possible to play at most $m_Q(Q,\omega)$ moves in $G$ to create a monochromatic component $A$ of some colour $d \in C$, where $A$ contains all of $Q$; by our assumptions on $Q$, every vertex in $G$ either belongs to $A$ or has a neighbour in $A$.  We can therefore flood the remainder of $G$ with at most $c-1$ further moves, changing the colour of $A$ repeatedly to give it every colour in $C \setminus \{d\}$.  This implies that
$$m(G,\omega) \leq m(Q,\omega) + (c-1),$$
as required.  The second part of the result then follows immediately from Proposition \ref{colour-bound}, together with the observation that a path containing precisely one vertex from every class except for the two end classes has the required property.
\end{proof}

The same argument gives an analogous result for the fixed version.

\begin{prop}
Let $G = (V,E)$ be a blow-up of the path $P_t$ on $t$ vertices, fix $v \in V$, and let $\omega:V(G) \rightarrow C$ be a colouring of $G$.  If $v$ belongs to one of the end classes of $G$, then
$$m^{(v)}(G,\omega) \leq (t-2) + (c-1) = t + c - 3;$$
if $v$ does not belong to either end class, then
$$m^{(v)}(G,\omega) \leq (t-3) + (c-1) = t + c - 4.$$
\label{dominating-path-fixed}
\end{prop}

The main result of this section is that, provided $b$ is sufficiently large compared with the number of colours, we can in fact improve on the simple bound of Proposition \ref{dominating-path} in the free version: the number of moves required to flood a blow-up of a long path is in fact the same as the number required, in the worst case, to flood a path of the same length.

\begin{thm}
Let $G$ be a blow-up of a path on $t$ vertices, and suppose that $c \geq 3$ and $t \geq 2c^{10}$.  Then 
$$M_c(G) = t - \left \lceil \frac{t}{c} \right \rceil.$$
\label{blowup-thm}
\end{thm}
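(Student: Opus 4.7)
The plan is to prove the equality by establishing matching lower and upper bounds separately.

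For the lower bound $M_c(G) \geq t - \lceil t/c \rceil$, I would generalise the rainbow constructions from Sections~\ref{paths} and~\ref{cycles}. Assign each blob $V_{u_i}$ a single colour, chosen so that the sequence of blob-colours is a shifted $C$-rainbow colouring of the underlying $t$-vertex path or cycle (for the cycle case the shift is picked so that the rainbow pattern remains proper at the wrap-around, which is easy since $t \geq 2c^{10} \gg c$). Because adjacent blobs then receive different colours and $G$ has no intra-blob edges, each blob is exactly one monochromatic component of $G$; contracting these components shows that $(G,\omega)$ is equivalent to a rainbow-coloured path (respectively cycle) on $t$ vertices, and Corollary~\ref{path-lb} or Lemma~\ref{cycle-lb} immediately yields $m(G,\omega) \geq t - \lceil t/c \rceil$.

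For the upper bound $M_c(G) \leq t - \lceil t/c \rceil$, the main difficulty is that applying Theorem~\ref{colour-bound} directly to the $n$-vertex graph $G$ only yields $n - \lceil n/c \rceil$, which can be arbitrarily larger than $t - \lceil t/c \rceil$ once the blobs are big. Since removing the wrap-around edges of a cycle blow-up leaves a spanning subgraph that is a path blow-up, Corollary~\ref{subgraph} lets me reduce to the path case, so I may assume $G$ is a path blow-up. My plan is then to exploit Theorem~\ref{spanning-trees}: it suffices to exhibit, for each colouring $\omega$ of $G$, a spanning tree $T$ whose contraction by monochromatic components is equivalent to a coloured path on at most $t$ vertices, since Theorem~\ref{colour-bound} applied to that path gives at most $t - \lceil t/c \rceil$ moves (using that $x \mapsto x - \lceil x/c \rceil$ is non-decreasing). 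The natural choice is a caterpillar whose backbone $r_1 r_2 \cdots r_t$ contains one representative $r_i \in V_{u_i}$ per blob, with every non-representative vertex $v \in V_{u_i} \setminus \{r_i\}$ attached as a leaf to whichever of $r_{i-1}, r_{i+1}$ shares its colour; provided $\{\omega(r_{i-1}), \omega(r_{i+1})\}$ covers every colour of $V_{u_i}$ other than $\omega(r_i)$, each leaf is absorbed into its parent under contraction and $T$ collapses to a path on at most $t$ backbone vertices.

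The main obstacle is choosing the representatives so that this absorption condition holds at every blob, with the further complication that $\omega(r_i)$ must simultaneously serve as a neighbour-colour for both $V_{u_{i-1}}$ and $V_{u_{i+1}}$, and that a blob can in principle carry all $c$ colours, in which case the absorption condition cannot be met locally. Here is where the hypothesis $t \geq 2c^{10}$ enters: since only $2^c$ blob colour-profiles are possible, a pigeonhole argument locates long consecutive runs of blobs sharing a common profile, along which one can cycle through a fixed rainbow pattern of representative colours that automatically satisfies the absorption condition. The few transition regions between runs, together with the ``exceptional'' blobs whose colour profile cannot be absorbed this way, are then handled via Lemma~\ref{change-colouring}: one modifies $\omega$ on these short stretches to fit the rainbow scheme and absorbs the resulting correction term into the slack of the main bound, of which the generous polynomial gap $t \geq 2c^{10}$ leaves plenty. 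Making these corrections precise, and verifying that the representative schemes in adjacent runs can be stitched across the boundaries without accumulating too many extra moves, will be the hardest technical part.
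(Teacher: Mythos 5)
Your lower bound and the reduction from cycle blow-ups to path blow-ups both match the paper's (Lemma \ref{blowup-lb} and Corollary \ref{cycle-blowup-ub}) and are fine, but your upper bound for path blow-ups --- which is where essentially all of the work lies (Lemma \ref{path-blowup-ub}, proved over Sections \ref{rainbow-col}--\ref{all-col}) --- rests on a plan that cannot work. You propose to exhibit a spanning caterpillar whose contraction by monochromatic components is a path on at most $t$ vertices and then invoke Theorem \ref{colour-bound}. This fails already in the most basic case: a rainbow \emph{path} colouring of $G$ (each $V_i$ monochromatic in colour $i \bmod c$, every $|V_i| \geq 2$). Under that colouring every monochromatic component of $G$ is a singleton, since vertices inside a blob are pairwise non-adjacent and all their neighbours lie in adjacent blobs of a different colour; hence \emph{every} spanning tree of $G$ contracts to itself, a tree on $n \geq 2t$ vertices, and no spanning tree contracting to a path on at most $t$ vertices exists. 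Your own absorption condition confirms this: a leaf $v \in V_i \setminus \{r_i\}$ has colour $f(i)$ while $r_{i-1}, r_{i+1}$ have colours $f(i-1), f(i+1) \neq f(i)$, so no leaf is ever absorbed. This is not a repairable edge case but the generic situation the paper has to fight: Lemma \ref{rainbow-blowup} handles exactly this colouring with an explicit move-by-move strategy and an induction on $t$, and the observation after Corollary \ref{cycle-blowup-ub} that the bound genuinely fails for $t = c$ (where $m(G,\omega) \geq c > c - \left\lceil c/c \right\rceil$) shows that no static ``spanning tree plus Theorem \ref{colour-bound}'' argument can succeed, since such an argument would be insensitive to the size of $t$ relative to $c$.

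Two further gaps. First, your appeal to ``slack'' from $t \geq 2c^{10}$ is a misconception: the target $t - \left\lceil t/c \right\rceil$ contains no additive slack, so every extra move paid through Lemma \ref{change-colouring} must be offset by an actual shortening of the effective path; this is precisely the bookkeeping carried out in Lemma \ref{path-section} and throughout Sections \ref{path-col} and \ref{all-col}. Large $t$ buys the existence of long structured stretches, not free moves. Second, the pigeonhole claim that some blob colour-profile occurs in a long \emph{consecutive} run is false (profiles can simply alternate), and in the regime where many blobs carry several colours the paper uses an entirely different mechanism: if $\theta(G,\omega) \geq c(c-1)$, some colour occurs at least $\left\lceil t/c \right\rceil + (c-1)$ times on an end-to-end path $Q$, so $Q$ can be flooded in at most $t - \left\lceil t/c \right\rceil - (c-1)$ moves and the remaining vertices flooded greedily with $c-1$ further moves via Proposition \ref{dominating-path}. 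That mechanism, together with the companion inductions on $|\grd(G,\omega)|$ and on $\theta(G,\omega)$, is the content your sketch is missing.
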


We have made no attempt to optimise the dependence of $t$ on $c$ in the statement of Theorem \ref{blowup-thm}, and indeed conjecture that the result is true for much smaller values of $t$.  However, it is clear that some dependence on $c$ is necessary, as it follows from Proposition \ref{c-col} that if $G$ is a blow-up of a path on $c$ vertices in which every vertex class has size at least two and, for $1 \leq i \leq c$, $\omega$ assigns colour $i$ to both vertices corresponding to the $i^{th}$ vertex of the path.

We also remark that we cannot improve on the bound of Proposition \ref{dominating-path-fixed} in the fixed case: suppose that the vertex classes are $V_1,\ldots,V_t$, with the pivot $v \in V_1$, and that the classes $V_1,\ldots,V_{t-1}$ are alternately coloured with colours $1$ and $2$, while $V_t$ contains a vertex of every colour.  It is easy to verify that we require $t+c-3$ moves in this case.

The remainder of the section is devoted to the proof of Theorem \ref{blowup-thm}.  In Section \ref{rainbow-col} we consider a generalisation of rainbow colourings to blow-ups of paths and obtain an upper bound on the number of moves required in this case; in Section \ref{path-col} we generalise our results to colourings in which all vertices ``blown up'' from a single vertex receive the same colour; finally, in Section \ref{all-col}, we deal with arbitrary colourings.

First, we observe that the upper bound in Theorem \ref{blowup-thm} is optimal.  Recall from the definition of a blow-up of a graph that, if $G$ is a blow-up of a path on $t$ vertices, the vertices of $G$ can be partitioned into vertex-classes $V_1,\ldots,V_t$ such that each class $V_i$ is an independent set in $G$ and $uw$ is an edge in $G$ if and only if $u \in V_i$ and $w \in V_j$ where $|i - j| = 1$.

\begin{lma}
Let $G$ be a blow-up of a path on $t$ vertices.  Then
$$M_c(G) \geq t - \left \lceil \frac{t}{c} \right \rceil.$$
\label{blowup-lb}
\end{lma}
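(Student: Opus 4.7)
The plan is to exhibit a specific rainbow colouring $\omega$ of $G$ and show $m(G,\omega) \geq t - \lceil t/c \rceil$. Assume first that $G$ is a blow-up of $P_t$ with vertex classes $V_1, \ldots, V_t$; let $f$ be a $C$-rainbow colouring of $P_t$ and define $\omega(v) = f(i)$ for every $v \in V_i$. Since $t \geq c$ (as $t \geq 2c^{10}$), this uses all $c$ colours and is a valid surjective colouring. The cycle case is handled analogously, using a rainbow colouring of $C_t$.

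The heart of the argument is a projection claim: if $G$ currently has a path colouring $\omega_G$ corresponding to some $f_G: \{1, \ldots, t\} \to C$, and one plays a move $(u, d_1)$ in $G$ with $u \in V_j$, then the resulting colouring is again a path colouring, namely the one induced by the colouring of $P_t$ obtained from $f_G$ by playing $(j, d_1)$. This holds because $G$ is a blow-up, so every adjacent pair $V_i, V_{i+1}$ is completely joined; consequently, the monochromatic component of $u$ in $G$ under $\omega_G$ is precisely $V_a \cup V_{a+1} \cup \cdots \cup V_b$, where $[a, b]$ is the maximal interval around $j$ on which $f_G$ is constant. Recolouring this entire union to $d_1$ leaves each $V_i$ monochromatic, preserving the path-colouring invariant.

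Iterating this claim, any move sequence $S$ of length $k$ in $G$ starting from $\omega$ projects to a move sequence $S'$ of the same length in $P_t$ starting from $f$ whose resulting colouring corresponds in the natural way. In particular, if $S$ floods $G$ with colour $d$ then $S'$ floods $P_t$ with colour $d$, so $m(G, \omega, d) \geq m(P_t, f, d)$ for every $d \in C$, whence $m(G, \omega) \geq m(P_t, f)$. I then invoke Corollary \ref{path-lb} to conclude $m(P_t, f) \geq t - \lceil t/c \rceil$, giving $M_c(G) \geq t - \lceil t/c \rceil$. In the cycle case, the projection sends moves in $G$ to moves in $C_t$, and the final inequality comes from Lemma \ref{cycle-lb}.

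The main obstacle is the projection claim: one must verify that every move in $G$ acts uniformly on entire vertex classes, never splitting some $V_i$. This is where the complete-join structure of a blow-up is essential, and I would state and prove this structural fact as a short preliminary claim before running the induction on the length of the move sequence.
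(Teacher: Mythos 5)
There is a genuine gap in your projection claim. In a blow-up of a path, each vertex class $V_i$ is an \emph{independent} set, so the monochromatic component of $u \in V_j$ under a path colouring $\omega_G$ is \textbf{not} in general $V_a \cup \cdots \cup V_b$ for the maximal constant interval $[a,b]$ around $j$: when that interval is the single index $j$ (which is the case for \emph{every} class under the initial rainbow colouring, since it is proper) and $|V_j| \geq 2$, the component of $u$ is just $\{u\}$. The very first move $(u,d_1)$ therefore recolours only $u$, leaving the rest of $V_j$ with its old colour, and the path-colouring invariant is destroyed immediately. Your induction on the length of the move sequence never gets started, and a lower-bound argument cannot simply dismiss such moves as suboptimal --- it must account for all flooding sequences, including those that deliberately split a class (e.g.\ giving $u \in V_j$ the colour of $V_{j+1}$ so as to attach it to a neighbouring component).

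The paper repairs exactly this difficulty by invoking Corollary \ref{subgraph} instead of a direct coupling: let $G'$ be obtained from $G$ by adding all edges inside each class $V_i$. Then $G \subseteq G'$ with $V(G) = V(G')$, so $m(G',\omega) \leq m(G,\omega)$, while $G'$ with the rainbow path colouring is equivalent (after contracting monochromatic components, each class now being a monochromatic clique) to a rainbow-coloured path or cycle on $t$ vertices, to which Corollary \ref{path-lb} or Lemma \ref{cycle-lb} applies. This yields $m(G,\omega) \geq m(G',\omega) \geq t - \lceil t/c \rceil$ without ever needing moves in $G$ itself to act uniformly on classes. Your choice of colouring and your final appeal to the path/cycle lower bounds are the right ingredients; replacing the false structural claim by this subgraph-monotonicity step would make the proof correct and essentially identical to the paper's.
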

\begin{proof}
We define a colouring $\omega: V(G) \rightarrow \{0,\ldots,c-1\}$ by setting $\omega(v) = i \mod c$, where $v \in V_i$, for $1 \leq i \leq t$.  Let $G'$ be the graph obtained from $G$ by adding all edges within each $V_i$; by Corollary \ref{subgraph} this cannot increase the number of moves required to flood the graph.  Moreover, it is clear that $G'$ with colouring $\omega$ is equivalent (contracting monochromatic components) to a path on $t$ vertices with a $C$-rainbow colouring.  We therefore know from Corollary \ref{path-result} that $m(G',\omega) \geq t - \left \lceil \frac{t}{c} \right \rceil$, so we also have $m(G,\omega) \geq t - \left \lceil \frac{t}{c} \right \rceil$, as required.
\end{proof}

\subsection{Rainbow colourings}
\label{rainbow-col}

We begin by defining an important restricted family of colourings for graphs that are blow-ups of paths.

\begin{adef}
Let $G=(V,E)$ be a blow-up of the path $P_t$ on $t$ vertices, and let $C = \{d_0,\ldots,d_{c-1}\}$ be a set of colours.  We say that the colouring $\omega: V \rightarrow C$ is a \emph{path colouring} of $G$ if there exists a function $f: \{1,\ldots,t\} \rightarrow C$ such that, for each $1 \leq i \leq t$, we have $\omega(v) = f(i)$ for every $v \in V_i$.
\end{adef}

Using this definition, we extend our definition of $C$-rainbow colourings to blow-ups of paths, to define a subfamily of path colourings.

\begin{adef}
Let $G=(V,E)$ be a blow-up of the path $P_t$ on $t$ vertices.  Suppose that $C=\{d_0,\ldots,d_{c-1}\}$ is a set of colours, and $\omega: V \rightarrow C$ is a path colouring of $V$.  The colouring $\omega$ is said to be a $C$-\emph{rainbow} colouring of $G$ if the corresponding colouring of $P_t$ is a $C$-rainbow colouring of the path.
\end{adef}

As for paths, we say that $\omega$ is a \emph{rainbow} colouring if it is a $C$-rainbow colouring for some colour-set $C$.  Note that the colouring used in the proof of Lemma \ref{blowup-lb} is a rainbow colouring.

We now prove that Theorem \ref{blowup-thm} holds if we restrict our attention only to rainbow colourings.

\begin{lma}
Let $G$ be a blow-up of the path $P_t$, and let $\omega$ be a $C$-rainbow colouring of $G$.  Then, if $t \geq c + 2$, we have
$$m(G,\omega) \leq t - \left \lceil \frac{t}{c} \right \rceil.$$
\label{rainbow-blowup}
\end{lma}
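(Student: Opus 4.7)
The plan is to exhibit a sequence of $k := t - \lceil t/c \rceil$ moves, all played at (the evolving monochromatic component of) a single carefully chosen vertex $u$, which floods $G$. After relabelling $C$, we may assume the rainbow colouring sends every vertex of $V_j$ to colour $j \bmod c$. Set $i := \lceil t/c \rceil$ (so $i + k = t$) and pick any $u \in V_i$; the proposed sequence $S = m_1, \ldots, m_k$ is defined by the rule that $m_j$ is played at the component currently containing $u$ and assigns it colour $(i + j) \bmod c$.

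The bulk of the argument is an inductive description of the component $C_j$ containing $u$ immediately after move $m_j$. We will claim that for each $1 \leq j \leq k$: (i) $C_j$ has colour $(i + j) \bmod c$; (ii) $V_m \subseteq C_j$ for every $i + 1 \leq m \leq \min(t, i + j)$; (iii) $V_m \subseteq C_j$ for every $\max(1, i - \lfloor j/(c-1) \rfloor) \leq m \leq i - 1$; and (iv) $V_i \subseteq C_j$ whenever $j \geq c$, while otherwise $C_j \cap V_i = \{u\}$. The inductive step uses the blow-up structure crucially: consecutive classes $V_{m-1}$ and $V_m$ are joined by a complete bipartite subgraph, so once $C_j$ takes colour $d$ and an entirely $d$-coloured class $V_m$ has some neighbouring class already in $C_{j-1}$, all of $V_m$ is absorbed by move $m_j$ in a single step.

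Each of the growth rules (ii)--(iv) comes down to modular arithmetic. For the right-extension rule, $V_{i+j}$ has colour $(i+j) \bmod c$ matching $C_j$ and is adjacent to $V_{i+j-1} \subseteq C_{j-1}$. For the left-extension rule, $V_{i-\ell}$ has colour $(i - \ell) \bmod c$, which equals the colour of $C_{\ell(c-1)}$ since $\ell(c-1) \equiv -\ell \pmod{c}$; inductively $V_{i-\ell+1} \subseteq C_{(\ell-1)(c-1)}$ supplies the required adjacency, and one verifies that the first move after $(\ell - 1)(c-1)$ at which the component colour matches $V_{i-\ell}$ is precisely $\ell(c-1)$. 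Finally the orphans $V_i \setminus \{u\}$ have colour $i \bmod c$, matching the colour of $C_c$, and they are adjacent to $V_{i-1} \subseteq C_{c-1}$ and $V_{i+1} \subseteq C_1$, so they rejoin at move $m_c$. The main technical obstacle is expected to be this left-extension bookkeeping, which requires careful tracking of both the colour cycle and the adjacency between classes.

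To finish, write $t = qc + r$ with $0 \leq r < c$ and perform a short case analysis on $r$. One obtains $\lfloor k/(c-1) \rfloor \geq i - 1$ (with equality when $r \geq 1$), so (ii) and (iii) applied at $j = k$ give $C_k \supseteq \bigcup_{m \neq i} V_m$. A second short calculation shows that the hypothesis $t \geq c + 2$ is equivalent to $k \geq c$ (it fails only at $t \in \{c, c + 1\}$, precisely where the orphans cannot be recovered), so (iv) yields $V_i \subseteq C_k$ as well. Therefore $C_k = V(G)$, the sequence $S$ floods $G$ in $k$ moves, and $m(G, \omega) \leq k = t - \lceil t/c \rceil$.
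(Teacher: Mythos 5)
Your construction is correct, and it takes a genuinely different route from the paper. The paper proves Lemma \ref{rainbow-blowup} by induction on $t$: three explicit base cases covering $c+2 \leq t \leq 3c+1$, each with its own single-vertex flooding strategy, followed by an inductive step that spends $2c-2$ moves collapsing $V_1 \cup \cdots \cup V_{2c+1}$ into a single class and reduces to a rainbow-coloured blow-up of $P_{t-2c}$. You replace all of this with one closed-form strategy: pivot at a vertex of $V_i$ with $i = \left\lceil t/c \right\rceil$ and cycle the component through the colours $(i+1) \bmod c, \ldots, t \bmod c$. The bookkeeping you identify is exactly what makes the count close --- rightward growth of one class per move, leftward growth of one class per $c-1$ moves, and recovery of the orphans of $V_i$ at move $c$ --- and I have checked the arithmetic: with $t = qc+r$ one gets $\lfloor k/(c-1)\rfloor = q \geq i-1$ in both cases, $(i-1)(c-1) \leq k$ so the leftmost class is reached in time, and $k \geq c$ exactly when $t \geq c+2$, so the hypothesis is used precisely where you say it is. The choice $i = \lceil t/c\rceil$ is the whole trick: it balances the fast rightward and slow leftward growth rates so that both ends are exhausted within $k = t - \lceil t/c\rceil$ moves. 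What your approach buys is a uniform argument with no induction and no case analysis (and it sidesteps a small internal inconsistency in the paper's reduction step, which describes the same preprocessing as costing ``$2c-2$ moves'' in one sentence and ``$2c-1$ moves'' in the next, though only the former is used in the final count); what the paper's approach buys is a reusable template --- shorten the underlying path by a fixed number of classes at a strictly smaller cost in moves --- which it invokes again when extending the bound to arbitrary path colourings in Section \ref{path-col}.
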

\begin{proof}
We prove the result by induction on $t$.  We begin by considering several base cases, which together cover the situation in which $c+2 \leq t \leq 3c + 1$.  As usual, we will denote by $V_1,\ldots,V_t$ the vertex-classes of $G$, and we may assume without loss of generality that $V_i$ receives colour $i \mod c$ under $\omega$.

For the first of these cases, suppose that $c + 2 \leq t \leq 2c$.  We describe a strategy to flood $G$ with $t - \left \lceil \frac{t}{c} \right \rceil = t - 2$ moves; this strategy is illustrated in Figures \ref{base-case-1} and \ref{base-case-1-redux}.  First, we play $c-1$ moves at some vertex $v \in V_2$, giving this vertex colours $3,4,\ldots,c-1,0,1$ in turn; this will create a monochromatic component containing $v$ and all of $V_1 \cup V_3 \cup \cdots \cup V_{c+1}$.  Note that the only vertices of $V_1 \cup \cdots \cup V_{c+1}$ that do not belong to this monochromatic component are in $V_2$ and have colour $2$.  Now we change the colour of this component to take colours $2,\ldots, t \mod c$ in turn; this will extend our monochromatic component to contain all of $V_{c+2} \cup \cdots \cup V_{t}$ and, as the component takes colour $2$ at the start of this subsequence of moves, all remaining vertices of $V_2$ will also be flooded.  Thus we have described a sequence of $c - 1 + t - (c+1) = t - 2$ moves which floods $G$, as required.

\begin{figure}

\centering
\begin{tikzpicture}[scale=.5]
\draw (0,18) ellipse (.8 and 1.5); \draw node at (0,18) {\scriptsize $1$}; \draw node at (0,20) {\scriptsize $V_1$};
\draw (.8,19)--(2.2,19)--(.8,18.5)--(2.2,18.5)--(.8,18)--(2.2,18)--(.8,17.5)--(2.2,17.5)--(.8,17)--(2.2,17);
\draw (2.2,19)--(.8,19)--(2.2,18.5)--(.8,18.5)--(2.2,18)--(.8,18)--(2.2,17.5)--(.8,17.5)--(2.2,17)--(.8,17);

\draw (3,18) ellipse (.8 and 1.5); \draw node at (3,19) {\scriptsize $2$};  \draw node at (3,20) {\scriptsize $V_2$};
\draw[fill] (3,17.5) circle(0.1); \draw node at (3,17) {\scriptsize $v$};
\draw (3.8,19)--(5.2,19)--(3.8,18.5)--(5.2,18.5)--(3.8,18)--(5.2,18)--(3.8,17.5)--(5.2,17.5)--(3.8,17)--(5.2,17);
\draw (5.2,19)--(3.8,19)--(5.2,18.5)--(3.8,18.5)--(5.2,18)--(3.8,18)--(5.2,17.5)--(3.8,17.5)--(5.2,17)--(3.8,17);

\draw (6.8,19)--(8.2,19)--(6.8,18.5)--(8.2,18.5)--(6.8,18)--(8.2,18)--(6.8,17.5)--(8.2,17.5)--(6.8,17)--(8.2,17);
\draw (8.2,19)--(6.8,19)--(8.2,18.5)--(6.8,18.5)--(8.2,18)--(6.8,18)--(8.2,17.5)--(6.8,17.5)--(8.2,17)--(6.8,17);

\draw node at (6,18) {\scriptsize $\dots$};

\draw (9,18) ellipse (.8 and 1.5); \draw node at (9,18) {\scriptsize $c-1$}; \draw node at (9,20) {\scriptsize $V_{c-1}$};
\draw (9.8,19)--(11.2,19)--(9.8,18.5)--(11.2,18.5)--(9.8,18)--(11.2,18)--(9.8,17.5)--(11.2,17.5)--(9.8,17)--(11.2,17);
\draw (11.2,19)--(9.8,19)--(11.2,18.5)--(9.8,18.5)--(11.2,18)--(9.8,18)--(11.2,17.5)--(9.8,17.5)--(11.2,17)--(9.8,17);

\draw (12,18) ellipse (.8 and 1.5); \draw node at (12,18) {\scriptsize $0$}; \draw node at (12,20) {\scriptsize $V_c$};
\draw (12.8,19)--(14.2,19)--(12.8,18.5)--(14.2,18.5)--(12.8,18)--(14.2,18)--(12.8,17.5)--(14.2,17.5)--(12.8,17)--(14.2,17);
\draw (14.2,19)--(12.8,19)--(14.2,18.5)--(12.8,18.5)--(14.2,18)--(12.8,18)--(14.2,17.5)--(12.8,17.5)--(14.2,17)--(12.8,17);

\draw (15,18) ellipse (.8 and 1.5); \draw node at (15,18) {\scriptsize $1$}; \draw node at (15,20) {\scriptsize $V_{c+1}$};
\draw (15.8,19)--(17.2,19)--(15.8,18.5)--(17.2,18.5)--(15.8,18)--(17.2,18)--(15.8,17.5)--(17.2,17.5)--(15.8,17)--(17.2,17);
\draw (17.2,19)--(15.8,19)--(17.2,18.5)--(15.8,18.5)--(17.2,18)--(15.8,18)--(17.2,17.5)--(15.8,17.5)--(17.2,17)--(15.8,17);

\draw (18,18) ellipse (.8 and 1.5); \draw node at (18,18) {\scriptsize $2$}; \draw node at (18,20) {\scriptsize $V_{c+2}$};
\draw (18.8,19)--(20.2,19)--(18.8,18.5)--(20.2,18.5)--(18.8,18)--(20.2,18)--(18.8,17.5)--(20.2,17.5)--(18.8,17)--(20.2,17);
\draw (20.2,19)--(18.8,19)--(20.2,18.5)--(18.8,18.5)--(20.2,18)--(18.8,18)--(20.2,17.5)--(18.8,17.5)--(20.2,17)--(18.8,17);

\draw (21,18) ellipse (.8 and 1.5); \draw node at (21,18) {\scriptsize $3$}; \draw node at (21,20) {\scriptsize $V_{c+3}$};
\draw (21.8,19)--(23.2,19)--(21.8,18.5)--(23.2,18.5)--(21.8,18)--(23.2,18)--(21.8,17.5)--(23.2,17.5)--(21.8,17)--(23.2,17);
\draw (23.2,19)--(21.8,19)--(23.2,18.5)--(21.8,18.5)--(23.2,18)--(21.8,18)--(23.2,17.5)--(21.8,17.5)--(23.2,17)--(21.8,17);

\draw node at (24,18) {\scriptsize $\dots$};

\draw (24.8,19)--(26.2,19)--(24.8,18.5)--(26.2,18.5)--(24.8,18)--(26.2,18)--(24.8,17.5)--(26.2,17.5)--(24.8,17)--(26.2,17);
\draw (26.2,19)--(24.8,19)--(26.2,18.5)--(24.8,18.5)--(26.2,18)--(24.8,18)--(26.2,17.5)--(24.8,17.5)--(26.2,17)--(24.8,17);
\draw (27,18) ellipse (.8 and 1.5); \draw node at (27,18) {\scriptsize $t_c$}; \draw node at (27,20) {\scriptsize $V_t$};
%%%%%%%%%%%%%%%%%%%%%%%%%%%%%%%%%%%%%%%%%%%%%%%%%%%%%%%%%%%%%%%%%%%%%%%%%%%%%
\path[->, font=\scriptsize] (7.5,16) edge node[auto] {$c-1$ moves} (7.5,14);

\draw (0,12) ellipse (.8 and 1.5); \draw node at (0,12) {\scriptsize $1$};
\draw (.8,13)--(2.2,13)--(.8,12.5)--(2.2,12.5)--(.8,12)--(2.2,12)--(.8,11.5)--(2.2,11.5);
\draw (2.2,13)--(.8,13)--(2.2,12.5)--(.8,12.5)--(2.2,12)--(.8,12)--(2.2,11.5)--(.8,11.5);

\draw (3,12.5) ellipse (.8 and 1); \draw node at (3,12.5) {\scriptsize $2$};
\draw (3.8,13)--(5.2,13)--(3.8,12.5)--(5.2,12.5)--(3.8,12)--(5.2,12)--(3.8,11.5)--(5.2,11.5);
\draw (5.2,13)--(3.8,13)--(5.2,12.5)--(3.8,12.5)--(5.2,12)--(3.8,12)--(5.2,11.5)--(3.8,11.5);

\draw[fill] (3,10.5) circle(0.1); \draw node at (3,10) {\scriptsize $v$ (colour 1)};
\draw (3,10.5)--(.8,13);\draw (3,10.5)--(5.2,13);\draw (3,10.5)--(.8,12.5);\draw (3,10.5)--(5.2,12.5);\draw (3,10.5)--(.8,12);\draw (3,10.5)--(5.2,12);\draw (3,10.5)--(.8,11.5);\draw (3,10.5)--(5.2,11.5);\draw (3,10.5)--(.8,11);\draw (3,10.5)--(5.2,11);

\draw (6.8,13)--(8.2,13)--(6.8,12.5)--(8.2,12.5)--(6.8,12)--(8.2,12)--(6.8,11.5)--(8.2,11.5)--(6.8,11)--(8.2,11);
\draw (8.2,13)--(6.8,13)--(8.2,12.5)--(6.8,12.5)--(8.2,12)--(6.8,12)--(8.2,11.5)--(6.8,11.5)--(8.2,11)--(6.8,11);

\draw node at (6,12) {\scriptsize $\dots$};

\draw (9,12) ellipse (.8 and 1.5); \draw node at (9,12) {\scriptsize $1$};
\draw (9.8,13)--(11.2,13)--(9.8,12.5)--(11.2,12.5)--(9.8,12)--(11.2,12)--(9.8,11.5)--(11.2,11.5)--(9.8,11)--(11.2,11);
\draw (11.2,13)--(9.8,13)--(11.2,12.5)--(9.8,12.5)--(11.2,12)--(9.8,12)--(11.2,11.5)--(9.8,11.5)--(11.2,11)--(9.8,11);

\draw (12,12) ellipse (.8 and 1.5); \draw node at (12,12) {\scriptsize $1$};
\draw (12.8,13)--(14.2,13)--(12.8,12.5)--(14.2,12.5)--(12.8,12)--(14.2,12)--(12.8,11.5)--(14.2,11.5)--(12.8,11)--(14.2,11);
\draw (14.2,13)--(12.8,13)--(14.2,12.5)--(12.8,12.5)--(14.2,12)--(12.8,12)--(14.2,11.5)--(12.8,11.5)--(14.2,11)--(12.8,11);

\draw (15,12) ellipse (.8 and 1.5); \draw node at (15,12) {\scriptsize $1$};
\draw (15.8,13)--(17.2,13)--(15.8,12.5)--(17.2,12.5)--(15.8,12)--(17.2,12)--(15.8,11.5)--(17.2,11.5)--(15.8,11)--(17.2,11);
\draw (17.2,13)--(15.8,13)--(17.2,12.5)--(15.8,12.5)--(17.2,12)--(15.8,12)--(17.2,11.5)--(15.8,11.5)--(17.2,11)--(15.8,11);

\draw (18,12) ellipse (.8 and 1.5); \draw node at (18,12) {\scriptsize $2$};
\draw (18.8,13)--(20.2,13)--(18.8,12.5)--(20.2,12.5)--(18.8,12)--(20.2,12)--(18.8,11.5)--(20.2,11.5)--(18.8,11)--(20.2,11);
\draw (20.2,13)--(18.8,13)--(20.2,12.5)--(18.8,12.5)--(20.2,12)--(18.8,12)--(20.2,11.5)--(18.8,11.5)--(20.2,11)--(18.8,11);

\draw (21,12) ellipse (.8 and 1.5); \draw node at (21,12) {\scriptsize $3$};
\draw (21.8,13)--(23.2,13)--(21.8,12.5)--(23.2,12.5)--(21.8,12)--(23.2,12)--(21.8,11.5)--(23.2,11.5)--(21.8,11)--(23.2,11);
\draw (23.2,13)--(21.8,13)--(23.2,12.5)--(21.8,12.5)--(23.2,12)--(21.8,12)--(23.2,11.5)--(21.8,11.5)--(23.2,11)--(21.8,11);

\draw node at (24,12) {\scriptsize $\dots$};

\draw (24.8,13)--(26.2,13)--(24.8,12.5)--(26.2,12.5)--(24.8,12)--(26.2,12)--(24.8,11.5)--(26.2,11.5)--(24.8,11)--(26.2,11);
\draw (26.2,13)--(24.8,13)--(26.2,12.5)--(24.8,12.5)--(26.2,12)--(24.8,12)--(26.2,11.5)--(24.8,11.5)--(26.2,11)--(24.8,11);
\draw (27,12) ellipse (.8 and 1.5); \draw node at (27,12) {\scriptsize $t_c$};
%%%%%%%%%%%%%%%%%%%%%%%%%%%%%%%%%%%%%%%%%%%%%%%%%%%%%%%%%%%%%%%%%%%%%%%%%%%%%%%%%%%
\path[->, font=\scriptsize] (13.5,10) edge node[auto] {$1$ move} (13.5,8); 

\draw (0,6) ellipse (.8 and 1.5); \draw node at (0,6) {\scriptsize $2$};
\draw (.8,7)--(2.2,7)--(.8,6.5)--(2.2,6.5)--(.8,6)--(2.2,6)--(.8,5.5)--(2.2,5.5)--(.8,5)--(2.2,5);
\draw (2.2,7)--(.8,7)--(2.2,6.5)--(.8,6.5)--(2.2,6)--(.8,6)--(2.2,5.5)--(.8,5.5)--(2.2,5)--(.8,5);

\draw (3,6) ellipse (.8 and 1.5); \draw node at (3,7) {\scriptsize $2$};
\draw[fill] (3,5.5) circle(0.1); \draw node at (3,5) {\scriptsize $v$};
\draw (3.8,7)--(5.2,7)--(3.8,6.5)--(5.2,6.5)--(3.8,6)--(5.2,6)--(3.8,5.5)--(5.2,5.5)--(3.8,5)--(5.2,5);
\draw (5.2,7)--(3.8,7)--(5.2,6.5)--(3.8,6.5)--(5.2,6)--(3.8,6)--(5.2,5.5)--(3.8,5.5)--(5.2,5)--(3.8,5);

\draw (6.8,7)--(8.2,7)--(6.8,6.5)--(8.2,6.5)--(6.8,6)--(8.2,6)--(6.8,5.5)--(8.2,5.5)--(6.8,5)--(8.2,5);
\draw (8.2,7)--(6.8,7)--(8.2,6.5)--(6.8,6.5)--(8.2,6)--(6.8,6)--(8.2,5.5)--(6.8,5.5)--(8.2,5)--(6.8,5);

\draw node at (6,6) {\scriptsize $\dots$};

\draw (9,6) ellipse (.8 and 1.5); \draw node at (9,6) {\scriptsize $2$};
\draw (9.8,7)--(11.2,7)--(9.8,6.5)--(11.2,6.5)--(9.8,6)--(11.2,6)--(9.8,5.5)--(11.2,5.5)--(9.8,5)--(11.2,5);
\draw (11.2,7)--(9.8,7)--(11.2,6.5)--(9.8,6.5)--(11.2,6)--(9.8,6)--(11.2,5.5)--(9.8,5.5)--(11.2,5)--(9.8,5);

\draw (12,6) ellipse (.8 and 1.5); \draw node at (12,6) {\scriptsize $2$};
\draw (12.8,7)--(14.2,7)--(12.8,6.5)--(14.2,6.5)--(12.8,6)--(14.2,6)--(12.8,5.5)--(14.2,5.5)--(12.8,5)--(14.2,5);
\draw (14.2,7)--(12.8,7)--(14.2,6.5)--(12.8,6.5)--(14.2,6)--(12.8,6)--(14.2,5.5)--(12.8,5.5)--(14.2,5)--(12.8,5);

\draw (15,6) ellipse (.8 and 1.5); \draw node at (15,6) {\scriptsize $2$};
\draw (15.8,7)--(17.2,7)--(15.8,6.5)--(17.2,6.5)--(15.8,6)--(17.2,6)--(15.8,5.5)--(17.2,5.5)--(15.8,5)--(17.2,5);
\draw (17.2,7)--(15.8,7)--(17.2,6.5)--(15.8,6.5)--(17.2,6)--(15.8,6)--(17.2,5.5)--(15.8,5.5)--(17.2,5)--(15.8,5);

\draw (18,6) ellipse (.8 and 1.5); \draw node at (18,6) {\scriptsize $2$};
\draw (18.8,7)--(20.2,7)--(18.8,6.5)--(20.2,6.5)--(18.8,6)--(20.2,6)--(18.8,5.5)--(20.2,5.5)--(18.8,5)--(20.2,5);
\draw (20.2,7)--(18.8,7)--(20.2,6.5)--(18.8,6.5)--(20.2,6)--(18.8,6)--(20.2,5.5)--(18.8,5.5)--(20.2,5)--(18.8,5);

\draw (21,6) ellipse (.8 and 1.5); \draw node at (21,6) {\scriptsize $3$};
\draw (21.8,7)--(23.2,7)--(21.8,6.5)--(23.2,6.5)--(21.8,6)--(23.2,6)--(21.8,5.5)--(23.2,5.5)--(21.8,5)--(23.2,5);
\draw (23.2,7)--(21.8,7)--(23.2,6.5)--(21.8,6.5)--(23.2,6)--(21.8,6)--(23.2,5.5)--(21.8,5.5)--(23.2,5)--(21.8,5);

\draw node at (24,6) {\scriptsize $\dots$};

\draw (24.8,7)--(26.2,7)--(24.8,6.5)--(26.2,6.5)--(24.8,6)--(26.2,6)--(24.8,5.5)--(26.2,5.5)--(24.8,5)--(26.2,5);
\draw (26.2,7)--(24.8,7)--(26.2,6.5)--(24.8,6.5)--(26.2,6)--(24.8,6)--(26.2,5.5)--(24.8,5.5)--(26.2,5)--(24.8,5);
\draw (27,6) ellipse (.8 and 1.5); \draw node at (27,6) {\scriptsize $t_c$};
%%%%%%%%%%%%%%%%%%%%%%%%%%%%%%%%%%%%%%%%%%%%%%%%%%%%%%%%%%%%%%%%%%%%%%%%%%%%%%%%%%%
\path[->, font=\scriptsize] (19.5,4) edge node[auto] {$t-(c+2)$ moves} (19.5,2); 

\draw (0,0) ellipse (.8 and 1.5); \draw node at (0,0) {\scriptsize $t_c$};
\draw (.8,1)--(2.2,1)--(.8,.5)--(2.2,.5)--(.8,0)--(2.2,0)--(.8,-.5)--(2.2,-.5)--(.8,-1)--(2.2,-1);
\draw (2.2,1)--(.8,1)--(2.2,.5)--(.8,.5)--(2.2,0)--(.8,0)--(2.2,-.5)--(.8,-.5)--(2.2,-1)--(.8,-1);

\draw (3,0) ellipse (.8 and 1.5); \draw node at (3,0) {\scriptsize $t_c$};
\draw (3.8,1)--(5.2,1)--(3.8,.5)--(5.2,.5)--(3.8,0)--(5.2,0)--(3.8,-.5)--(5.2,-.5)--(3.8,-1)--(5.2,-1);
\draw (5.2,1)--(3.8,1)--(5.2,.5)--(3.8,.5)--(5.2,0)--(3.8,0)--(5.2,-.5)--(3.8,-.5)--(5.2,-1)--(3.8,-1);

\draw (6.8,1)--(8.2,1)--(6.8,.5)--(8.2,.5)--(6.8,0)--(8.2,0)--(6.8,-.5)--(8.2,-.5)--(6.8,-1)--(8.2,-1);
\draw (8.2,1)--(6.8,1)--(8.2,.5)--(6.8,.5)--(8.2,0)--(6.8,0)--(8.2,-.5)--(6.8,-.5)--(8.2,-1)--(6.8,-1);

\draw node at (6,0) {\scriptsize $\dots$};

\draw (9,0) ellipse (.8 and 1.5); \draw node at (9,0) {\scriptsize $t_c$};
\draw (9.8,1)--(11.2,1)--(9.8,.5)--(11.2,.5)--(9.8,0)--(11.2,0)--(9.8,-.5)--(11.2,-.5)--(9.8,-1)--(11.2,-1);
\draw (11.2,1)--(9.8,1)--(11.2,.5)--(9.8,.5)--(11.2,0)--(9.8,0)--(11.2,-.5)--(9.8,-.5)--(11.2,-1)--(9.8,-1);

\draw (12,0) ellipse (.8 and 1.5); \draw node at (12,0) {\scriptsize $t_c$};
\draw (12.8,1)--(14.2,1)--(12.8,.5)--(14.2,.5)--(12.8,0)--(14.2,0)--(12.8,-.5)--(14.2,-.5)--(12.8,-1)--(14.2,-1);
\draw (14.2,1)--(12.8,1)--(14.2,.5)--(12.8,.5)--(14.2,0)--(12.8,0)--(14.2,-.5)--(12.8,-.5)--(14.2,-1)--(12.8,-1);

\draw (15,0) ellipse (.8 and 1.5); \draw node at (15,0) {\scriptsize $t_c$};
\draw (15.8,1)--(17.2,1)--(15.8,.5)--(17.2,.5)--(15.8,0)--(17.2,0)--(15.8,-.5)--(17.2,-.5)--(15.8,-1)--(17.2,-1);
\draw (17.2,1)--(15.8,1)--(17.2,.5)--(15.8,.5)--(17.2,0)--(15.8,0)--(17.2,-.5)--(15.8,-.5)--(17.2,-1)--(15.8,-1);

\draw (18,0) ellipse (.8 and 1.5); \draw node at (18,0) {\scriptsize $t_c$};
\draw (18.8,1)--(20.2,1)--(18.8,.5)--(20.2,.5)--(18.8,0)--(20.2,0)--(18.8,-.5)--(20.2,-.5)--(18.8,-1)--(20.2,-1);
\draw (20.2,1)--(18.8,1)--(20.2,.5)--(18.8,.5)--(20.2,0)--(18.8,0)--(20.2,-.5)--(18.8,-.5)--(20.2,-1)--(18.8,-1);

\draw (21,0) ellipse (.8 and 1.5); \draw node at (21,0) {\scriptsize $t_c$};
\draw (21.8,1)--(23.2,1)--(21.8,.5)--(23.2,.5)--(21.8,0)--(23.2,0)--(21.8,-.5)--(23.2,-.5)--(21.8,-1)--(23.2,-1);
\draw (23.2,1)--(21.8,1)--(23.2,.5)--(21.8,.5)--(23.2,0)--(21.8,0)--(23.2,-.5)--(21.8,-.5)--(23.2,-1)--(21.8,-1);

\draw node at (24,0) {\scriptsize $\dots$};

\draw (24.8,1)--(26.2,1)--(24.8,.5)--(26.2,.5)--(24.8,0)--(26.2,0)--(24.8,-.5)--(26.2,-.5)--(24.8,-1)--(26.2,-1);
\draw (26.2,1)--(24.8,1)--(26.2,.5)--(24.8,.5)--(26.2,0)--(24.8,0)--(26.2,-.5)--(24.8,-.5)--(26.2,-1)--(24.8,-1);
\draw (27,0) ellipse (.8 and 1.5); \draw node at (27,0) {\scriptsize $t_c$};
\end{tikzpicture}
\caption{The first base case for Lemma \ref{rainbow-blowup}: detailed version.  We write $t_c$ for $t \mod c$.}
\label{base-case-1}
\end{figure}
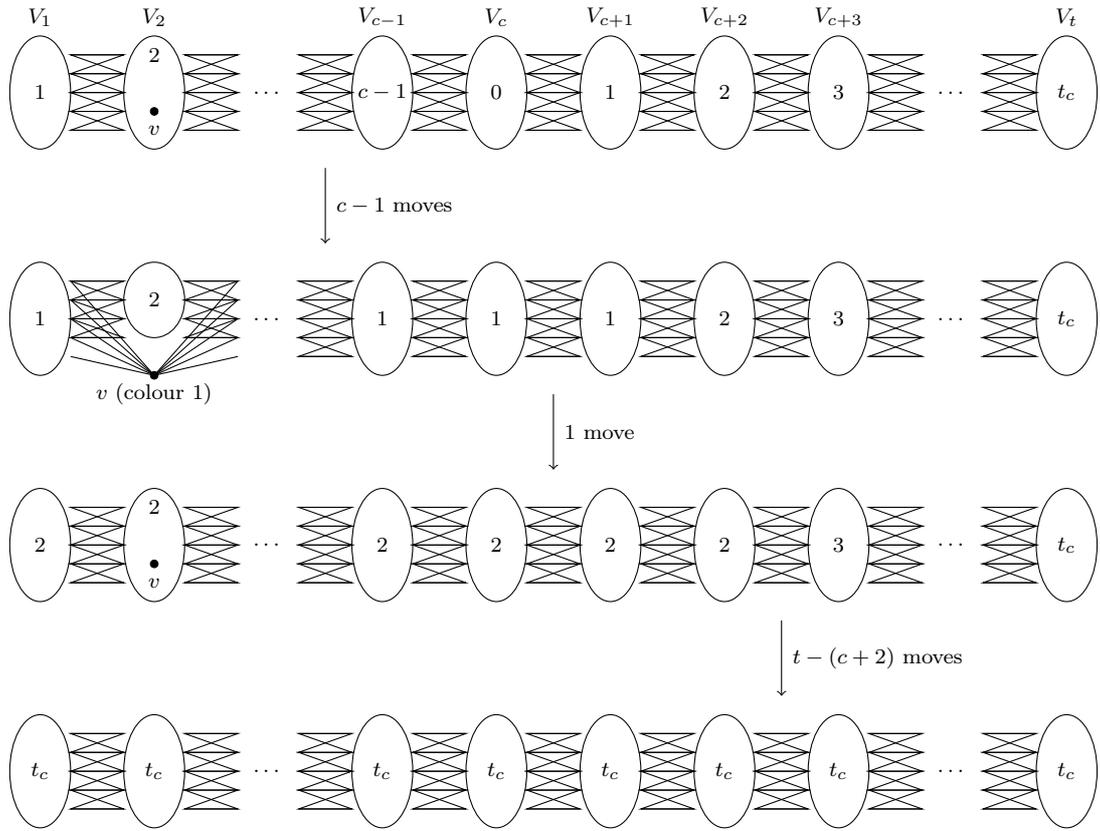
%Figure~\ref{base-case-1} illustrates the first base case in detail, for the following cases, we will only give a reduced schematic. The reduced version of the first case is given in the following figure.
%
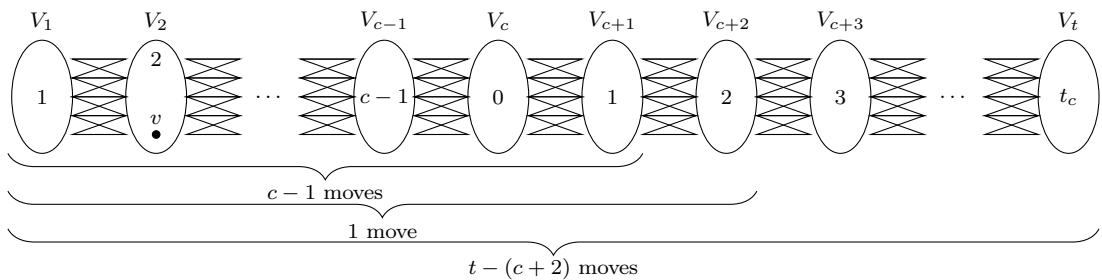
\begin{figure}
\centering
\begin{tikzpicture}[scale=.5]
\draw (0,0) ellipse (.8 and 1.5); \draw node at (0,0) {\scriptsize $1$}; \draw node at (0,2) {\scriptsize $V_1$};
\draw (.8,1)--(2.2,1)--(.8,.5)--(2.2,.5)--(.8,0)--(2.2,0)--(.8,-.5)--(2.2,-.5)--(.8,-1)--(2.2,-1);
\draw (2.2,1)--(.8,1)--(2.2,.5)--(.8,.5)--(2.2,0)--(.8,0)--(2.2,-.5)--(.8,-.5)--(2.2,-1)--(.8,-1);

\draw (3,0) ellipse (.8 and 1.5); \draw node at (3,1) {\scriptsize $2$};\draw node at (3,2) {\scriptsize $V_2$};
\draw[fill] (3,-1) circle (0.1); \draw[above] node at (3,-1){\scriptsize $v$};
\draw (3.8,1)--(5.2,1)--(3.8,.5)--(5.2,.5)--(3.8,0)--(5.2,0)--(3.8,-.5)--(5.2,-.5)--(3.8,-1)--(5.2,-1);
\draw (5.2,1)--(3.8,1)--(5.2,.5)--(3.8,.5)--(5.2,0)--(3.8,0)--(5.2,-.5)--(3.8,-.5)--(5.2,-1)--(3.8,-1);

\draw (6.8,1)--(8.2,1)--(6.8,.5)--(8.2,.5)--(6.8,0)--(8.2,0)--(6.8,-.5)--(8.2,-.5)--(6.8,-1)--(8.2,-1);
\draw (8.2,1)--(6.8,1)--(8.2,.5)--(6.8,.5)--(8.2,0)--(6.8,0)--(8.2,-.5)--(6.8,-.5)--(8.2,-1)--(6.8,-1);

\draw node at (6,0) {\scriptsize $\dots$};

\draw (9,0) ellipse (.8 and 1.5); \draw node at (9,0) {\scriptsize $c-1$}; \draw node at (9,2) {\scriptsize $V_{c-1}$};
\draw (9.8,1)--(11.2,1)--(9.8,.5)--(11.2,.5)--(9.8,0)--(11.2,0)--(9.8,-.5)--(11.2,-.5)--(9.8,-1)--(11.2,-1);
\draw (11.2,1)--(9.8,1)--(11.2,.5)--(9.8,.5)--(11.2,0)--(9.8,0)--(11.2,-.5)--(9.8,-.5)--(11.2,-1)--(9.8,-1);

\draw (12,0) ellipse (.8 and 1.5); \draw node at (12,0) {\scriptsize $0$}; \draw node at (12,2) {\scriptsize $V_{c}$};
\draw (12.8,1)--(14.2,1)--(12.8,.5)--(14.2,.5)--(12.8,0)--(14.2,0)--(12.8,-.5)--(14.2,-.5)--(12.8,-1)--(14.2,-1);
\draw (14.2,1)--(12.8,1)--(14.2,.5)--(12.8,.5)--(14.2,0)--(12.8,0)--(14.2,-.5)--(12.8,-.5)--(14.2,-1)--(12.8,-1);

\draw (15,0) ellipse (.8 and 1.5); \draw node at (15,0) {\scriptsize $1$}; \draw node at (15,2) {\scriptsize $V_{c+1}$};
\draw (15.8,1)--(17.2,1)--(15.8,.5)--(17.2,.5)--(15.8,0)--(17.2,0)--(15.8,-.5)--(17.2,-.5)--(15.8,-1)--(17.2,-1);
\draw (17.2,1)--(15.8,1)--(17.2,.5)--(15.8,.5)--(17.2,0)--(15.8,0)--(17.2,-.5)--(15.8,-.5)--(17.2,-1)--(15.8,-1);

\draw (18,0) ellipse (.8 and 1.5); \draw node at (18,0) {\scriptsize $2$}; \draw node at (18,2) {\scriptsize $V_{c+2}$};
\draw (18.8,1)--(20.2,1)--(18.8,.5)--(20.2,.5)--(18.8,0)--(20.2,0)--(18.8,-.5)--(20.2,-.5)--(18.8,-1)--(20.2,-1);
\draw (20.2,1)--(18.8,1)--(20.2,.5)--(18.8,.5)--(20.2,0)--(18.8,0)--(20.2,-.5)--(18.8,-.5)--(20.2,-1)--(18.8,-1);

\draw (21,0) ellipse (.8 and 1.5); \draw node at (21,0) {\scriptsize $3$};  \draw node at (21,2) {\scriptsize $V_{c+3}$};
\draw (21.8,1)--(23.2,1)--(21.8,.5)--(23.2,.5)--(21.8,0)--(23.2,0)--(21.8,-.5)--(23.2,-.5)--(21.8,-1)--(23.2,-1);
\draw (23.2,1)--(21.8,1)--(23.2,.5)--(21.8,.5)--(23.2,0)--(21.8,0)--(23.2,-.5)--(21.8,-.5)--(23.2,-1)--(21.8,-1);

\draw node at (24,0) {\scriptsize $\dots$};

\draw (24.8,1)--(26.2,1)--(24.8,.5)--(26.2,.5)--(24.8,0)--(26.2,0)--(24.8,-.5)--(26.2,-.5)--(24.8,-1)--(26.2,-1);
\draw (26.2,1)--(24.8,1)--(26.2,.5)--(24.8,.5)--(26.2,0)--(24.8,0)--(26.2,-.5)--(24.8,-.5)--(26.2,-1)--(24.8,-1);
\draw (27,0) ellipse (.8 and 1.5); \draw node at (27,0) {\scriptsize $t_c$};  \draw node at (27,2) {\scriptsize $V_{t}$};

\draw [decorate,decoration={brace,amplitude=10, mirror},xshift=0,yshift=0] (-0.9,-1.5) -- (15.8,-1.5) node [black,midway,yshift=-15] {\scriptsize $c-1$ moves};
\draw [decorate,decoration={brace,amplitude=10, mirror},xshift=0,yshift=0] (-0.9,-2.5) -- (18.8,-2.5) node [black,midway,yshift=-15] {\scriptsize $1$ move};
\draw [decorate,decoration={brace,amplitude=10, mirror},xshift=0,yshift=0] (-0.9,-3.5) -- (27.8,-3.5) node [black,midway,yshift=-15] {\scriptsize $t-(c+2)$ moves};
\end{tikzpicture}
\caption{A reduced schematic of the first base case of Lemma \ref{rainbow-blowup}.  We write $t_c$ for $t \mod c$.}
\label{base-case-1-redux}
\end{figure}

For the second base case, suppose that $2c + 1 \leq t \leq 3c$.  In this case we play $t - \left \lceil \frac{t}{c} \right \rceil = t - 3$ moves, all at a vertex $v \in V_{c+1}$, as illustrated in Figure \ref{base-case-2}.  We first change the colour of $v$ to take colours $2,3,\ldots,0$ in turn; this creates a monochromatic component containing $v$ and all of $V_c \cup V_{c+2} \cup \cdots \cup V_{2c}$.  Next we give $v$ colours $c-1,c-2,\ldots,2$ in turn; at this point there is a monochromatic component containing all of $V_2 \cup \cdots \cup V_{2c}$ except for some vertices in $V_{c+1}$ of colour $1$.  Playing one further move to give this component colour $1$ therefore creates a monochromatic component containing all of $V_1 \cup \cdots \cup V_{2c+1}$.  To flood the remainder of $G$, we give this component colours $2,\ldots,t \mod c$ in turn.  This strategy allows us to flood $G$ in a total of $c - 1 + c - 2 + 1 + t - (2c + 1) = t - 3$ moves, as required.

\begin{figure}[h]
\centering
\begin{tikzpicture}[scale=.45]
\draw (0,0) ellipse (.8 and 1.5); \draw node at (0,0) {\scriptsize $1$};  \draw node at (0,2) {\scriptsize $V_{1}$};
\draw (.66,1)--(1.33,1)--(.72,.5)--(1.28,.5)--(.8,0)--(1.2,0)--(.72,-.5)--(1.25,-.5)--(.66,-1)--(1.33,-1);
\draw (1.33,1)--(.66,1)--(1.28,.5)--(.72,.5)--(1.2,0)--(.8,0)--(1.28,-.5)--(.72,-.5)--(1.33,-1)--(.66,-1);
\draw (2,0) ellipse (.8 and 1.5); \draw node at (2,0) {\scriptsize $2$};  \draw node at (2,2) {\scriptsize $V_{2}$};
\draw (2.66,1)--(3.33,1)--(2.72,.5)--(3.28,.5)--(2.8,0)--(3.2,0)--(2.72,-.5)--(3.25,-.5)--(2.66,-1)--(3.33,-1);
\draw (3.33,1)--(2.66,1)--(3.28,.5)--(2.72,.5)--(3.2,0)--(2.8,0)--(3.28,-.5)--(2.72,-.5)--(3.33,-1)--(2.66,-1);
\draw (4,0) ellipse (.8 and 1.5); \draw node at (4,0) {\scriptsize $3$};  \draw node at (4,2) {\scriptsize $V_{3}$};
\draw (4.66,1)--(5.33,1)--(4.72,.5)--(5.28,.5)--(4.8,0)--(5.2,0)--(4.72,-.5)--(5.25,-.5)--(4.66,-1)--(5.33,-1);
\draw (5.33,1)--(4.66,1)--(5.28,.5)--(4.72,.5)--(5.2,0)--(4.8,0)--(5.28,-.5)--(4.72,-.5)--(5.33,-1)--(4.66,-1);
\draw node at (6,0) {\scriptsize $\dots$};
\draw (6.66,1)--(7.33,1)--(6.72,.5)--(7.28,.5)--(6.8,0)--(7.2,0)--(6.72,-.5)--(7.25,-.5)--(6.66,-1)--(7.33,-1);
\draw (7.33,1)--(6.66,1)--(7.28,.5)--(6.72,.5)--(7.2,0)--(6.8,0)--(7.28,-.5)--(6.72,-.5)--(7.33,-1)--(6.66,-1);
\draw (8,0) ellipse (.8 and 1.5); \draw node at (8,0) {\tiny $c-1$};  \draw node at (8,2) {\scriptsize $V_{c-1}$};
\draw (8.66,1)--(9.33,1)--(8.72,.5)--(9.28,.5)--(8.8,0)--(9.2,0)--(8.72,-.5)--(9.25,-.5)--(8.66,-1)--(9.33,-1);
\draw (9.33,1)--(8.66,1)--(9.28,.5)--(8.72,.5)--(9.2,0)--(8.8,0)--(9.28,-.5)--(8.72,-.5)--(9.33,-1)--(8.66,-1);
\draw (10,0) ellipse (.8 and 1.5); \draw node at (10,0) {\scriptsize $0$};  \draw node at (10,2) {\scriptsize $V_{c}$};
\draw (10.66,1)--(11.33,1)--(10.72,.5)--(11.28,.5)--(10.8,0)--(11.2,0)--(10.72,-.5)--(11.25,-.5)--(10.66,-1)--(11.33,-1);
\draw (11.33,1)--(10.66,1)--(11.28,.5)--(10.72,.5)--(11.2,0)--(10.8,0)--(11.28,-.5)--(10.72,-.5)--(11.33,-1)--(10.66,-1);
\draw (12,0) ellipse (.8 and 1.5); \draw node at (12,.75) {\scriptsize $1$};  \draw node at (12,2) {\scriptsize $V_{c+1}$};
\draw[fill] (12,-1) circle (0.1); \draw node at (12,-.5) {\scriptsize $v$};
\draw (12.66,1)--(13.33,1)--(12.72,.5)--(13.28,.5)--(12.8,0)--(13.2,0)--(12.72,-.5)--(13.25,-.5)--(12.66,-1)--(13.33,-1);
\draw (13.33,1)--(12.66,1)--(13.28,.5)--(12.72,.5)--(13.2,0)--(12.8,0)--(13.28,-.5)--(12.72,-.5)--(13.33,-1)--(12.66,-1);
\draw (14,0) ellipse (.8 and 1.5); \draw node at (14,0) {\scriptsize $2$};  \draw node at (14,2) {\scriptsize $V_{c+2}$};
\draw (14.66,1)--(15.33,1)--(14.72,.5)--(15.28,.5)--(14.8,0)--(15.2,0)--(14.72,-.5)--(15.25,-.5)--(14.66,-1)--(15.33,-1);
\draw (15.33,1)--(14.66,1)--(15.28,.5)--(14.72,.5)--(15.2,0)--(14.8,0)--(15.28,-.5)--(14.72,-.5)--(15.33,-1)--(14.66,-1);
\draw (16,0) ellipse (.8 and 1.5); \draw node at (16,0) {\scriptsize $3$};  \draw node at (16,2) {\scriptsize $V_{c+3}$};
\draw (16.66,1)--(17.33,1)--(16.72,.5)--(17.28,.5)--(16.8,0)--(17.2,0)--(16.72,-.5)--(17.25,-.5)--(16.66,-1)--(17.33,-1);
\draw (17.33,1)--(16.66,1)--(17.28,.5)--(16.72,.5)--(17.2,0)--(16.8,0)--(17.28,-.5)--(16.72,-.5)--(17.33,-1)--(16.66,-1);
\draw node at (18,0) {\scriptsize $\dots$};
\draw (18.66,1)--(19.33,1)--(18.72,.5)--(19.28,.5)--(18.8,0)--(19.2,0)--(18.72,-.5)--(19.25,-.5)--(18.66,-1)--(19.33,-1);
\draw (19.33,1)--(18.66,1)--(19.28,.5)--(18.72,.5)--(19.2,0)--(18.8,0)--(19.28,-.5)--(18.72,-.5)--(19.33,-1)--(18.66,-1);
\draw (20,0) ellipse (.8 and 1.5); \draw node at (20,0) {\scriptsize $0$};  \draw node at (20,2) {\scriptsize $V_{2c}$};
\draw (20.66,1)--(21.33,1)--(20.72,.5)--(21.28,.5)--(20.8,0)--(21.2,0)--(20.72,-.5)--(21.25,-.5)--(20.66,-1)--(21.33,-1);
\draw (21.33,1)--(20.66,1)--(21.28,.5)--(20.72,.5)--(21.2,0)--(20.8,0)--(21.28,-.5)--(20.72,-.5)--(21.33,-1)--(20.66,-1);
\draw (22,0) ellipse (.8 and 1.5); \draw node at (22,0) {\scriptsize $1$};  \draw node at (22,2) {\scriptsize $V_{2c+1}$};
\draw (22.66,1)--(23.33,1)--(22.72,.5)--(23.28,.5)--(22.8,0)--(23.2,0)--(22.72,-.5)--(23.25,-.5)--(22.66,-1)--(23.33,-1);
\draw (23.33,1)--(22.66,1)--(23.28,.5)--(22.72,.5)--(23.2,0)--(22.8,0)--(23.28,-.5)--(22.72,-.5)--(23.33,-1)--(22.66,-1);
\draw (24,0) ellipse (.8 and 1.5); \draw node at (24,0) {\scriptsize $2$};  \draw node at (24,2) {\scriptsize $V_{2c+2}$};
\draw (24.66,1)--(25.33,1)--(24.72,.5)--(25.28,.5)--(24.8,0)--(25.2,0)--(24.72,-.5)--(25.25,-.5)--(24.66,-1)--(25.33,-1);
\draw (25.33,1)--(24.66,1)--(25.28,.5)--(24.72,.5)--(25.2,0)--(24.8,0)--(25.28,-.5)--(24.72,-.5)--(25.33,-1)--(24.66,-1);
\draw node at (26,0) {\scriptsize $\dots$};
\draw (26.66,1)--(27.33,1)--(26.72,.5)--(27.28,.5)--(26.8,0)--(27.2,0)--(26.72,-.5)--(27.25,-.5)--(26.66,-1)--(27.33,-1);
\draw (27.33,1)--(26.66,1)--(27.28,.5)--(26.72,.5)--(27.2,0)--(26.8,0)--(27.28,-.5)--(26.72,-.5)--(27.33,-1)--(26.66,-1);
\draw (28.2,0) ellipse (.8 and 1.5); \draw node at (28.2,0) {$t_c$};  \draw node at (28.2,2) {\scriptsize $V_{t}$};

\draw [decorate,decoration={brace,amplitude=10, mirror},xshift=0,yshift=0] (9,-1.5) -- (20.8,-1.5) node [black,midway,yshift=-15] {\scriptsize $c-1$ moves};
\draw [decorate,decoration={brace,amplitude=10, mirror},xshift=0,yshift=0] (1,-2.5) -- (21,-2.5) node [black,midway,yshift=-15] {\scriptsize $c-2$ moves};
\draw [decorate,decoration={brace,amplitude=10, mirror},xshift=0,yshift=0] (-1,-3.5) -- (23,-3.5) node [black,midway,yshift=-15] {\scriptsize $1$ move};
\draw [decorate,decoration={brace,amplitude=10, mirror},xshift=0,yshift=0] (-1,-4.5) -- (29.1,-4.5) node [black,midway,yshift=-15] {\scriptsize $t-(2c+1)$ moves};

\draw[dashed] (-1,1.7)--(-1,-4.5); \draw[dashed] (1,1.7)--(1,-2.5); \draw[dashed] (9,1.7)--(9,-1.5); \draw[dashed] (21,1.7)--(21,-2.5); \draw[dashed] (23,1.7)--(23,-3.5); \draw[dashed] (29.1,1.7)--(29.1,-4.5);

\end{tikzpicture}
\caption{The second base case for Lemma \ref{rainbow-blowup}}
\label{base-case-2}
\end{figure}
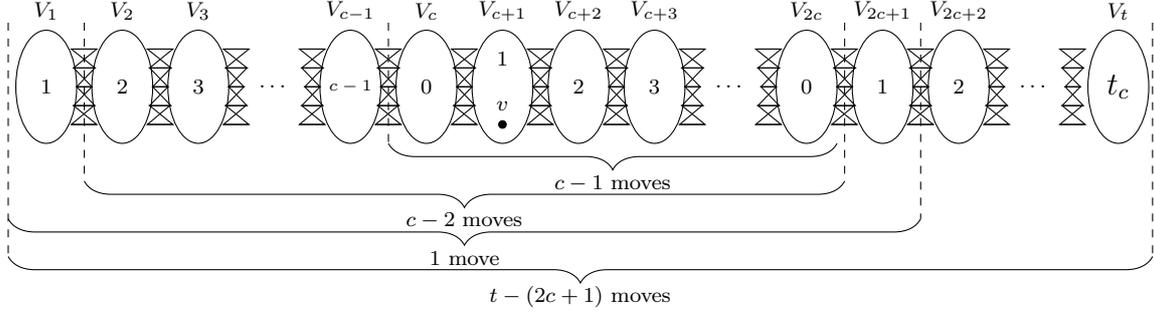

For the final base case, suppose that $t = 3c + 1$.  In this case we play $t - 4 = t - \left \lceil \frac{t}{c} \right \rceil$ moves, all at a vertex $v \in V_{c+2}$, as illustrated in Figure \ref{base-case-3}.  We begin by giving $v$ colours $3,\ldots,c-1,0,1$ in turn, which creates a monochromatic component containing all of $V_{c+1} \cup \cdots \cup V_{2c+1}$ except for vertices in $V_{c+2}$ having colour $2$.  We play a further $c-1$ moves in this component, giving it colours $0,c-1,\ldots,2$ in turn, which creates a monochromatic component containing all of $V_2 \cup \cdots \cup V_{2c+2}$.  Finally, we give this component colours $3,\ldots,c-1,0,1$ in turn, which floods the remainder of the graph.  Thus we can flood $G$ with a total of $c-1 + c - 1 + c - 1 = 3c - 3 = t - 4$ moves, as required.

\begin{figure}[h]
\centering
\begin{tikzpicture}[scale=.45]
\draw (0,0) ellipse (.8 and 1.5); \draw node at (0,0) {\scriptsize $1$};  \draw node at (0,2) {\scriptsize $V_{1}$};
\draw (.66,1)--(1.33,1)--(.72,.5)--(1.28,.5)--(.8,0)--(1.2,0)--(.72,-.5)--(1.25,-.5)--(.66,-1)--(1.33,-1);
\draw (1.33,1)--(.66,1)--(1.28,.5)--(.72,.5)--(1.2,0)--(.8,0)--(1.28,-.5)--(.72,-.5)--(1.33,-1)--(.66,-1);
\draw (2,0) ellipse (.8 and 1.5); \draw node at (2,0) {\scriptsize $2$};  \draw node at (2,2) {\scriptsize $V_{2}$};
\draw (2.66,1)--(3.33,1)--(2.72,.5)--(3.28,.5)--(2.8,0)--(3.2,0)--(2.72,-.5)--(3.25,-.5)--(2.66,-1)--(3.33,-1);
\draw (3.33,1)--(2.66,1)--(3.28,.5)--(2.72,.5)--(3.2,0)--(2.8,0)--(3.28,-.5)--(2.72,-.5)--(3.33,-1)--(2.66,-1);
\draw node at (4,0) {\scriptsize $\dots$};
\draw (4.66,1)--(5.33,1)--(4.72,.5)--(5.28,.5)--(4.8,0)--(5.2,0)--(4.72,-.5)--(5.25,-.5)--(4.66,-1)--(5.33,-1);
\draw (5.33,1)--(4.66,1)--(5.28,.5)--(4.72,.5)--(5.2,0)--(4.8,0)--(5.28,-.5)--(4.72,-.5)--(5.33,-1)--(4.66,-1);
\draw (6,0) ellipse (.8 and 1.5); \draw node at (6,0) {\tiny $c-1$};  \draw node at (6,2) {\scriptsize $V_{c-1}$};
\draw (6.66,1)--(7.33,1)--(6.72,.5)--(7.28,.5)--(6.8,0)--(7.2,0)--(6.72,-.5)--(7.25,-.5)--(6.66,-1)--(7.33,-1);
\draw (7.33,1)--(6.66,1)--(7.28,.5)--(6.72,.5)--(7.2,0)--(6.8,0)--(7.28,-.5)--(6.72,-.5)--(7.33,-1)--(6.66,-1);
\draw (8,0) ellipse (.8 and 1.5); \draw node at (8,0) {\scriptsize $0$};  \draw node at (8,2) {\scriptsize $V_{c}$};
\draw (8.66,1)--(9.33,1)--(8.72,.5)--(9.28,.5)--(8.8,0)--(9.2,0)--(8.72,-.5)--(9.25,-.5)--(8.66,-1)--(9.33,-1);
\draw (9.33,1)--(8.66,1)--(9.28,.5)--(8.72,.5)--(9.2,0)--(8.8,0)--(9.28,-.5)--(8.72,-.5)--(9.33,-1)--(8.66,-1);
\draw (10,0) ellipse (.8 and 1.5); \draw node at (10,0) {\scriptsize $1$};  \draw node at (10,2) {\scriptsize $V_{c+1}$};
\draw (10.66,1)--(11.33,1)--(10.72,.5)--(11.28,.5)--(10.8,0)--(11.2,0)--(10.72,-.5)--(11.25,-.5)--(10.66,-1)--(11.33,-1);
\draw (11.33,1)--(10.66,1)--(11.28,.5)--(10.72,.5)--(11.2,0)--(10.8,0)--(11.28,-.5)--(10.72,-.5)--(11.33,-1)--(10.66,-1);
\draw (12,0) ellipse (.8 and 1.5); \draw node at (12,.75) {\scriptsize $2$};  \draw node at (12,2) {\scriptsize $V_{c+2}$};
\draw[fill] (12,-1) circle (0.1); \draw node at (12,-.5) {\scriptsize $v$};
\draw (12.66,1)--(13.33,1)--(12.72,.5)--(13.28,.5)--(12.8,0)--(13.2,0)--(12.72,-.5)--(13.25,-.5)--(12.66,-1)--(13.33,-1);
\draw (13.33,1)--(12.66,1)--(13.28,.5)--(12.72,.5)--(13.2,0)--(12.8,0)--(13.28,-.5)--(12.72,-.5)--(13.33,-1)--(12.66,-1);
\draw node at (14,0) {\scriptsize $\dots$};
\draw (14.66,1)--(15.33,1)--(14.72,.5)--(15.28,.5)--(14.8,0)--(15.2,0)--(14.72,-.5)--(15.25,-.5)--(14.66,-1)--(15.33,-1);
\draw (15.33,1)--(14.66,1)--(15.28,.5)--(14.72,.5)--(15.2,0)--(14.8,0)--(15.28,-.5)--(14.72,-.5)--(15.33,-1)--(14.66,-1);
\draw (16,0) ellipse (.8 and 1.5); \draw node at (16,0) {\tiny $c-1$};  \draw node at (16,2) {\scriptsize $V_{2c-1}$};
\draw (16.66,1)--(17.33,1)--(16.72,.5)--(17.28,.5)--(16.8,0)--(17.2,0)--(16.72,-.5)--(17.25,-.5)--(16.66,-1)--(17.33,-1);
\draw (17.33,1)--(16.66,1)--(17.28,.5)--(16.72,.5)--(17.2,0)--(16.8,0)--(17.28,-.5)--(16.72,-.5)--(17.33,-1)--(16.66,-1);
\draw (18,0) ellipse (.8 and 1.5); \draw node at (18,0) {\scriptsize $0$};  \draw node at (18,2) {\scriptsize $V_{2c}$};
\draw (18.66,1)--(19.33,1)--(18.72,.5)--(19.28,.5)--(18.8,0)--(19.2,0)--(18.72,-.5)--(19.25,-.5)--(18.66,-1)--(19.33,-1);
\draw (19.33,1)--(18.66,1)--(19.28,.5)--(18.72,.5)--(19.2,0)--(18.8,0)--(19.28,-.5)--(18.72,-.5)--(19.33,-1)--(18.66,-1);
\draw (20,0) ellipse (.8 and 1.5); \draw node at (20,0) {\scriptsize $1$};  \draw node at (20,2) {\scriptsize $V_{2c+1}$};
\draw (20.66,1)--(21.33,1)--(20.72,.5)--(21.28,.5)--(20.8,0)--(21.2,0)--(20.72,-.5)--(21.25,-.5)--(20.66,-1)--(21.33,-1);
\draw (21.33,1)--(20.66,1)--(21.28,.5)--(20.72,.5)--(21.2,0)--(20.8,0)--(21.28,-.5)--(20.72,-.5)--(21.33,-1)--(20.66,-1);
\draw (22,0) ellipse (.8 and 1.5); \draw node at (22,0) {\scriptsize $2$};  \draw node at (22,2) {\scriptsize $V_{2c+2}$};
\draw (22.66,1)--(23.33,1)--(22.72,.5)--(23.28,.5)--(22.8,0)--(23.2,0)--(22.72,-.5)--(23.25,-.5)--(22.66,-1)--(23.33,-1);
\draw (23.33,1)--(22.66,1)--(23.28,.5)--(22.72,.5)--(23.2,0)--(22.8,0)--(23.28,-.5)--(22.72,-.5)--(23.33,-1)--(22.66,-1);
\draw node at (24,0) {\scriptsize $\dots$};
\draw (24.66,1)--(25.33,1)--(24.72,.5)--(25.28,.5)--(24.8,0)--(25.2,0)--(24.72,-.5)--(25.25,-.5)--(24.66,-1)--(25.33,-1);
\draw (25.33,1)--(24.66,1)--(25.28,.5)--(24.72,.5)--(25.2,0)--(24.8,0)--(25.28,-.5)--(24.72,-.5)--(25.33,-1)--(24.66,-1);
\draw (26,0) ellipse (.8 and 1.5); \draw node at (26,0) {\tiny $c-1$};  \draw node at (26,2) {\scriptsize $V_{3c-1}$};
\draw (26.66,1)--(27.33,1)--(26.72,.5)--(27.28,.5)--(26.8,0)--(27.2,0)--(26.72,-.5)--(27.25,-.5)--(26.66,-1)--(27.33,-1);
\draw (27.33,1)--(26.66,1)--(27.28,.5)--(26.72,.5)--(27.2,0)--(26.8,0)--(27.28,-.5)--(26.72,-.5)--(27.33,-1)--(26.66,-1);
\draw (28,0) ellipse (.8 and 1.5); \draw node at (28,0) {\scriptsize $0$};  \draw node at (28,2) {\scriptsize $V_{3c}$};
\draw (28.66,1)--(29.33,1)--(28.72,.5)--(29.28,.5)--(28.8,0)--(29.2,0)--(28.72,-.5)--(29.25,-.5)--(28.66,-1)--(29.33,-1);
\draw (29.33,1)--(28.66,1)--(29.28,.5)--(28.72,.5)--(29.2,0)--(28.8,0)--(29.28,-.5)--(28.72,-.5)--(29.33,-1)--(28.66,-1);
\draw (30,0) ellipse (.8 and 1.5); \draw node at (30,0) {\scriptsize $1$};  \draw node at (30,2) {\scriptsize $V_{3c+1}$};
\draw [decorate,decoration={brace,amplitude=10, mirror},xshift=0,yshift=0] (9,-1.5) -- (21,-1.5) node [black,midway,yshift=-15] {\scriptsize $c-1$ moves};
\draw [decorate,decoration={brace,amplitude=10, mirror},xshift=0,yshift=0] (1,-2.75) -- (23,-2.75) node [black,midway,yshift=-15] {\scriptsize $c-1$ moves};
\draw [decorate,decoration={brace,amplitude=10, mirror},xshift=0,yshift=0] (-0.9,-4) -- (31,-4) node [black,midway,yshift=-15] {\scriptsize $c-1$ moves};
\draw[dashed] (-1,1.7)--(-1,-4); \draw[dashed] (1,1.7)--(1,-2.75); \draw[dashed] (9,1.7)--(9,-1.5); \draw[dashed] (21,1.7)--(21,-1.5); \draw[dashed] (23,1.7)--(23,-2.75); \draw[dashed] (31,1.7)--(31,-4);
\end{tikzpicture}

\caption{The third base case for Lemma \ref{rainbow-blowup}}
\label{base-case-3}
\end{figure}
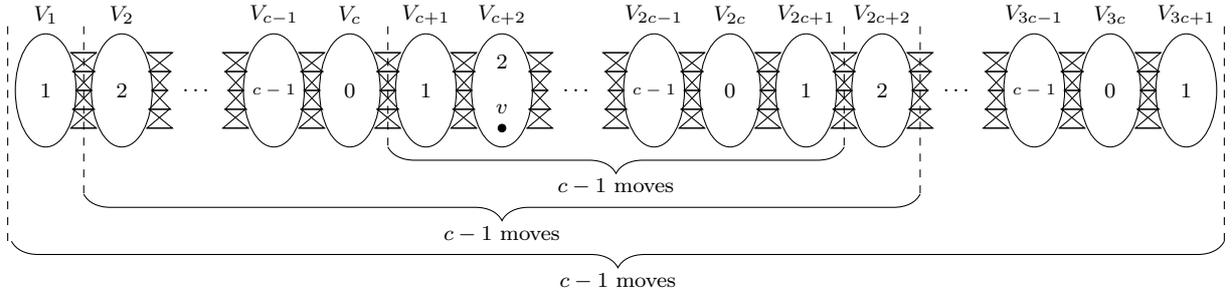

From now on, therefore, we may assume that $t \geq 3c + 2$, and that the result holds for any graph that is a blow-up of $P_s$ for $s < t$.  Using a similar strategy to that described in the second base case above, we can play $2c - 2$ moves which create a monochromatic component in $G$ containing all of $V_1 \cup \cdots \cup V_{2c + 1}$.  To achieve this, we play $2c-1$ moves at a vertex $v \in V_{c+1}$: we give this vertex colours $2,\ldots,c-1,0$, followed by $c-1,\ldots,2,1$.  Note that the resulting monochromatic component ends up with colour $1$, and that playing the sequence of moves described above in $G$ will not change the colour of any vertex outside $V_1 \cup \cdots \cup V_{2c+1}$.  Thus, after playing this sequence, the resulting coloured graph is equivalent to a graph $G'$ with colouring $\omega'$, where $G'$ is a blow-up of the path $P_{t-2c}$ and $\omega'$ is a $C$-rainbow colouring of $G'$.  Note that $t - 2c \geq c + 2$, so we can apply the inductive hypothesis to see that 
$$m(G',\omega') \leq t - 2c - \left \lceil \frac{t-2c}{c} \right \rceil = t - 2c + 2 - \left \lceil \frac{t}{c} \right \rceil.$$
This implies that 
$$m(G,\omega) \leq 2c - 2 + t - 2c + 2 - \left \lceil \frac{t}{c} \right \rceil = t - \left \lceil \frac{t}{c} \right \rceil,$$
as required.
\end{proof}

\subsection{General path colourings}
\label{path-col}

Before proving that the upper bound is also valid for path blow-ups with any path colouring (provided that the path is sufficiently long compared with the number of colours), we need some auxiliary results.  First of all, it is straightforward to verify the following characterisation of path colourings that are \emph{not} rainbow colourings.

\begin{prop}
Let $G$ be a blow-up of the path $P_t$, let $f: \{1,\ldots,t\} \rightarrow \{0,\ldots,c-1\}$ be any function and $C = \{d_0,\ldots,d_{c-1}\}$ a set of colours, and let $\omega$ be defined by setting $\omega(u) = d_{f(i)}$ for all $u \in V_i$ (for $1 \leq i \leq t$).  If $\omega$ is not a $C$-rainbow colouring of $G$, then there exists $1 \leq i < j \leq t$ such that $j - i < c$ and $f(i) = f(j)$.
\label{not-rainbow-col}
\end{prop}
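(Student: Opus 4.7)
My plan is to argue by contrapositive: assume that no two indices $1 \le i < j \le t$ with $f(i) = f(j)$ satisfy $j - i < c$, and deduce that $\omega$ must then be a $C$-rainbow colouring of $G$.

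The central observation is that under this assumption, every window of $c$ consecutive indices $i, i+1, \ldots, i+c-1 \in \{1,\ldots,t\}$ yields $c$ pairwise distinct values of $f$, hence a full permutation of $\{0,\ldots,c-1\}$. I would use this in two steps. First, the special case of windows of length two gives $f(i) \ne f(i+1)$ for every valid $i$, so $\omega$ is a proper colouring of $G$ (recall that rainbow colourings are required to be proper). Second, comparing the two overlapping windows starting at $i$ and $i+1$ (which share the $c-1$ common values $f(i+1),\ldots,f(i+c-1)$) forces the two ``missing'' values to coincide, giving $f(i) = f(i+c)$ whenever $i+c \le t$. Thus $f$ is periodic of period $c$.

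With periodicity in hand, I would define the candidate permutation $\pi: \{0,\ldots,c-1\} \to \{0,\ldots,c-1\}$ by $\pi(i \bmod c) = f(i)$ for $1 \le i \le c$ (so $\pi(0) = f(c)$ and $\pi(j) = f(j)$ for $1 \le j \le c-1$). The window observation applied to $\{1,\ldots,c\}$ makes $\pi$ a permutation, and periodicity then extends $f(i) = \pi(i \bmod c)$ to all $1 \le i \le t$, exhibiting $\omega$ as a $C$-rainbow colouring. The only edge case is $t < c$, where the periodicity step is vacuous but the $f$-values are all distinct by hypothesis, so $\pi$ can simply be completed arbitrarily on the unused residues.

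I do not expect a real obstacle here: the statement is essentially a direct combinatorial unravelling of the rainbow definition, and the argument just formalises the intuition that forbidding near-repeats in $f$ forces strict period-$c$ cycling. The only mild subtlety is bookkeeping the overlap of windows at the endpoints and handling the small-$t$ case so that the constructed $\pi$ is genuinely a permutation.
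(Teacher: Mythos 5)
Your argument is correct: the overlapping-windows observation (each block of $c$ consecutive indices is a permutation of $\{0,\ldots,c-1\}$, whence $f(i)=f(i+c)$) is exactly the ``straightforward verification'' the paper alludes to, as it states this proposition without proof. The only caveat, which the paper shares, is the degenerate case $c=1$, where the statement is vacuously false but irrelevant to how the proposition is used.
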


We also need another result relating the number of moves required to flood a path and a collection of subpaths.

\begin{lma}
Let $P$ be a path on $t$ vertices with colouring $\omega$ from colour-set $C$, where $|C| = c$, and let $Q_1,\ldots,Q_r$ be a collection of disjoint subpaths of $P$.  Then
$$m(P,\omega) \leq t - \sum_{i=1}^r(|Q_i| - 1) - \left \lceil \frac{t - \sum_{i=1}^r(|Q_i| - 1)}{c} \right \rceil + \sum_{i=1}^r m(Q_i,\omega).$$
\label{path-section}
\end{lma}
\begin{proof}
For each $1 \leq i \leq r$, fix $d_i \in C$ such that $m(Q_i,\omega) = m(Q_i,\omega,d)$.  We now define a new colouring $\omega'$ of $P$ by setting
\begin{equation*}
\omega'(v) = \begin{cases}
						d_i			& \text{if $v \in Q_i$} \\
						\omega(v)	& \text{otherwise.} 
			 \end{cases}
\end{equation*}
Observe that $P$ with colouring $\omega'$ is equivalent to a path on at most $t - \sum_{i=1}^r (|Q_i| - 1)$ vertices, so by Corollary \ref{path-result} we have 
$$m(P,\omega') \leq t - \sum_{i=1}^r(|Q_i| - 1) - \left\lceil \frac{t - \sum_{i=1}^r(|Q_i| - 1)}{c} \right\rceil.$$
Let $\mathcal{A}$ be the set of maximal monochromatic components of $P$ with respect to $\omega'$, where each $A \in \mathcal{A}$ has colour $d_A$ under $\omega'$.  Then, by Lemma \ref{change-colouring}, we have
\begin{align*}
m(P,\omega) & \leq m(P,\omega') + \sum_{A \in \mathcal{A}} m(A,\omega,d_A) \\
			& \leq t - \sum_{i=1}^r(|Q_i| - 1) - \left\lceil{ \frac{t - \sum_{i=1}^r(|Q_i| - 1)}{c} }\right\rceil + \sum_{A \in \mathcal{A}} m(A,\omega,d_A).
\end{align*}
So it remains to check that $\sum_{A \in \mathcal{A}} m(A,\omega,d_A) \leq \sum_{i=1}^r m(Q_i,\omega)$.  Note that it is possible that more than one of the subpaths $Q_1,\ldots,Q_r$ belongs to the same maximal monochromatic component with respect to $\omega'$; suppose that $A_1,\ldots,A_s$ are the elements of $\mathcal{A}$ that contain at least one subpath $Q_i$, and observe therefore that $s \leq r$.  Moreover, it is clear that, for each $1 \leq i \leq s$, 
$$m(A_i,\omega,d_{A_i}) \leq \sum_{Q_j \subseteq A_i} m(Q_j,\omega,d_j) = \sum_{Q_j \subseteq A_i} m(Q_j,\omega).$$
Observe also that, for $A \in \mathcal{A}$ with $A \notin \{A_1,\ldots,A_s\}$, $A$ is also a monochromatic component of $P$ with respect to $\omega$, so we have $m(A,\omega,d_A) = 0$. Hence, as no subpath $Q_j$ belongs to more than one monochromatic component $A_i$, we have 
$$\sum_{A \in \mathcal{A}} m(A,\omega,d_A) \leq \sum_{i=1}^s \sum_{Q_j \subseteq A_i} m(Q_i,\omega) = \sum_{i=1}^r m(Q_i,\omega),$$
completing the proof.
\end{proof}

We now use these auxiliary results to extend our upper bound to cover all initial colourings that are path colourings.  The key idea of the proof is to define a quantity that captures in a sense how far away the initial colouring is from a rainbow colouring.  If the initial colouring is sufficiently different from a rainbow colouring, we can argue that we must be able to create a monochromatic end-to-end path significantly more quickly than in the rainbow case, meaning that we are then able to flood any remaining vertices greedily.  In the event that the colouring does not differ so much from a rainbow colouring, we demonstrate how we may perform a sequence of flooding moves that is not too long and which results in a rainbow-coloured path blow-up, allowing us to apply the previous result.

\begin{lma}
Let $G$ be a blow-up of the path $P_t$, let $c \geq 3$ and let $f: \{1,\ldots,t\} \rightarrow \{1,\ldots,c\}$ be any function, and let $\omega$ be defined by setting $\omega(u) = f(i)$ for all $u \in V_i$ (for $1 \leq i \leq t$).  Then, if $t \geq 2c^2(c-1)^3$,
$$m(G,\omega) \leq t - \left \lceil \frac{t}{c} \right \rceil.$$
\label{path-colouring}
\end{lma}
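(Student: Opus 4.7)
The plan is to follow the two-phase strategy sketched in the paragraph preceding the statement, splitting on whether $\omega$ is ``close to'' or ``far from'' a shifted rainbow path colouring, using the maximum colour frequency $N := \max_{d} |\{i : f(i) = d\}|$ as the proxy for distance. After a routine reduction (contract any adjacent classes $V_i, V_{i+1}$ with $f(i) = f(i+1)$ to reduce to the case $f(i) \neq f(i+1)$, using that the bound $s - \lceil s/c \rceil$ is non-decreasing in $s$), the argument splits into two cases.

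In the first case, suppose $N \geq \lceil t/c \rceil + c - 1$. Fix a transversal sub-path $Q$ of $G$ containing exactly one vertex from each class $V_i$; the colouring induced on $Q$ has the same colour-count profile as the sequence $f$, so its dominant colour still appears $N$ times. By Lemma \ref{max-colour-class} applied inside $Q$, $m_Q(Q,\omega) \leq t - N$. Combining with Proposition \ref{dominating-path} yields
$$m(G,\omega) \leq m_Q(Q,\omega) + (c-1) \leq t - N + (c-1) \leq t - \lceil t/c \rceil,$$
which is the required bound. This is the ``greedy flood after creating a dominating monochromatic path'' regime mentioned just before the statement.

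In the second case, $N \leq \lceil t/c \rceil + c - 2$, so every colour appears within roughly $c^2$ of $t/c$ times. Here I would imitate the inductive absorption strategy from the proof of Lemma \ref{rainbow-blowup}: at each step, locate a window of $2c+1$ consecutive classes on which the induced sequence is rainbow, play $2c-2$ moves at a central class of that window to absorb it into a single monochromatic component (saving two moves over the naive ``one move per absorbed class'' budget), and induct on the blow-up of $P_{t-2c}$ obtained by contracting the absorbed window into a single class. The combinatorial heart of this case is the claim that, under the balance hypothesis together with $t \geq 2c^2(c-1)^3$, such a rainbow window of length $2c+1$ must exist somewhere in $f$: by Proposition \ref{not-rainbow-col} the obstruction to local rainbow-ness is a short repeat $(i,j)$ with $j-i<c$ and $f(i)=f(j)$, and one shows that each short repeat charges a definite amount to the imbalance budget $\sum_d |N_d - t/c|$. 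The balance hypothesis caps that budget at roughly $c^3$, so short repeats are sparse enough that a run of at least $2c+1$ consecutive short-repeat-free positions must exist by pigeonhole.

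The main obstacle, and the reason for the polynomial hypothesis $t \geq 2c^2(c-1)^3$, is making the bookkeeping in Case B tight enough to finish with the exact bound $t - \lceil t/c \rceil$ rather than with additive slack of order $c^3$. Each absorption round saves only $O(1)$ moves over the naive strategy, so the amortisation requires the inductive process to run for polynomially many rounds in $c$; the generous size threshold guarantees this, but tracking the boundary interactions between rounds (in particular, verifying that after contracting an absorbed window the residual path colouring is still balanced enough to reapply the rainbow-window claim, and that the short repeats near the window's boundary do not disrupt the absorption round itself) is the delicate part of the argument. Lemma \ref{change-colouring} should be the natural tool for absorbing any residual discrepancy between the induced colouring and the rainbow scaffold.
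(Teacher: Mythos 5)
Your Case A is fine: if some colour appears at least $\lceil t/c \rceil + c - 1$ times in the sequence $f$, then Lemma \ref{max-colour-class} applied to a transversal path $Q$ together with Proposition \ref{dominating-path} gives the bound exactly as you say. The problem is Case B, whose combinatorial heart is false. Colour-frequency balance does \emph{not} imply the existence of a rainbow window of length $2c+1$, because the imbalance budget $\sum_d |N_d - t/c|$ does not control the number of short repeats. Concretely, take $c=3$ and let $f$ be periodic with period $6$ and pattern $1,2,3,1,3,2$. Adjacent entries always differ, every colour appears exactly $t/3$ times (so $N = \lceil t/c\rceil$ and you are squarely in Case B), yet a window of length $7$ is rainbow only if four consecutive distance-$3$ pairs $(f(i),f(i+3))$ agree, and in this sequence only the pairs with $i \equiv 1,4 \pmod 6$ agree. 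No rainbow window of length $2c+1$ (or even $c+2$) exists anywhere, so your absorption process cannot start, and your two cases do not cover all colourings. A perfectly balanced colouring can have $\Theta(t)$ short repeats.

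The underlying issue is that you have the difficulty inverted: short repeats make the colouring \emph{easier}, not harder. Each position $i<j$ with $j-i<c$ and $f(i)=f(j)$ lets you flood the corresponding segment of the transversal path with one move fewer than its length would suggest (Lemma \ref{max-colour-class}), and these savings can be aggregated along $Q$ via Lemma \ref{path-section}. The paper therefore splits not on colour frequency but on the number of maximal rainbow segments in a greedy left-to-right decomposition of $f$: if there are more than $2c(c-1)$ segments, the $\Omega(c^2)$ boundary repeats yield a saving of at least $c-1$ moves on $Q$, which pays for the $c-1$ extra moves in Proposition \ref{dominating-path}; if there are few segments, one of them is long, and the proof inducts on the number of segments by carefully merging two adjacent rainbow segments at their boundary (this merging, not window absorption, is where the bookkeeping and the hypothesis $t \geq 2c^2(c-1)^3$ are spent). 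My period-$6$ example falls into the many-segments case and is handled by the savings argument, which your proposal never invokes. To repair your proof you would need to replace the Case B dichotomy and absorption scheme with something along these lines; as written, the argument does not go through.
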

\begin{proof}
Without loss of generality, we may assume that, for every $1 \leq x < t$, $f(x) \neq f(x+1)$, as otherwise we could contract all vertices of $V_x \cup V_{x+1}$ to a single vertex, obtaining an equivalent coloured graph which is a blow-up of a path on $t - 1$ vertices.  Now observe that, for any path colouring $\omega$ of $G$, the graph $G$ can be decomposed into subgraphs $G_1,\ldots,G_r$, where each $G_i$ is a blow-up of the path $P_{t_i}$ and $\sum_{i=1}^r t_i = t$, in such a way that $\omega$ is a $C$-rainbow colouring of $G_i$ for each $1 \leq i \leq r$; it is clear that such a decomposition must exist since setting $G_i = G[V_i]$ for $1 \leq i \leq t$ will do.

A more meaningful decomposition with the required properties can be constructed greedily, as illustrated in Figure \ref{greedy-decomp}.  We set $s_1 = 1$ and choose $t_1$ to be the largest integer such that $\omega$ is a rainbow colouring of $G[V_1 \cup \cdots \cup V_{t_1}]$; given $t_1,\ldots,t_i$, we set $s_{i+1} = 1 + \sum_{j=1}^{i} t_j$ and choose $t_{i+1}$ to be the largest integer such that $\omega$ is a rainbow colouring of $G[V_{s_{i+1}} \cup \cdots \cup V_{s_{i+1} + t_{i+1} - 1}]$.  We call the decomposition constructed in this way the \emph{greedy rainbow decomposition of $(G,\omega)$}, and denote by $\grd(G,\omega)$ the collection of subgraphs in this decomposition.  Then $\grd(G,\omega) = \{G_1,\ldots,G_r\}$ (for some $r \geq 1$), where $G_i = G[V_{s_i} \cup V_{s_i+1} \cup \cdots \cup V_{s_{i+1} - 1}]$, and the greedy construction guarantees that, for each $1 \leq i \leq r = |\grd(G,\omega)|$, there exists $x_i$ with $\max\{s_i,s_{i+1} - c + 1\} \leq x_i \leq s_{i+1} - 1$ such that $f(x_i) = f(s_{i+1})$.

\begin{figure}[h]
\centering
\begin{tikzpicture}[scale=.42]
\draw (0,0) ellipse (.8 and 1.5); \draw node at (0,0) {\scriptsize $1$}; \draw node at (0,2) {\scriptsize $V_{1}$};
\draw (.66,1)--(1.33,1)--(.72,.5)--(1.28,.5)--(.8,0)--(1.2,0)--(.72,-.5)--(1.25,-.5)--(.66,-1)--(1.33,-1);
\draw (1.33,1)--(.66,1)--(1.28,.5)--(.72,.5)--(1.2,0)--(.8,0)--(1.28,-.5)--(.72,-.5)--(1.33,-1)--(.66,-1);
\draw (2,0) ellipse (.8 and 1.5); \draw node at (2,0) {\scriptsize $2$}; \draw node at (2,2) {\scriptsize $V_{2}$};
\draw (2.66,1)--(3.33,1)--(2.72,.5)--(3.28,.5)--(2.8,0)--(3.2,0)--(2.72,-.5)--(3.25,-.5)--(2.66,-1)--(3.33,-1);
\draw (3.33,1)--(2.66,1)--(3.28,.5)--(2.72,.5)--(3.2,0)--(2.8,0)--(3.28,-.5)--(2.72,-.5)--(3.33,-1)--(2.66,-1);
\draw (4,0) ellipse (.8 and 1.5); \draw node at (4,0) {\scriptsize $3$}; \draw node at (4,2) {\scriptsize $V_{3}$};
\draw (4.66,1)--(5.33,1)--(4.72,.5)--(5.28,.5)--(4.8,0)--(5.2,0)--(4.72,-.5)--(5.25,-.5)--(4.66,-1)--(5.33,-1);
\draw (5.33,1)--(4.66,1)--(5.28,.5)--(4.72,.5)--(5.2,0)--(4.8,0)--(5.28,-.5)--(4.72,-.5)--(5.33,-1)--(4.66,-1);
\draw (6,0) ellipse (.8 and 1.5); \draw node at (6,0) {\scriptsize $2$}; \draw node at (6,2) {\scriptsize $V_{4}$};
\draw (6.66,1)--(7.33,1)--(6.72,.5)--(7.28,.5)--(6.8,0)--(7.2,0)--(6.72,-.5)--(7.25,-.5)--(6.66,-1)--(7.33,-1);
\draw (7.33,1)--(6.66,1)--(7.28,.5)--(6.72,.5)--(7.2,0)--(6.8,0)--(7.28,-.5)--(6.72,-.5)--(7.33,-1)--(6.66,-1);
\draw (8,0) ellipse (.8 and 1.5); \draw node at (8,0) {\scriptsize $3$}; \draw node at (8,2) {\scriptsize $V_{5}$};
\draw (8.66,1)--(9.33,1)--(8.72,.5)--(9.28,.5)--(8.8,0)--(9.2,0)--(8.72,-.5)--(9.25,-.5)--(8.66,-1)--(9.33,-1);
\draw (9.33,1)--(8.66,1)--(9.28,.5)--(8.72,.5)--(9.2,0)--(8.8,0)--(9.28,-.5)--(8.72,-.5)--(9.33,-1)--(8.66,-1);
\draw (10,0) ellipse (.8 and 1.5); \draw node at (10,0) {\scriptsize $4$}; \draw node at (10,2) {\scriptsize $V_{6}$};
\draw (10.66,1)--(11.33,1)--(10.72,.5)--(11.28,.5)--(10.8,0)--(11.2,0)--(10.72,-.5)--(11.25,-.5)--(10.66,-1)--(11.33,-1);
\draw (11.33,1)--(10.66,1)--(11.28,.5)--(10.72,.5)--(11.2,0)--(10.8,0)--(11.28,-.5)--(10.72,-.5)--(11.33,-1)--(10.66,-1);
\draw (12,0) ellipse (.8 and 1.5); \draw node at (12,0) {\scriptsize $5$}; \draw node at (12,2) {\scriptsize $V_{7}$};
\draw (12.66,1)--(13.33,1)--(12.72,.5)--(13.28,.5)--(12.8,0)--(13.2,0)--(12.72,-.5)--(13.25,-.5)--(12.66,-1)--(13.33,-1);
\draw (13.33,1)--(12.66,1)--(13.28,.5)--(12.72,.5)--(13.2,0)--(12.8,0)--(13.28,-.5)--(12.72,-.5)--(13.33,-1)--(12.66,-1);
\draw (14,0) ellipse (.8 and 1.5); \draw node at (14,0) {\scriptsize $1$}; \draw node at (14,2) {\scriptsize $V_{8}$};
\draw (14.66,1)--(15.33,1)--(14.72,.5)--(15.28,.5)--(14.8,0)--(15.2,0)--(14.72,-.5)--(15.25,-.5)--(14.66,-1)--(15.33,-1);
\draw (15.33,1)--(14.66,1)--(15.28,.5)--(14.72,.5)--(15.2,0)--(14.8,0)--(15.28,-.5)--(14.72,-.5)--(15.33,-1)--(14.66,-1);
\draw (16,0) ellipse (.8 and 1.5); \draw node at (16,0) {\scriptsize $2$}; \draw node at (16,2) {\scriptsize $V_{9}$};
\draw (16.66,1)--(17.33,1)--(16.72,.5)--(17.28,.5)--(16.8,0)--(17.2,0)--(16.72,-.5)--(17.25,-.5)--(16.66,-1)--(17.33,-1);
\draw (17.33,1)--(16.66,1)--(17.28,.5)--(16.72,.5)--(17.2,0)--(16.8,0)--(17.28,-.5)--(16.72,-.5)--(17.33,-1)--(16.66,-1);
\draw (18,0) ellipse (.8 and 1.5); \draw node at (18,0) {\scriptsize $3$}; \draw node at (18,2) {\scriptsize $V_{10}$};
\draw (18.66,1)--(19.33,1)--(18.72,.5)--(19.28,.5)--(18.8,0)--(19.2,0)--(18.72,-.5)--(19.25,-.5)--(18.66,-1)--(19.33,-1);
\draw (19.33,1)--(18.66,1)--(19.28,.5)--(18.72,.5)--(19.2,0)--(18.8,0)--(19.28,-.5)--(18.72,-.5)--(19.33,-1)--(18.66,-1);
\draw (20,0) ellipse (.8 and 1.5); \draw node at (20,0) {\scriptsize $4$}; \draw node at (20,2) {\scriptsize $V_{11}$};
\draw (20.66,1)--(21.33,1)--(20.72,.5)--(21.28,.5)--(20.8,0)--(21.2,0)--(20.72,-.5)--(21.25,-.5)--(20.66,-1)--(21.33,-1);
\draw (21.33,1)--(20.66,1)--(21.28,.5)--(20.72,.5)--(21.2,0)--(20.8,0)--(21.28,-.5)--(20.72,-.5)--(21.33,-1)--(20.66,-1);
\draw (22,0) ellipse (.8 and 1.5); \draw node at (22,0) {\scriptsize $2$}; \draw node at (22,2) {\scriptsize $V_{12}$};
\draw (22.66,1)--(23.33,1)--(22.72,.5)--(23.28,.5)--(22.8,0)--(23.2,0)--(22.72,-.5)--(23.25,-.5)--(22.66,-1)--(23.33,-1);
\draw (23.33,1)--(22.66,1)--(23.28,.5)--(22.72,.5)--(23.2,0)--(22.8,0)--(23.28,-.5)--(22.72,-.5)--(23.33,-1)--(22.66,-1);
\draw (24,0) ellipse (.8 and 1.5); \draw node at (24,0) {\scriptsize $4$}; \draw node at (24,2) {\scriptsize $V_{13}$};
\draw (24.66,1)--(25.33,1)--(24.72,.5)--(25.28,.5)--(24.8,0)--(25.2,0)--(24.72,-.5)--(25.25,-.5)--(24.66,-1)--(25.33,-1);
\draw (25.33,1)--(24.66,1)--(25.28,.5)--(24.72,.5)--(25.2,0)--(24.8,0)--(25.28,-.5)--(24.72,-.5)--(25.33,-1)--(24.66,-1);
\draw (26,0) ellipse (.8 and 1.5); \draw node at (26,0) {\scriptsize $5$}; \draw node at (26,2) {\scriptsize $V_{14}$};
\draw (26.66,1)--(27.33,1)--(26.72,.5)--(27.28,.5)--(26.8,0)--(27.2,0)--(26.72,-.5)--(27.25,-.5)--(26.66,-1)--(27.33,-1);
\draw (27.33,1)--(26.66,1)--(27.28,.5)--(26.72,.5)--(27.2,0)--(26.8,0)--(27.28,-.5)--(26.72,-.5)--(27.33,-1)--(26.66,-1);
\draw (28,0) ellipse (.8 and 1.5); \draw node at (28,0) {\scriptsize $1$}; \draw node at (28,2) {\scriptsize $V_{15}$};
\draw (28.66,1)--(29.33,1)--(28.72,.5)--(29.28,.5)--(28.8,0)--(29.2,0)--(28.72,-.5)--(29.25,-.5)--(28.66,-1)--(29.33,-1);
\draw (29.33,1)--(28.66,1)--(29.28,.5)--(28.72,.5)--(29.2,0)--(28.8,0)--(29.28,-.5)--(28.72,-.5)--(29.33,-1)--(28.66,-1);
\draw (30,0) ellipse (.8 and 1.5); \draw node at (30,0) {\scriptsize $3$}; \draw node at (30,2) {\scriptsize $V_{16}$};
\draw (30.66,1)--(31.33,1)--(30.72,.5)--(31.28,.5)--(30.8,0)--(31.2,0)--(30.72,-.5)--(31.25,-.5)--(30.66,-1)--(31.33,-1);
\draw (31.33,1)--(30.66,1)--(31.28,.5)--(30.72,.5)--(31.2,0)--(30.8,0)--(31.28,-.5)--(30.72,-.5)--(31.33,-1)--(30.66,-1);
\draw (32,0) ellipse (.8 and 1.5); \draw node at (32,0) {\scriptsize $2$}; \draw node at (32,2) {\scriptsize $V_{17}$};
\draw [decorate,decoration={brace,amplitude=10, mirror},xshift=0,yshift=0] (-0.9,-1.5) -- (4.9,-1.5) node [black,midway,yshift=-15] {\scriptsize $G_1$};
\draw [decorate,decoration={brace,amplitude=10, mirror},xshift=0,yshift=0] (5.1,-1.5) -- (20.9,-1.5) node [black,midway,yshift=-15] {\scriptsize $G_2$};
\draw [decorate,decoration={brace,amplitude=10, mirror},xshift=0,yshift=0] (21.1,-1.5) -- (32.9,-1.5) node [black,midway,yshift=-15] {\scriptsize $G_3$};
\end{tikzpicture}
\caption{An example of a greedy rainbow decomposition}
\label{greedy-decomp}
\end{figure}
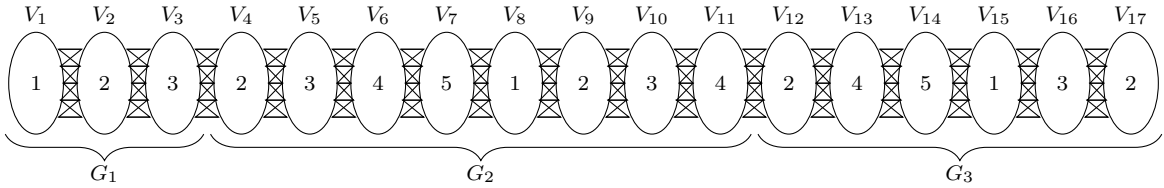

Suppose first that $|\grd(G,\omega)| > 2c(c-1)$.  We will argue that in this case we can flood $G$ in at most $m(G,\omega)$ moves by first creating a monochromatic end-to-end path and then cycling through any remaining colours.

Fix $Q$ to be any path that contains precisely one vertex from each vertex class $V_1,\ldots,V_t$.  Now, for each $1 \leq i \leq r-1$, set $Q_i$ to be the segment of $Q$ induced by \mbox{$Q \cap (V_{x_i} \cup \cdots \cup V_{s_{i+1}})$}.  Observe that for each $Q_i$, by definition of $x_i$, we have $|Q_i| \leq c$, and $m(Q_i,\omega,f(x_i)) \leq |Q_i| - 2$, by Proposition \ref{max-colour-class}.  We are not quite able to apply Lemma \ref{path-section}, as for any $i$ it is possible that $Q_i$ and $Q_{i+1}$ intersect in one vertex; however, it is clear that $Q_i \cap Q_{i+2} = \emptyset$ for any $i$, so it is certainly the case that $\{Q_{2i}: 1 \leq i \leq \left \lfloor \frac{|\grd(G,\omega)|}{2} \right \rfloor\}$ is a collection of disjoint subpaths of $Q$.  Setting $r = |\grd(G,\omega)|$, Lemma \ref{path-section} now tells us that
\begin{align*}
m(Q,\omega) & \leq t - \sum_{i=1}^{\left \lfloor \frac{r}{2} \right \rfloor} (|Q_{2i}| - 1) - \left \lceil \frac{t - \sum_{i=1}^{\left \lfloor \frac{r}{2} \right \rfloor}(|Q_{2i}| - 1)}{c} \right \rceil + \sum_{i=1}^{\left \lfloor \frac{r}{2} \right \rfloor} m(Q_{2i},\omega) \\
			   & \leq t - \left \lceil \frac{t}{c} \right \rceil - (c-1),
\end{align*}
since $|Q_i| \leq c$, $m(Q_{2i},\omega) \leq |Q_i| - 2$, and $r=|\grd(G,\omega)| \geq 2c(c-1)$.  Proposition \ref{dominating-path} then gives $m(G,\omega) \leq t - \left \lceil \frac{t}{c} \right \rceil$, as required.

So we may assume from now on that $|\grd(G,\omega)| \leq 2c(c-1)$.  In this case it suffices to prove the following claim, by our initial assumption that $t \geq 2c^2(c-1)^3$.

\begin{claim*}
If $t \geq c(c-1)^2|\grd(G,\omega)|$ then
$$m(G,\omega) \leq t - \left \lceil \frac{t}{c} \right \rceil.$$
\label{grd-induction}
\end{claim*}

The base case, for $|\grd(G,\omega)| = 1$, follows from Lemma \ref{rainbow-blowup} (since in this case $\omega$ must be a rainbow colouring of $G$), so we may assume from now on that $|\grd(G,\omega)| \geq 2$ and that the claim holds for any such graph $\widetilde{G}$ with colouring $\widetilde{\omega}$ such that $|\grd(\widetilde{G},\widetilde{\omega})| < |\grd(G,\omega)|$.

By our assumption that $t \geq c(c-1)^2|\grd(G,\omega)|$, there must be some $1 \leq i \leq |\grd(G,\omega)|$ such that $t_i \geq c(c-1)^2 > 2c$.  Note that we may assume without loss of generality that $i \neq |\grd(G,\omega)|$: if the only such subgraph in the decomposition is $G_{|\grd(G,\omega)|}$ then we can reverse the ordering of the vertex classes so that we instead have $t_1 > 2c$ after re-ordering (the subgraphs of the decomposition may not be the same as before, but our longest section can only increase in length in this new setting).  We will now consider the subgraph $H_0 = G[V(G_i) \cup V(G_{i+1})]$, where $\omega_0$ is the restriction of $\omega$ to $V(H_0)$.  

We will describe how to play a sequence of moves in $H_0$ which results in a rainbow colouring of this subgraph; if this does not decrease the length of the underlying path too much (when monochromatic components are contracted) we then invoke the inductive hypothesis, and otherwise we can complete the proof directly.  Specifically, we describe how to obtain a sequence of graphs $(H_j)_{j=1}^s$ (for some $s \geq 1$), with corresponding colourings $(\omega_j)_{j=1}^s$, with three key properties.  Note that $H_0$ itself is not included in this sequence; it does not have the second property listed below.

We denote by $U_1^{(0)},\ldots,U_{\ell_0}^{(0)}$ the vertex classes of $H_0$, where $\ell_0 = t_i + t_{i+1}$, and let $f_0: \{1,\ldots,\ell_0\} \rightarrow C$ be the function such that, for every $1 \leq z \leq \ell_0$, $\omega_0(u) = f_0(z)$ for each $u \in U_z^{(0)}$.  For each $j$, the coloured graph $(H_j,\omega_j)$ then has the following properties:
\begin{enumerate}
\item $H_j$ is a blow-up of a path $P_{\ell_j}$, where $\ell_j \geq \ell_0 - (j+1)(c-1) - 1$, and $\omega_j$ is a proper path colouring of $H_j$,
\item There exists $x_j \in \{1,\ldots,\ell_j\}$ such that, if the vertex classes of $H_j$ are $U_1^{(j)},\ldots,U_{\ell_j}^{(j)}$, then $|U_{x_j}^{(j)}| = 1$ and $\omega_j$ is a $C$-rainbow colouring of both $H_j[U_1^{(j)} \cup \cdots \cup U_{x_j}^{(j)}]$ and $H_j[U_{x_j}^{(j)} \cup \cdots \cup U_{\ell_j}^{(j)}]$ (as illustrated in Figure \ref{H_j-structure}), and
\item If the colouring $\omega_j'$ of $V(H_0)$ is defined by
\begin{equation*}
\omega_j'(v) = \begin{cases}
					\omega_0(v) 		& \text{if $v \in U_z^{(0)}$ and either $z < x_j$ or $z > \ell_0 - (\ell_j - x_j)$} \\
					\omega_j(x_j)		& \text{otherwise,}
				\end{cases}
\end{equation*}
then $H_0$ with colouring $\omega_j'$ is equivalent to $H_j$ with colouring $\omega_j$; moreover, if $\mathcal{A}_j$ is the set of maximal monochromatic components of $H_0$ with respect to $\omega_j'$, where each $A \in \mathcal{A}_j$ has colour $d_A^j$ under $\omega_j'$, then $\sum_{A \in \mathcal{A}_j} m(A,\omega_0,d_A^j) \leq \ell_0 - \ell_j - j - 1$.
\end{enumerate}

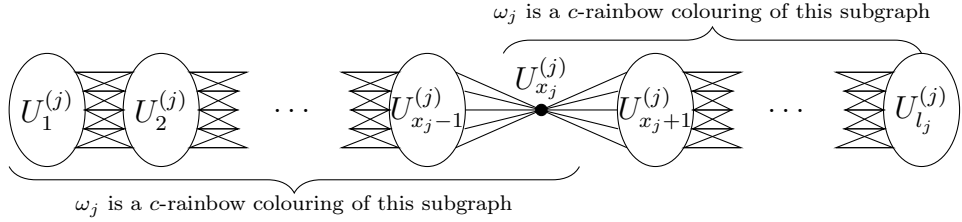
\begin{figure}[h]
\centering
\begin{tikzpicture}[scale=.5]
\draw (0,0) ellipse (1 and 1.5); \draw node at (0,0) {$U_1^{(j)}$};
\draw (.75,1)--(2.25,1)--(.98,.5)--(2.02,.5)--(1,0)--(2,0)--(.98,-.5)--(2.02,-.5)--(.75,-1)--(2.25,-1);
\draw (2.25,1)--(.75,1)--(2.02,.5)--(.98,.5)--(2,0)--(1,0)--(2.02,-.5)--(.98,-.5)--(2.25,-1)--(.75,-1);
\draw (3,0) ellipse (1 and 1.5); \draw node at (3,0) {$U_2^{(j)}$};
\draw (3.75,1)--(5.25,1)--(3.98,.5)--(5.02,.5)--(4,0)--(5,0)--(3.98,-.5)--(5.02,-.5)--(3.75,-1)--(5.25,-1);
\draw (5.25,1)--(3.75,1)--(5.02,.5)--(3.98,.5)--(5,0)--(4,0)--(5.02,-.5)--(3.98,-.5)--(5.25,-1)--(3.75,-1);
\draw node at (6.5,0) {$\dots$};
\draw (7.75,1)--(9.25,1)--(7.98,.5)--(9.02,.5)--(8,0)--(9,0)--(7.98,-.5)--(9.02,-.5)--(7.75,-1)--(9.25,-1);
\draw (9.25,1)--(7.75,1)--(9.02,.5)--(7.98,.5)--(9,0)--(8,0)--(9.02,-.5)--(7.98,-.5)--(9.25,-1)--(7.75,-1);
\draw (10,0) ellipse (1 and 1.5); \draw node at (10,0) {$U_{x_j-1}^{(j)}$};
\draw (10.75,1)--(13,0);\draw (10.98,.5)--(13,0);\draw (11,0)--(13,0);\draw (10.98,-.5)--(13,0);\draw (10.75,-1)--(13,0);
\draw[fill] (13,0) circle(0.15) node[above] {$U_{x_j}^{(j)}$};
\draw (15.25,1)--(13,0);\draw (15.02,.5)--(13,0);\draw (15,0)--(13,0);\draw (15.02,-.5)--(13,0);\draw (15.25,-1)--(13,0);
\draw (16,0) ellipse (1 and 1.5); \draw node at (16,0) {$U_{x_j+1}^{(j)}$};
\draw (16.75,1)--(18.25,1)--(16.98,.5)--(18.02,.5)--(17,0)--(18,0)--(16.98,-.5)--(18.02,-.5)--(16.75,-1)--(18.25,-1);
\draw (18.25,1)--(16.75,1)--(18.02,.5)--(16.98,.5)--(18,0)--(17,0)--(18.02,-.5)--(16.98,-.5)--(18.25,-1)--(16.75,-1);
\draw node at (19.5,0) {$\dots$};
\draw (20.75,1)--(22.25,1)--(20.98,.5)--(22.02,.5)--(21,0)--(22,0)--(20.98,-.5)--(22.02,-.5)--(20.75,-1)--(22.25,-1);
\draw (22.25,1)--(20.75,1)--(22.02,.5)--(20.98,.5)--(22,0)--(21,0)--(22.02,-.5)--(20.98,-.5)--(22.25,-1)--(20.75,-1);
\draw (23,0) ellipse (1 and 1.5); \draw node at (23,0) {$U_{l_j}^{(j)}$};

\draw [decorate,decoration={brace,amplitude=10, mirror},xshift=0,yshift=0] (-1,-1.5) -- (14,-1.5) node [black,midway,yshift=-15] {\scriptsize $\omega_j$ is a $c$-rainbow colouring of this subgraph};

\draw [decorate,decoration={brace,amplitude=10},xshift=0,yshift=0] (12, 1.5) -- (23,1.5) node [black,midway,yshift=15] {\scriptsize $\omega_j$ is a $c$-rainbow colouring of this subgraph};

\end{tikzpicture}
\caption{The structure of $H_j$}
\label{H_j-structure}
\end{figure}

Informally, we repeatedly perform moves to create a single monochromatic component at the boundary of the two rainbow-coloured segments (whose colour is consistent with both rainbow colourings), taking care to make sure that we do not play too many moves or shorten the path too much at any stage.  The key idea is to exploit the fact that some colour must occur more frequently, as we traverse the subgraph from left to right, than would happen under a rainbow colouring.

We describe in detail how to obtain the first pair $(H_1,\omega_1)$; the method for constructing further pairs is very similar but somewhat simpler.  Throughout, the only assumption we require, in addition to the fact that $(H_j,\omega_j)$ has the three stated properties, is that $\omega_j$ is not a rainbow colouring of $H_j$.

By construction of $\grd(G,\omega)$, we know that there is some $y \in \{t_i - c + 2, \ldots, t_i\}$ such that $f_0(y) = f_0(t_i+1)$: if not, then we would have chosen $G_i$ to include at least one more vertex class.  In fact, by our assumption that no two consecutive vertex classes receive the same colour under $\omega$, we know that $y \in \{t_i-c+2,\ldots,t_i-1\}$.  Since $\omega_0$ is a $C$-rainbow colouring of $G_i$, and $t_i > 2c$, we also know that $f_0(y-c) = f_0(y) = f_0(t_i+1)$.  Now set $F_0 = H_0[U_{y-c}^{(0)} \cup \cdots \cup U_{t_i+1}^{(0)}]$.  We now describe a sequence of moves to flood $F_0$ with colour $f(y)$ in at most $t_i + c - y - 1$ moves, all played at some vertex $v_0 \in U_y^{(0)}$; this sequence of moves is illustrated in Figure \ref{flood_F_0}.  We begin by giving $v_0$ colours $f_0(y-1),\ldots,f_0(y-c+1) = f(y+1)$ in turn; this will create a monochromatic component containing $v_0$ and all of $U_{y-c+1}^{(0)} \cup \cdots \cup U_{y-1}^{(0)} \cup U_{y+1}^{(0)}$, and so that the only vertices of $U_{y-c+1}^{(0)} \cup \cdots \cup U_{y+1}^{(0)}$ not linked to this component have colour $f_0(y)$.  We then give this component colours $f_0(y+2),\ldots,f_0(t_i+1) = f_0(y) = f_0(y-c)$ in turn, which will clearly flood all remaining vertices in $F_0$.  The total number of moves played is therefore $c-1 + t_i - y = t_i + c - y - 1$, implying that $m(F_0,\omega_0,f_0(y)) \leq t_i + c - y - 1$.

\begin{figure}[h]
\centering
\begin{tikzpicture}[scale=.4]
\draw (0,0) ellipse (2 and 1.5); \draw node at (0,0) {$f_0(y-c)$};
\draw (1.49,1)--(3.51,1)--(1.76,.5)--(3.24,.5)--(2,0)--(3,0)--(1.76,-.5)--(3.24,-.5)--(1.49,-1)--(3.51,-1);
\draw (3.51,1)--(1.49,1)--(3.24,.5)--(1.76,.5)--(3,0)--(2,0)--(3.24,-.5)--(1.76,-.5)--(3.51,-1)--(1.49,-1);
\draw (5,0) ellipse (2 and 1.5); \draw node at (5,0) {\scriptsize $f_0(y-c+1)$};
\draw (6.49,1)--(8.51,1)--(6.76,.5)--(8.24,.5)--(7,0)--(8,0)--(6.76,-.5)--(8.24,-.5)--(6.49,-1)--(8.51,-1);
\draw (8.51,1)--(6.49,1)--(8.24,.5)--(6.76,.5)--(8,0)--(7,0)--(8.24,-.5)--(6.76,-.5)--(8.51,-1)--(6.49,-1);
\draw node at (9,0) {$\dots$};
\draw (9.49,1)--(11.51,1)--(9.76,.5)--(11.24,.5)--(10,0)--(11,0)--(9.76,-.5)--(11.24,-.5)--(9.49,-1)--(11.51,-1);
\draw (11.51,1)--(9.49,1)--(11.24,.5)--(9.76,.5)--(11,0)--(10,0)--(11.24,-.5)--(9.76,-.5)--(11.51,-1)--(9.49,-1);
\draw (13,0) ellipse (2 and 1.5); \draw node at (13,0) {$f_0(y)$};
\draw (14.49,1)--(16.51,1)--(14.76,.5)--(16.24,.5)--(15,0)--(16,0)--(14.76,-.5)--(16.24,-.5)--(14.49,-1)--(16.51,-1);
\draw (16.51,1)--(14.49,1)--(16.24,.5)--(14.76,.5)--(16,0)--(15,0)--(16.24,-.5)--(14.76,-.5)--(16.51,-1)--(14.49,-1);
\draw (18,0) ellipse (2 and 1.5); \draw node at (18,0) {$f_0(y+1)$};
\draw (19.49,1)--(21.51,1)--(19.76,.5)--(21.24,.5)--(20,0)--(21,0)--(19.76,-.5)--(21.24,-.5)--(19.49,-1)--(21.51,-1);
\draw (21.51,1)--(19.49,1)--(21.24,.5)--(19.76,.5)--(21,0)--(20,0)--(21.24,-.5)--(19.76,-.5)--(21.51,-1)--(19.49,-1);
\draw node at (22,0) {$\dots$};
\draw (22.49,1)--(24.51,1)--(22.76,.5)--(24.24,.5)--(23,0)--(24,0)--(22.76,-.5)--(24.24,-.5)--(22.49,-1)--(24.51,-1);
\draw (24.51,1)--(22.49,1)--(24.24,.5)--(22.76,.5)--(24,0)--(23,0)--(24.24,-.5)--(22.76,-.5)--(24.51,-1)--(22.49,-1);
\draw (26,0) ellipse (2 and 1.5); \draw node at (26,0) {$f_0(t_i)$};
\draw (27.49,1)--(29.51,1)--(27.76,.5)--(29.24,.5)--(28,0)--(29,0)--(27.76,-.5)--(29.24,-.5)--(27.49,-1)--(29.51,-1);
\draw (29.51,1)--(27.49,1)--(29.24,.5)--(27.76,.5)--(29,0)--(28,0)--(29.24,-.5)--(27.76,-.5)--(29.51,-1)--(27.49,-1);
\draw (31,0) ellipse (2 and 1.5); \draw node at (31,0) {$f_0(t_i+1)$};

\draw [decorate,decoration={brace,amplitude=10, mirror},xshift=0,yshift=0] (2.5,-1.5) -- (20.5,-1.5) node [black,midway,yshift=-15] {\scriptsize $c-1$ moves};
\draw [decorate,decoration={brace,amplitude=10, mirror},xshift=0,yshift=0] (2.5,-2.75) -- (28.5,-2.75) node [black,midway,yshift=-15] {\scriptsize $t_i-y-1$ moves};
\draw [decorate,decoration={brace,amplitude=10, mirror},xshift=0,yshift=0] (-2.2,-4) -- (33.2,-4) node [black,midway,yshift=-15] {\scriptsize $1$ move};

\draw[dashed] (-2.2,1.7)--(-2.2,-4); \draw[dashed] (2.5,1.7)--(2.5,-2.75); \draw[dashed] (20.5,1.7)--(20.5,-1.5); \draw[dashed] (28.5,1.7)--(28.5,-2.75); \draw[dashed] (33.2,1.7)--(33.2,-4);
\end{tikzpicture}
\caption{Flooding the subgraph $F_0$}
\label{flood_F_0}
\end{figure}
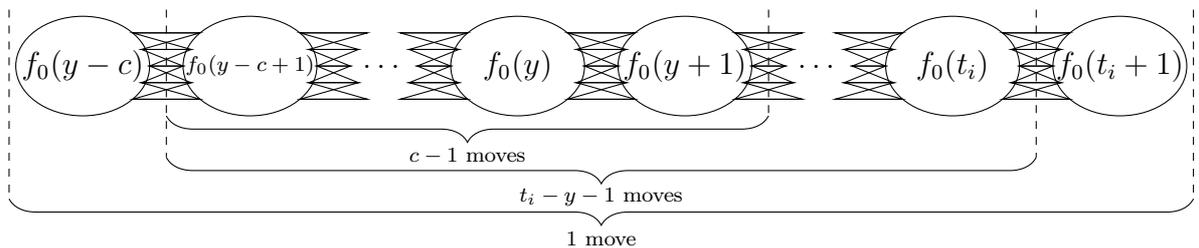

Now define $H_1$ to be the graph obtained from $H_0$ by contracting all vertices of $F_0$ to a single vertex $w_1$, and $\omega_1$ to be the colouring of $H_1$ that agrees with $\omega_0$ on all vertices of $H_1$ except $w_1$, and gives $w_1$ colour $f_0(y)$.  We claim that $H_1$ with colouring $\omega_1$ has the three properties listed above.  It is clear that $H_1$ is a blow-up of a path on 
$$\ell_1 = \ell_0 - (t_i+1 - y + c) \geq \ell_0 - 2c + 1 = \ell_0 - (1+1)(c-1) - 1$$
vertices and that $\omega_1$ is a proper path colouring of $H_1$, as required to satisfy the first condition.  For the second condition, set $x_1 = y-c$, and note that $U_{x_1}^{(1)} = \{w_1\}$.  Further define $f_1: \{1,\ldots,\ell_1\} \rightarrow C$ to be the function so that $\omega_1(u) = f_1(z)$ for every $u \in U_z^{(1)}$, for $1 \leq z \leq \ell_1$.  Observe that $f_1(z) = f_0(z)$ for $1 \leq z \leq y-c$, so it follows from the fact that $\omega_0$ is a $C$-rainbow colouring of $H_0[U_1^{(0)} \cup \cdots \cup U_{t_i}^{(0)}] \supset H_0[U_1^{(0)} \cup \cdots \cup U_{y-c}^{(0)}]$ that $\omega_1$ is a $C$-rainbow colouring of $H_1[U_1^{(1)} \cup \cdots \cup U_{x_1}^{(1)}]$.  Moreover, for $y-c \leq z \leq \ell_1$, we see that $f_1(z) = f_0(z + \ell_0 - \ell_1)$, so the fact that $\omega_0$ is a $C$-rainbow colouring of $H_0[U_{t_i+1}^{(0)} \cup \cdots \cup U_{\ell_0}^{(0)}]$ implies that $\omega_1$ is a $C$-rainbow colouring of 
$$H_1[U_{t_i+1-(\ell_0 - \ell_1)}^{(0)} \cup \cdots \cup U_{\ell_0 - (\ell_0 - \ell_1)}^{(0)}] = H_1[U_{x_1}^{(1)} \cup \cdots \cup U_{\ell_1}^{(1)}].$$
Thus the second condition holds.  Finally, for the third condition, it is clear from the definition of $\omega_1$ that $H_1$ with colouring $\omega_1$ is equivalent to the graph $H_0$ with colouring $\omega_1'$ (with $\omega_1'$ defined with respect to $\omega_1$ as in the statement of the third condition); note also that the only maximal monochromatic component of $H_0$ with respect to $\omega_1'$ that is not also a maximal monochromatic component with respect to $\omega_0$ is $F_0$ (and it is straightforward to verify that $F_0$ is indeed a maximal monochromatic component of $H_0$ with respect to $\omega_1'$).  Thus, if $\mathcal{A}_1$ denotes the set of maximal monochromatic components of $H_0$ with respect to $\omega_1'$ and each $A \in \mathcal{A}_1$ has colour $d_A$ under $\omega_1'$, we see that
\begin{align*}
\sum_{A \in \mathcal{A}_1} m(A,\omega_0,d_A) & = m(F_0,\omega_0,\omega_1(x_1)) \\
											   & = m(F_0,\omega_0,f_0(y)) \\
										  	   & \leq t_i + c - y - 1 \\
											   & = (t_i + 1 - y + c) - 2 \\
											   & = \ell_0 - \ell_1 - 2,
\end{align*} 
as required to satisfy the third condition.  This completes the definition of $H_1$ and $\omega_1$.

Observe that we can continue applying the same general procedure (omitting the part of the process that makes $F_0$ monochromatic) to obtain a new pair $(H_{j+1},\omega_{j+1})$ with the same three properties so long as $(H_j,\omega_j)$ has the three stated properties and $w_j$ is not a rainbow colouring of $H_j$.  Suppose that we construct a sequence of pairs $(H_j,\omega_j)$ for $1 \leq j \leq s$ in this way, where $s$ is as large as possible.  By maximality of $s$, we may assume that $\omega_s$ is a rainbow colouring of $H_s$, as otherwise we could continue.  (Note that if our colouring is not a rainbow colouring, this imposes a minimum condition on the length of the path, so we do not need to consider separately the possibility of our path becoming too short to apply the procedure.)  We define $\omega'$ to be the colouring of $G$ which agrees with $\omega$ on all vertices that do not belong to $H_0$, and with $\omega_s'$ on all vertices of $H_0$.  Note that the maximal monochromatic components of $G$ with respect to $\omega'$ that are \emph{not} also maximal monochromatic components with respect to $\omega$ are precisely the maximal monochromatic components of $H_0$ with respect to $\omega' = \omega_s'$ (and recall also that $\omega_0$ is the restriction of $\omega$ to $H_0$).  Thus we can apply Lemma \ref{change-colouring} to see that
\begin{align}
m(G,\omega) & \leq m(G,\omega') + \sum_{A \in \mathcal{A}_s} m(A,\omega_0,d_A^s) \nonumber \\
			  & \leq m(G,\omega') + \ell_0 - \ell_s - s - 1 \label{G,omega'}
\end{align}
by the third condition on $(H_s,\omega_s)$.  There are now two cases to consider, depending on the value of $s$.

First, suppose that $s \geq c(c-1)$.  In this case we argue that we can continue by creating a monochromatic end-to-end path and then cycling through any remaining colours.  Let $Q$ be a path in $G$ which contains precisely one vertex from each class.  Note that there will be a segment of $\ell_0 - \ell_s + 1$ consecutive vertices on $Q$ which have the same colour under $\omega'$ so, under this colouring, $Q$ is equivalent to a path of length $|Q| - \ell_0 + \ell_s$; Proposition \ref{dominating-path}, together with Corollary \ref{path-result}, therefore implies that 
$$m(G,\omega') \leq t - \ell_0 + \ell_s - \left \lceil \frac{t - \ell_0 + \ell_s}{c} \right \rceil + c - 1.$$
Substituting this in \eqref{G,omega'} and using both the fact that $\ell_s \geq \ell_0 - (s+1)(c-1) - 1$ (the first condition on $(H_s,\omega_s)$) and the assumption that $s \geq c(c-1)$, this gives
$$m(G,\omega) \leq t - \left \lceil \frac{t}{c} \right \rceil,$$
as required.

Now suppose instead that $s < c(c-1)$; in this case we invoke the inductive hypothesis.  Since $\omega_s$ is a $C$-rainbow colouring of $H_0$, it is clear that $|\grd(G,\omega')| < |\grd(G,\omega)|$.  Moreover, $G$ with colouring $\omega'$ is equivalent to a graph $\widetilde{G}$ with colouring $\widetilde{\omega}$, where $\widetilde{G}$ is a blow-up of a path on $r$ vertices with
\begin{align*}
r & = t - (\ell_0 - \ell_s) \\
  & > c(c-1)^2|\grd(G,\omega')|
\end{align*}
(making use of our assumption on the value of $t$ and the fact that $|\grd(G,\omega')| \leq |\grd(G,\omega)| - 1$).  Thus we can apply the inductive hypothesis to see that the claim holds for $G$ with colouring $\omega'$, implying that
$$m(G,\omega') \leq t - (\ell_0 - \ell_s) - \left \lceil \frac{t - (\ell_0 - \ell_s)}{c} \right \rceil.$$
Substituting into \eqref{G,omega'}, then gives the required result, completing the proof of the claim, and hence proving the result.
\end{proof}

\subsection{Arbitrary colourings}
\label{all-col}

In this section we show that our upper bound can be extended to all initial colourings.  The structure of this proof is in some ways similar to the previous result: we define a notion of the distance of a colouring from a path colouring, and then consider two cases.  If the colouring differs sufficiently from a path colouring, we can quickly create an end-to-end path and flood any remaining vertices greedily, whereas if our initial colouring is sufficiently close to a path colouring we demonstrate how to play a sequence of moves that results in a path-coloured graph, allowing us to apply the previous result.

\begin{lma}
Let $G$ be a blow-up of the path $P_t$, where $t \geq 2c^{10}$.  Then
$$M_c(G,\omega) \leq t - \left \lceil \frac{t}{c} \right \rceil.$$
\label{path-blowup-ub}
\end{lma}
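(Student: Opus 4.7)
The plan is to show that for any colouring $\omega$ of $G$ using at most $c$ colours, there exists a target colour $d^*$ and a flood sequence of length at most $t - \lceil t/c \rceil$ that makes $G$ monochromatic in $d^*$. Writing $\chi_i = \{\omega(v) : v \in V_i\}$ for the set of colours appearing in the $i$-th part, the starting point is the pigeonhole bound
$\sum_{d \in C} |\{i : d \in \chi_i\}| = \sum_{i=1}^{t} |\chi_i| \geq t$,
which yields a colour $d^*$ with $|\{i : d^* \in \chi_i\}| \geq \lceil t/c \rceil$. I call the parts containing $d^*$ \emph{rich} and the rest \emph{poor}; the number of poor parts is then at most $t - \lceil t/c \rceil$, which will be the target for the eventual move count.

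I would then construct the flood sequence by a left-to-right sweep through the parts $V_1, \ldots, V_t$, maintaining the invariant that after processing index $i$ some monochromatic $d^*$-component covers all of $V_1 \cup \cdots \cup V_i$. For a rich part $V_{i+1}$, a $d^*$-vertex in $V_{i+1}$ is automatically adjacent to every vertex of $V_i$ by the blow-up structure and is therefore swallowed into the growing $d^*$-component at no move cost; however the non-$d^*$ vertices of $V_{i+1}$ still have to be eventually absorbed. For a poor part $V_{i+1}$, at least one move is required to bring colour $d^*$ into $V_{i+1}$, and one tries to arrange this move so that it also absorbs as many non-$d^*$ vertices in adjacent parts as possible by exploiting the complete bipartite adjacency.

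The main obstacle is controlling the cost of absorbing the non-$d^*$ vertices sitting in rich parts. In pathological colourings the non-$d^*$ monochromatic components of $\omega$ in $G$ can be highly fragmented (for instance, with $c = 3$ and alternating colour-sets $\chi_i = \{1,2\}$ for odd $i$ and $\chi_i = \{3\}$ for even $i$), so that the naive one-move-per-component strategy overshoots the budget $t - \lceil t/c \rceil$. The remedy is a cycling step reminiscent of the proof of Theorem \ref{radius-bound}: instead of flipping a non-$d^*$ component directly to $d^*$, we first change it to another non-$d^*$ colour that appears in neighbouring parts, thereby merging scattered fragments into long amalgamated components that can be finished off by a single later move. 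The condition $t \geq 2c^{10}$ is what makes such merging consistently possible: viewing the sequence $\chi_1, \ldots, \chi_t$ as a word over the alphabet $2^{C} \setminus \{\emptyset\}$, pigeonhole applied inside windows of length polynomial in $c$ yields long stretches in which common local colour patterns recur, giving enough room to plan the cycling globally. I expect the body of the proof to be a careful case analysis based on the coarse structure of $\omega$, with the large-$t$ assumption used repeatedly to extract a regular substructure that admits an efficient flood sequence within exactly $t - \lceil t/c \rceil$ moves, and with Lemma \ref{change-colouring} likely invoked to replace $\omega$ by a more tractable auxiliary colouring on each regular block while controlling the overhead.
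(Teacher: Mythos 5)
Your pigeonhole step is sound, and a version of it does appear in the paper --- but only in the regime where it yields slack: the paper shows that if at least $c(c-1)$ vertex classes are non-monochromatic, then some colour meets at least $\left\lceil \frac{t}{c} \right\rceil + (c-1)$ classes, which buys enough room to build a cheap end-to-end monochromatic path and finish greedily with $c-1$ extra moves via Proposition \ref{dominating-path}. Your pigeonhole only guarantees $\left\lceil \frac{t}{c}\right\rceil$ rich classes, with zero slack, and that is precisely the regime where the statement is hard: by Lemma \ref{blowup-lb} the bound is attained, so any flooding strategy must be exactly optimal. Here your framework fails concretely. Take the rainbow path colouring ($V_i$ constant with colour $i \bmod c$) and any choice of $d^*$: the poor classes come in runs of $c-1$ distinct non-$d^*$ colours between consecutive rich classes, so a left-to-right sweep that restores colour $d^*$ after each run spends $c$ moves to advance $c$ classes, using roughly $t$ moves rather than $t - \left\lceil \frac{t}{c}\right\rceil$. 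The paper's optimal strategy for this case (Lemma \ref{rainbow-blowup}) necessarily plays at an \emph{interior} class and cycles colours so that individual moves absorb classes on both sides of the played component; no strategy maintaining your invariant (a $d^*$-coloured component covering the prefix $V_1\cup\cdots\cup V_i$) can realise these two-sided savings. (The invariant is also problematic already at $i=1$: a rich class is an independent set, so its $d^*$-vertices do not form a single component until some adjacent vertex acquires colour $d^*$.)

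The second, larger gap is that the portion of your write-up addressing the genuine difficulty --- absorbing the fragmented non-$d^*$ vertices without exceeding the budget --- is a statement of intent rather than an argument (``I expect the body of the proof to be a careful case analysis''). This is where essentially all of the paper's work lies: it first proves the exact bound for rainbow colourings by induction on $t$, with three base cases and a reduction step that floods $2c$ classes with $2c-2$ moves; it then handles arbitrary path colourings via a greedy decomposition into maximal rainbow-coloured segments, inducting on the number of segments by merging two adjacent segments at their boundary while tracking three quantitative invariants; and it finally reduces arbitrary colourings to path colourings by induction on the number $\theta(G,\omega)$ of non-monochromatic classes. None of this machinery is present or replaced in your proposal, so the proof is not complete, and its organising device (a single global target colour plus a left-to-right sweep) would need to be abandoned rather than refined.
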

\begin{proof}
let $\omega$ be any colouring of $G$ from colour-set $C = \{1,\ldots,c\}$.  We begin by setting 
$$\theta(G,\omega) = |\{i: 1 \leq i \leq t \text{ and } \omega \text{ is not constant on } V_i\}|.$$
Suppose first that $\theta(G,\omega) \geq c(c-1)$, and set
$$n_j = |\{i: 1 \leq i \leq t \text{ and } \exists u \in V_i \text{ with } \omega(u) = j \}.$$
Since there are $\theta(G,\omega)$ vertex classes that each contain vertices of at least two distinct colours, we see that 
$$\sum_{j = 1}^c n_j \geq t + \theta(G,\omega) \geq t + c(c-1).$$
Thus there exists some $j \in \{1,\ldots,c\}$ such that $n_j \geq \left \lceil \frac{t}{c} \right \rceil + (c-1)$.  Observe therefore that there exists a path $Q$ containing precisely one vertex from each vertex class $V_1, \ldots, V_t$ and so that at least $\left \lceil \frac{t}{c} \right \rceil + (c-1)$ vertices on $Q$ have colour $j$ under $\omega$.  It then follows from Proposition \ref{max-colour-class} that $m(Q,\omega) \leq t - \left \lceil \frac{t}{c} \right \rceil - (c-1)$, so Proposition \ref{dominating-path} gives
$$m(G,\omega) \leq t - \left \lceil \frac{t}{c} \right \rceil - (c-1) + (c-1) = t - \left \lceil \frac{t}{c} \right \rceil,$$
as required.

Thus from now on we will assume that $\theta(G,\omega) < c(c-1)$.  In this case it clearly suffices to prove the following claim, since we are assuming that $t \geq 2c^{10} > 2c^8(\theta(G,\omega) + 1)$.

\begin{claim*}
Suppose that $t > 2c^8(\theta(G,\omega) + 1)$.  Then
$$m(G,\omega) \leq t - \left \lceil \frac{t}{c} \right \rceil.$$
\label{theta-ind}
\end{claim*}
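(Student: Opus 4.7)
The plan is to induct on $\theta := \theta(G,\omega)$.  The base case $\theta = 0$ is immediate: then $\omega$ is constant on each vertex class, i.e.\ a path colouring of $G$, and since $t > 2c^8 \geq 2c^2(c-1)^3$ the bound follows directly from Lemma~\ref{path-colouring}.

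For the inductive step, assume $\theta \geq 1$ and that the claim holds for every coloured blow-up of a path whose non-monochromatic count is strictly smaller than $\theta$.  Fix a non-monochromatic class $V_i$ and two vertices $v, v' \in V_i$ of distinct colours.  I would aim to play a sequence of $k = 2c - 2$ moves entirely inside the sub-blow-up $R = G[V_{i-c} \cup \cdots \cup V_{i+c}]$ so as to absorb all of $R$ into a single monochromatic component.  Once this is done, contracting monochromatic components produces a new coloured graph equivalent to a blow-up $(\widetilde G, \widetilde \omega)$ of a path on $\widetilde t$ vertices with $\widetilde t \geq t - 2c - 2$ (allowing the flooded component to merge with one monochromatic class on either side) and $\theta(\widetilde G, \widetilde \omega) \leq \theta - 1$ (since $V_i$, and possibly other non-monochromatic classes inside $R$, have been absorbed into the single new class).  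The hypothesis $t > 2c^8(\theta+1)$ then yields $\widetilde t > 2c^8\theta \geq 2c^8(\theta(\widetilde G, \widetilde \omega)+1)$, so the inductive hypothesis applies to $(\widetilde G, \widetilde \omega)$ and gives $m(\widetilde G, \widetilde \omega) \leq \widetilde t - \lceil \widetilde t/c \rceil$.  Because $\ell - 1 = 2c$ is a multiple of $c$, a short ceiling calculation then confirms that $k = 2c - 2$ satisfies
\[
k \leq (t - \widetilde t) + \lceil \widetilde t/c\rceil - \lceil t/c\rceil,
\]
so that $m(G,\omega) \leq k + m(\widetilde G, \widetilde \omega) \leq t - \lceil t/c\rceil$, as required.

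The heart of the argument is therefore the construction of the $2c - 2$-move flooding sequence for $R$.  I would model this on the second base case and inductive step of Lemma~\ref{rainbow-blowup}: pick an anchor vertex in $V_i$ and cycle its growing monochromatic component through the $c$ colours in two carefully ordered sweeps.  Under a rainbow colouring of $R$ this is known to absorb every class in exactly $2c - 2$ moves, matching the bound from Theorem~\ref{colour-bound}; the crucial new point is that when $V_i$ contains two distinct colours, both of them are picked up during the two sweeps (via already-absorbed $V_{i\pm 1}$-neighbours), and no extra move is spent on the extra colour.  The main obstacle is to verify rigorously, by a case analysis along the lines of the base cases of Lemma~\ref{rainbow-blowup} (possibly after permuting the cycling order to accommodate the initial colour of the anchor), that the same strategy floods $R$ in at most $2c - 2$ moves for \emph{every} colouring in which $V_i$ is non-monochromatic.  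Once this local flooding lemma is established, the induction closes as sketched above.
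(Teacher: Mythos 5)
Your induction on $\theta(G,\omega)$ and your base case (reduce to Lemma~\ref{path-colouring}) match the paper, and your ceiling arithmetic for passing the bound through a contraction is fine. The gap is the step you yourself flag as ``the main obstacle'': the claim that for \emph{every} colouring in which $V_i$ is non-monochromatic, the window $R = G[V_{i-c} \cup \cdots \cup V_{i+c}]$ can be absorbed into a single monochromatic component with $2c-2$ moves. This is not a routine verification along the lines of Lemma~\ref{rainbow-blowup}; it is an unproven (and almost certainly false, in the generality you need) local lemma. First, you place no restriction on the other $2c$ classes of $R$: since you pick an arbitrary non-monochromatic $V_i$, the window may contain many non-monochromatic classes with adversarial colourings, and the paper's own discussion after Lemma~\ref{path-blowup-ub} shows that clustered non-monochromatic classes force strictly more moves than the ``ideal'' bound ($c+1$ moves for a blow-up of $P_c$, versus $c - \lceil c/c\rceil = c-1$). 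Second, even if every other class of $R$ were monochromatic, the induced path colouring on $2c+1$ classes is an arbitrary one, not a rainbow colouring; the paper only establishes the ideal bound for rainbow colourings (Lemma~\ref{rainbow-blowup}) or for path colourings of length at least $2c^2(c-1)^3 \gg 2c+1$ (Lemma~\ref{path-colouring}), and blow-ups are \emph{not} equivalent to paths under path colourings, so nothing already proved covers your window. The two-sweep strategy you describe genuinely gives $2c-2$ in the rainbow case, but ``permuting the cycling order'' does not obviously survive an arbitrary colour pattern, let alone several split classes.

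This is exactly the difficulty the paper's proof is engineered to avoid, which is why its argument is so much longer. Rather than working in a radius-$c$ window, the paper uses the hypothesis $t > 2c^8(\theta+1)$ and pigeonhole to find a non-monochromatic class $V_i$ with $2c^8$ consecutive \emph{monochromatic} classes on one side; it then spends $(c^2(c-1)+1)(2c^5-2c^4)$ moves linearising $c^2(c-1)+1$ blocks of that runway into genuine singleton classes (via the path-colouring bound applied block by block, with Lemma~\ref{change-colouring} controlling the cost), and finally runs a win--win argument: either all colours of $U_i$ appear among the next $c$ singletons, so $U_i$ plus $c$ classes are absorbed in $c-1$ moves and $\theta$ drops (the inductive case), or some colour is missing and an ``efficient flooding sequence'' of $c-2$ moves shortens the path by $c-1$, saving a move; after $c(c-1)$ such savings Proposition~\ref{dominating-path} finishes. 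The savings are amortised over a long stretch precisely because no cheap local absorption of a non-monochromatic class is available. To repair your proof you would need to either prove your local flooding lemma under hypotheses strong enough to be true (which would require choosing $V_i$ much more carefully and still handling non-rainbow surroundings), or adopt the paper's runway-and-amortisation strategy.
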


We prove the claim by induction on $\theta(G,\omega)$.  In the base case, for $\theta(G,\omega) = 0$, we know that $\omega$ must in fact be a path-colouring of $G$ and so the result follows immediately from Lemma \ref{path-colouring}.  Thus we may assume that $\theta(G,\omega) \geq 1$ and that the result holds for any graph $G'$ with colouring $\omega'$ such that $\theta(G',\omega') < \theta(G,\omega)$.

Since $t \geq 2c^8(\theta(G,\omega) + 1)$, there exists some vertex class $V_i$ such that $\omega$ is not constant on $V_i$, but for $1 \leq j \leq 2c^8$ we either have $\omega$ constant on every $V_{i+j}$, or else $\omega$ is constant on every $V_{i-j}$; reversing the order of the vertex classes if necessary, we may assume without loss of generality that we have $\omega$ constant on every $V_{i+j}$ for $1 \leq j \leq 2c^8$.  

The first step in our strategy to flood $G$ is to perform a series of moves in the \mbox{$(2c^5 + 1)(c^2(c-1) + 1)$} classes to the right of $V_i$, resulting in a colouring of these vertices that makes the subgraph they induce (after contracting monochromatic components) equivalent to a path.  Note that $\omega$ defines a path colouring on the subgraph induced by these classes.

We split this subgraph into $c^2(c-1)+1$ consecutive blocks, $B_1,\ldots,B_{c^2(c-1)+1}$, where each block consists of the subgraph induced by $2c^5+1$ consecutive vertex classes.  As the restriction of $\omega$ to any block $B_{\ell}$ is a path colouring, it follows from Lemma \ref{path-colouring} that $B_{\ell}$ can be made monochromatic in some colour $d_{\ell}$ with at most 
$$2c^5+1 - \left\lceil \frac{2c^5+1}{c} \right\rceil = 2c^5 - 2c^4$$
moves.  Let $\overline{\omega}$ be the colouring of $V(G)$ that assigns $d_{\ell}$ to every vertex of $B_{\ell}$ (for $1 \leq \ell \leq c^2(c-1) + 1$), and agrees with $\omega$ elsewhere.  Then Lemma \ref{change-colouring} tells us that 
\begin{equation}
m(G,\omega) \leq m(G,\overline{\omega}) + (c^2(c-1)+1)(2c^5 - 2c^4). \label{amended-target}
\end{equation}

We now consider the graph $G'$, obtained by contracting monochromatic components of $G$ with respect to $\overline{\omega}$, and the corresponding colouring $\omega'$.  Note that $G'$ is a blow-up of a path on $t' \leq t - 2c^5(c^2(c-1)+1)$ vertices.  In the remainder of the proof we will argue that in fact $m(G',\omega') \leq t' - \left \lceil \frac{t'}{c} \right \rceil$; it is straightforward to check that substituting this bound on $m(G,\overline{\omega})$ in \eqref{amended-target} gives the required result.

We now prove this bound on $m(G',\omega')$.  Recall from the construction of $G'$ that, if the vertex classes of $G'$ are $U_1,\ldots,U_{t'}$, we have $|U_{i + \ell}| = 1$ for $1 \leq \ell \leq c^2(c-1)+1$.  We may also assume without loss of generality that the colours assigned to vertices of $U_i$ by $\omega'$ are $\{1,\ldots,r\}$ for some $r \geq 2$.

If every colour in $\{1,\ldots,r\}$ is assigned to one of the first $c$ vertex classes to the right of $U_i$ by $\omega'$, then we can create a monochromatic component containing all of $U_i$ and the $c$ vertex classes to its right using at most $c-1$ moves: we give the unique vertex in $U_{i+1}$ the colours of $U_{i+2},U_{i+3},\ldots,U_{i+c}$ in turn.  Otherwise, there must be some $d \in \{1,\ldots,r\}$ that is not assigned to any of the first $c$ vertex-classes to the right of $U_i$ by $\omega'$.  Then, by Proposition \ref{max-colour-class}, we can perform an ``efficient flooding sequence'' in which we flood this subpath on $c$ vertices with at most $c-2$ moves (as some colour must be repeated), thus reducing the length of an end-to-end path by $c-1$.  We continue in this way until either we have performed an efficient flooding operation $c(c-1)$ times, or else we are able, by the method described above, to flood $U_i$ and the $c$ vertex classes immediately to the right with $c-1$ moves.

In the latter case, we have played $a(c-2)+(c-1)$ moves, for some $a <c(c-1)$, to create a coloured graph equivalent to a graph $\widetilde{G}$ with colouring $\widetilde{\omega}$, where $\widetilde{G}$ is a blow-up of a path on $t'' = t' - (a(c-1)+c)$ vertices and $\theta(\widetilde{G},\widetilde{\omega}) < \theta(G,\omega)$.\footnote{In fact the moves we have played might have flooded a larger component than is described here; but by Lemma \ref{change-colouring} this can only help us.}  It is straightforward to verify that $t'' \geq 2c^8(\theta(\widetilde{G},\widetilde{\omega})+1)$ and so it follows from the inductive hypothesis that 
$$m(\widetilde{G},\widetilde{\omega}) \leq t'' - \left \lceil \frac{t''}{c} \right \rceil = t' - (a(c-1)+c) - \left \lceil \frac{t' - (a(c-1)+c)}{c} \right \rceil;$$
the required bound on $m(G',\omega')$ then follows easily from the fact that $m(G',\omega') \leq a(c-2) + (c-1) + m(\widetilde{G},\widetilde{\omega})$.

It remains to consider the case that we terminate after performing $c(c-1)$ efficient flooding sequences.  In this case, we have (after contracting monochromatic components) reduced the length of an end-to-end path by $c(c-1)^2$, with at most $c(c-1)(c-2)$ moves.  Thus, by Proposition \ref{dominating-path} we can flood the resulting graph with a further 
$$t'-c(c-1)^2 - \left\lceil \frac{t' - c(c-1)^2}{c}\right\rceil + c - 1 = t' - \left\lceil \frac{t'}{c} \right\rceil - c(c-1)(c-2)$$
moves, from which the bound on $m(G',\omega')$ follows immediately, completing the proof.
\end{proof}

%%%%%%%%%% SECTION 6: CONCLUSION AND OPEN PROBLEMS %%%%%%%%%%
\section{Conclusions and Open Problems}

We have given several upper and lower bounds on the maximum number of moves, taken over all possible colourings, that may be required to flood a given graph $G$ in both the fixed and free variants of the game, and we have demonstrated that these bounds are tight for suitable families of trees.

Motivated by the intuition that adding a large number of edges to the graph should reduce the number of moves required in the worst case, we also demonstrate that the number of moves required in the worst case (in the free version) to flood a blow-up of a sufficiently long path is the same as the number required to flood a path of the same length.  If we add all possible edges to a tree of radius $r$ that do not reduce the length of a shortest path between any vertex $u$ and the vertex $v$ of minimum eccentricity, the resulting graph contains a blow-up of a path of length $r+1$ as a subgraph, so our result on path blow-ups demonstrates that adding edges in this way gives a dramatic reduction in the worst-case number of moves required to flood the graph.  It would be interesting to investigate precisely how many edges must be added to a tree of radius $r$ to decrease the worst-case number of move required.

Our results provide a partial answer to a question raised by Meeks and Scott in \cite{2xn}, that of determining the maximum number of moves, taken over all possible colourings, that may be required to flood a given graph $G$. This general question remains open, and given the emphasis on $k \times n$ grids in the existing algorithmic analysis of flood-filling games, a natural first direction for further research would be that of determining the exact value of $M_c(G)$ and $M_c^{(v)}(G)$ in the case that $G$ is a $k \times n$ grid. Using Proposition~\ref{max-colour-class} and Lemma~\ref{rainbow-target}, we only have $n - \left \lceil \frac{n}{c} \right \rceil$ and $n - \left \lceil \frac{n}{c} \right \rceil + (c-1) \left \lceil \frac{k-1}{2} \right \rceil$ as lower and upper bounds in the free version, respectively; the bounds for the fixed version are even worse.

The parameterised complexity of determining whether a given coloured graph can be flooded with a specified number of moves has been studied with a wide range of different parameterisations (as in, for example, \cite{fellows-flood15}), but this investigation of extremal properties gives rise to a new natural parameterised problem: given a graph $G$, for which we know the value of $M_c(G)$, and a colouring $\omega$ of $G$, what is the (parameterised) complexity of determining whether $G$ with initial colouring $\omega$ can be flooded in at most $M_c(G) - k$ moves, where $k$ is the parameter?  It can easily be deduced from the hardness proof for $2 \times n$ boards in \cite{2xn} that determining the minimum number of moves required to flood a blow-up of a path is NP-hard, so it is already meaningful to consider this parameterised problem for graphs drawn from the classes considered here.

%\nocite{*}
\bibliographystyle{abbrvnat}
% use the following instead if you encounter problems 
%\bibliographystyle{alpha}
\bibliography{../FIrefs}

\end{document}